  \renewcommand{\equiv}{\simeq}
  \newcommand{\op}{{\mathord\mathrm{op}}}
  \newcommand{\ob}{\operatorname{ob}}
  \newcommand{\slice}{\mathbin\downarrow}
  \def\horn#1{\expandafter\horn@i#1,,\@nil}
  \def\horn@i#1,#2,#3\@nil{\Lambda^{#2}[#1]}
  \renewcommand{\tilde}{\widetilde}
  \renewcommand{\hat}{\widehat}
  \newcommand{\cat}[1]{\mathscr{#1}}
  \newcommand{\from}{\colon}
  \declaretheorem[style=definition,within=section]{definition}
  \declaretheorem[style=definition,numberlike=definition]{notation}
  \declaretheorem[style=definition,numberlike=definition]{example}
  \declaretheorem[style=definition,numberlike=definition]{remark}
  \declaretheorem[style=definition,numberlike=definition]{construction}
  \declaretheorem[style=plain,numberlike=definition]{corollary}
  \declaretheorem[style=plain,numberlike=definition]{lemma}
  \declaretheorem[style=plain,numberlike=definition]{proposition}
  \declaretheorem[style=plain,numberlike=definition]{theorem}
  \declaretheorem[style=plain,numbered=no,name=Theorem]{theorem*}
  \Crefname{corollary}{Corollary}{Corollaries}
  \Crefname{definition}{Definition}{Definitions}
  \Crefname{notation}{Notation}{Notations}
  \Crefname{lemma}{Lemma}{Lemmas}
  \Crefname{proposition}{Proposition}{Propositions}
  \Crefname{remark}{Remark}{Remarks}
  \Crefname{theorem}{Theorem}{Theorems}
  \Crefname{construction}{Construction}{Constructions}
  \newlist{axioms}{enumerate}{1}
  \Crefname{axiomsi}{}{}
  \newenvironment{tikzeq*}
  {
    \begingroup
    \begin{equation*}
    \begin{tikzpicture}[baseline=(current bounding box.center)]
  }
  {
    \end{tikzpicture}
    \end{equation*}
    \endgroup
    \ignorespacesafterend
  }
  \tikzset
  {
    diagram/.style=
    {
      matrix of math nodes,
      column sep={4.3em,between origins},
      row sep={4em,between origins},
      text height=1.5ex,
      text depth=.25ex
    },
    over/.style={preaction={draw=white,-,line width=6pt}},
    every to/.style={font=\footnotesize},
    inj/.style={right hook->},
    surj/.style={-{Latex[open]}},
    cof/.style={>->},
    fib/.style={->>},
  }
  \DeclareFontFamily{U}{mathx}{\hyphenchar\font45}
  \DeclareFontShape{U}{mathx}{m}{n}{
    <5> <6> <7> <8> <9> <10>
    <10.95> <12> <14.4> <17.28> <20.74> <24.88>
    mathx10}{}
  \DeclareSymbolFont{mathx}{U}{mathx}{m}{n}
  \DeclareFontFamily{U}{mathb}{\hyphenchar\font45}
  \DeclareFontShape{U}{mathb}{m}{n}{
    <5> <6> <7> <8> <9> <10>
    <10.95> <12> <14.4> <17.28> <20.74> <24.88>
    mathb10}{}
  \DeclareSymbolFont{mathb}{U}{mathb}{m}{n}
  \DeclareMathAccent{\widebar}{0}{mathx}{"73}
  \DeclareMathSymbol{\Rsh}{\mathrel}{mathb}{"E9}
  \DeclareFontFamily{U}{MnSymbolA}{}
  \DeclareFontShape{U}{MnSymbolA}{m}{n}{
    <-6> MnSymbolA5
    <6-7> MnSymbolA6
    <7-8> MnSymbolA7
    <8-9> MnSymbolA8
    <9-10> MnSymbolA9
    <10-12> MnSymbolA10
    <12-> MnSymbolA12}{}
  \DeclareSymbolFont{MnSyA}{U}{MnSymbolA}{m}{n}
  \DeclareMathSymbol{\twoheaddownarrow}{\mathrel}{MnSyA}{27}
  \newcommand{\MSC}[1]{%
    \let\thempfn\relax
    \footnotetext[0]{2020 Mathematics Subject Classification: #1.}
  }
\newcommand{\Cat}{\mathsf{Cat}}
\newcommand{\Gpd}{\mathsf{Gpd}}
\newcommand{\Grp}{\mathsf{Grp}}
\newcommand{\Graph}{\mathsf{Graph}}
\newcommand{\Set}{\mathsf{Set}}
\newcommand{\Cov}{\mathsf{Cov}}
\newcommand{\V}[1]{{#1}_{V}} 
\newcommand{\E}[1]{{#1}_{E}} 
\newcommand{\gtimes}{\otimes} 
\newcommand{\gexp}[2]{\ensuremath{#1}^{\gtimes #2}} 
\newcommand{\ghom}[2]{\operatorname{hom}^{\gtimes}(#1, #2)} 
\newcommand{\gbox}{\sqsubset}
\author{Krzysztof Kapulkin \and Udit Mavinkurve}
\title{The fundamental group in discrete homotopy theory}
\date{\today}
\begin{document}

  \maketitle

  \begin{abstract}
    We develop a robust foundation for studying the fundamental group(oid) in A-homotopy theory, including: equivalent definitions and basic properties, the theory of covering graphs, and the discrete version of the Seifert--van Kampen theorem.
     \MSC{05C38 (primary), 55Q99, 05C63 (secondary)}
  \end{abstract}

  \setlist[enumerate]{label=(\arabic*)}

\section*{Introduction}

Discrete homotopy theory is an emerging area of mathematics concerned with using homotopy-theoretic techniques to study combinatorial properties of (simple) graphs.
It does so by introducing a new combinatorial notion of homotopy between graph maps and subsequently reinterpreting the usual homotopy-theoretic invariants, such as homotopy or homology groups, through the lenses of this new notion.

Several approaches to, or models of, discrete homotopy theory exist in the literature, with the most prominent ones being A-homotopy theory and $\times$-homotopy theory \cite{dochtermann:hom-complex-and-homotopy}.
Here we work with the former.
A-homotopy theory was established in the foundational work of Barcelo and collaborators \cite{barcelo-kramer-laubenbacher-weaver,babson-barcelo-longueville-laubenbacher}.
Many of its ideas, however, can be traced back to the work of Atkin \cite{atkin:i,atkin:ii} (after whom the field is named), later made mathematically rigorous by Kramer and Laubenbacher \cite{kramer-laubenbacher}, on the homotopy theory of simplicial complexes for network analysis, and of Maurer \cite{maurer:matroid-basis-i,maurer:matroid-basis-ii} on basis (exchange) graphs of matroids.
Thus from its conception, A-homotopy theory was known to have a broad range of applications, both within and outside mathematics, and we refer the reader to \cite{barcelo-laubenbacher} for an excellent survey thereof, including further connections to hyperplane arrangements and time series analysis.
The $\times$-homotopy theory referenced above is on the other hand closely related to graph colorings.
As $n$-colorings of a graph $G$ can be understood as graph homomorphisms $G \to K_n$, where $K_n$ is a complete graph on $n$ vertices, studying the space of such graph homomorphisms, known as the \emph{Hom complex}, allows one to import cohomological techniques to disprove existence of certain maps \cite{lovasz,babson-kozlov,dochtermann:hom-complex-and-homotopy,dochtermann:homotopy-groups}.

More recently, A-homotopy theory has seen significant development, driven partly by new applications in Topological Data Analysis \cite{memoli-zhou} and partly by the introduction of new techniques from abstract homotopy theory \cite{carranza-kapulkin:cubical-graphs}.
The two go hand in hand: new applications require effective computational tools which can be developed using these new techniques, e.g., the long exact sequence of a fibration is constructed and the Hurewicz theorem is proven in \cite{carranza-kapulkin:cubical-graphs}.

In the present paper we take the natural next step towards building a robust foundation of computations within A-homotopy theory by focusing on the homotopy $1$-type of a graph.
More precisely, we study the fundamental group and, more generally, the fundamental groupoid of a graph, developing a number of important computational tools.
We now survey the contributions of this paper and their relation to the existing work in the field.

After collecting the background information on A-homotopy theory in \cref{sec:background}, we compare different definitions of homotopy groups in \cref{sec:fin-fomrulation}.
Historically, e.g., in \cite{babson-barcelo-longueville-laubenbacher,barcelo-laubenbacher}, homotopy groups of a graph are defined by taking maps out of an infinite grid graph and requiring that these maps are constant outside of a fixed finite region.
Alternatively, one could define homotopy groups using maps out of finite grids, without stipulating any additional conditions.
Each of these approaches has its own set of advantages and disadvantages, although we found that in our work, the `finite grid' approach is generally preferable.

What is not immediately clear in this approach is what identifications need to be made when defining maps on grids of different sizes.
To describe them, we adapt the notion of reparametrization by a shrinking map, originally used in \cite{grigor'yan-lin-muranov-yau:homotopy}, to the setting of undirected graphs; see \cref{def:shrinking} below.
The equivalence of the two approaches is then established in \cref{equivalence-finite-infinite}.

We then in \cref{sec:fund-gpd} define the fundamental groupoid $\Pi_1 X$ of a graph $X$, which is a multi-object generalization of the fundamental group.
We prove in particular that $\Pi_1$ is a monoidal functor (\cref{thm:pi1_monoidal}) and use it to deduce that that the category of graphs is enriched over groupoids (\cref{graph-gpd-enriched}).
In the combinatorial context, the fundamental groupoid was previously studied by Chih and Scull \cite{chih-scull:fund-gpd} for Dochtermann's $\times$-homotopy theory \cite{dochtermann:hom-complex-and-homotopy} in the category of non-reflexive graphs.
The two notions are however incompatible, as they are based on different graph products.
Thus, although minor similarities between the proof techniques can be noted, our results neither formally imply those of \cite{chih-scull:fund-gpd} nor vice versa.
In general however the proof techniques of our work are more categorical, whereas those of \cite{chih-scull:fund-gpd} are more direct.

Following that, in \cref{sec:covering}, we develop the theory of covering graphs, mirroring the familiar theory of covering spaces.
In A-homotopy theory, covering graphs were previously studied by Barcelo, Greene, Jarrah, and Welker \cite{barcelo-greene-jarrah-welker:vanishing}, and also Hardeman \cite{hardeman:lifting}, although the definition used there more closely resembles that of a local isomorphism, which we strengthen by imposing an additional lifting condition.
Using this refined definition, we are able to prove the existence of the universal cover for all pointed graphs (\cref{univ-cov-exists}) and establish a classification of covering graphs via functors on the fundamental group(oid) of the graph (\cref{thm:covers-as-functors}).
The notions of local isomorphism and covering graph agree for graphs without 3- and 4-cycles, and hence our results generalize those of, e.g.,  \cite[Lem.~4.3.(3)]{barcelo-greene-jarrah-welker:vanishing} and \cite[Thm.~3.0.10]{hardeman:lifting}.
On the other hand, the extension to graphs with 3- and 4-cycles is an important one, since graphs without 3- or 4-cycles must have trivial higher homotopy groups (see ~\cite[Thm.~1.1]{lutz}).
In $\times$-homotopy theory, covering graphs were studied by Chih and Scull \cite{chih-scull:covers}, although, just as in the case of the fundamental groupoid, the two notions are quite dissimilar, and the proof techniques are very different, with ours being more categorical, and those of \cite{chih-scull:covers} being more combinatorial.

Finally, in \cref{sec:kampen}, we prove the discrete Seifert--van Kampen theorem on the preservation of pushouts by the fundamental group functor (\cref{thm:van-kampen-groups}), which we deduce from a stronger version for the fundamental groupoid (\cref{thm:van-kampen-refined}).
A version of this theorem was previously proven in the context of A-homotopy theory in \cite{barcelo-kramer-laubenbacher-weaver}, however under an additional hypothesis on the pushout square to be preserved.
We strengthen the theorem presented therein by relaxing the hypothesis to one that is often found to be satisfied in examples of interest.
In particular, in \cref{ex:S2-simply-connected}, we give a graph whose fundamental group could be computed using our version of the Seifert--van Kampen theorem, but not the one found in \cite{barcelo-kramer-laubenbacher-weaver}.

As an application of our framework, in \cref{sec:application}, we present a purely combinatorial construction of a graph with a prescribed fundamental group, given the group's presentation.

\textbf{Acknowledgement.} This material is based upon work supported by the National Science Foundation under Grant No.~DMS-1928930 while K.K.~participated in a program hosted by the Mathematical Sciences Research Institute in Berkeley, California, during the Spring 2023 semester.

\section{Preliminaries} \label{sec:background}

In this opening section, we review the necessary background on A-homotopy theory.

\subsection{The category of graphs}

We begin by recalling the definition of a graph and a graph map.

\begin{definition}
      A \emph{graph} $X = \left( \V{X},\E{X} \right)$ consists of a set $\V{X}$ of vertices together with a set $\E{X}$ of edges, where an edge $e \in \E{G}$ is a cardinality $2$ subset of $\V{X}$.
\end{definition}

That is, the graphs in this paper are simple and undirected.
If $e = \left\{x,x'\right\} \in \E{X}$ is an edge in the graph $X$, then we say that the vertices $x$ and $x'$ are \emph{adjacent} and write $x \sim x'$.

\begin{definition}
      A \emph{graph map} $f \from X \to Y$ is a function $f \from \V{X} \to \V{Y}$ on the underlying vertex sets satisfying the following property: if $x \sim x'$ is an edge in $X$, then either $f\left(x\right) = f\left(x'\right)$ or $f\left(x\right) \sim f\left(x'\right)$ is an edge in $X$.
\end{definition}
    
That is, given an edge in the domain graph $X$, $f$ either contracts it to a single vertex in the codomain graph $Y$ or maps to it to an edge in the codomain graph $Y$.
This might be surprising at first, but it correctly captures the idea of working with reflexive graphs.
Indeed, by postulating that our graphs are reflexive, our definition of a graph map recovers the familiar definition of a graph homomorphism.
Note however that the above definition differs from that of a graph homomorphism (a function between sets of vertices taking edges to edges) if the graphs are not reflexive.
This is indeed one of the key differences between A-homotopy theory and $\times$-homotopy theory.

We write $\Graph$ for the the category of graphs and graph maps.

\begin{example} \leavevmode
\begin{enumerate}
    \item The $n$-interval $I_{n}$, for $n \in \mathbb{N}$.
    \begin{itemize}
        \item vertices: integers $0, 1, \ldots, n$
        \item edges: $i \sim i+1$ for $i = 0, \ldots, n-1$.
    \end{itemize}
    
    \begin{figure}[H]
    \centering
    \begin{minipage}{0.09\textwidth}
    \centering
    \begin{tikzpicture}[colorstyle/.style={circle, draw=black, fill=black, thick, inner sep=0pt, minimum size=1 mm, outer sep=0pt},scale=1.5]
        \node (0) at (0,0) [colorstyle, label = above: {$0$}] {};
        \node at (0,-0.5) [anchor = south]{$I_{0}$};
    \end{tikzpicture}
    \end{minipage}
    \begin{minipage}{0.19\textwidth}
    \centering
    \begin{tikzpicture}[colorstyle/.style={circle, draw=black, fill=black, thick, inner sep=0pt, minimum size=1 mm, outer sep=0pt},scale=1.5]
        \node (0) at (0,0) [colorstyle, label = above: {$0$}] {};
        \node (1) at (1,0) [colorstyle, label = above: {$1$}] {};
        \draw [thick] (0) -- (1);
        \node at (0.5,-0.5) [anchor = south]{$I_{1}$};
    \end{tikzpicture}
    \end{minipage}
    \begin{minipage}{0.29\textwidth}
    \centering
    \begin{tikzpicture}[colorstyle/.style={circle, draw=black, fill=black, thick, inner sep=0pt, minimum size=1 mm, outer sep=0pt},scale=1.5]
        \node (0) at (0,0) [colorstyle, label = above: {$0$}] {};
        \node (1) at (1,0) [colorstyle, label = above: {$1$}] {};
        \node (2) at (2,0) [colorstyle, label = above: {$2$}] {};
        \draw [thick] (0) -- (2);
        \node at (1,-0.5) [anchor = south]{$I_{2}$};
    \end{tikzpicture}
    \end{minipage}
    \begin{minipage}{0.39\textwidth}
    \centering
    \begin{tikzpicture}[colorstyle/.style={circle, draw=black, fill=black, thick, inner sep=0pt, minimum size=1 mm, outer sep=0pt},scale=1.5]
        \node (0) at (0,0) [colorstyle, label = above: {$0$}] {};
        \node (1) at (1,0) [colorstyle, label = above: {$1$}] {};
        \node (2) at (2,0) [colorstyle, label = above: {$2$}] {};
        \node (3) at (3,0) [colorstyle, label = above: {$3$}] {};
        \draw [thick] (0) -- (3);
        \node at (1.5,-0.5) [anchor = south]{$I_{3}$};
    \end{tikzpicture}
    \end{minipage}
    \end{figure}
    
    \item The infinite interval $I_{\infty}$.
    \begin{itemize}
        \item vertices: all integers $i \in \mathbb{Z}$
        \item edges: $i \sim i+1$ for all $i \in \mathbb{Z}$
    \end{itemize}
    \begin{figure}[H]
    \centering
    \begin{minipage}{0.9\textwidth}
    \centering
    \begin{tikzpicture}[colorstyle/.style={circle, draw=black, fill=black, thick, inner sep=0pt, minimum size=1 mm, outer sep=0pt},scale=1.5]
       \node (-etc) at (0,0) {};
       \node (-1) at (1,0) [colorstyle, label = above: {$-1$}] {};
       \node (0) at (2,0) [colorstyle, label = above: {$0$}] {};
       \node (1) at (3,0) [colorstyle, label = above: {$1$}] {};
       \node (2) at (4,0) [colorstyle, label = above: {$2$}] {};
       \node (i) at (5,0) [colorstyle, label = above: {$i$}] {};
       \node (i+1) at (6,0) [colorstyle, label = above: {$i+1$}] {};
       \node (etc) at (7,0) {};
       \draw [loosely dotted] (-etc) -- (-1);
       \draw [thick] (-1) -- (0);
       \draw [thick] (0) -- (1);
       \draw [thick] (1) -- (2);
       \draw [loosely dotted] (2) -- (i);
       \draw [thick] (i) -- (i+1);
       \draw [loosely dotted] (i+1) -- (etc);
       \node at (3.5,-0.5) [anchor = south]{$I_{\infty}$};
   \end{tikzpicture}
   \end{minipage}
   \end{figure}
    \item The $n$-cycle $C_{n}$ for $n \geq 3$.
    \begin{itemize}
        \item[$\bullet$] vertices: integers modulo n, that is $0, \ldots, n-1 \in \mathbb{Z}/n\mathbb{Z}$
        \item[$\bullet$] edges: $i \sim i+1$ for all $i \in \mathbb{Z}/n\mathbb{Z}$
    \end{itemize}
    \begin{figure}[H]
    \centering
    \begin{minipage}{0.3\textwidth}
    \centering
    \begin{tikzpicture}[colorstyle/.style={circle, draw=black, fill=black, thick, inner sep=0pt, minimum size=1 mm, outer sep=0pt},scale=1.5]
        \node (0) at (0.5,0.866) [colorstyle, label = above: {$0$}] {};
        \node (1) at (1,0) [colorstyle, label = below right: {$1$}] {};
        \node (2) at (0,0) [colorstyle, label = below left: {$2$}] {};
        \draw [thick] (0) -- (1);
        \draw [thick] (1) -- (2);
        \draw [thick] (2) -- (0);
        \node at (0.5,-0.75) [anchor = south]{$C_{3}$};
    \end{tikzpicture}
    \end{minipage}
    \begin{minipage}{0.3\textwidth}
    \centering
    \begin{tikzpicture}[colorstyle/.style={circle, draw=black, fill=black, thick, inner sep=0pt, minimum size=1 mm, outer sep=0pt},scale=1.5]
        \node (0) at (0,1) [colorstyle, label = above left: {$0$}] {};
        \node (1) at (1,1) [colorstyle, label = above right: {$1$}] {};
        \node (2) at (1,0) [colorstyle, label = below right: {$2$}] {};
        \node (3) at (0,0) [colorstyle, label = below left: {$3$}] {};
        \draw [thick] (0) -- (1);
        \draw [thick] (1) -- (2);
        \draw [thick] (2) -- (3);
        \draw [thick] (3) -- (0);
        \node at (0.5,-0.75) [anchor = south]{$C_{4}$};
    \end{tikzpicture}
    \end{minipage}
    \begin{minipage}{0.3\textwidth}
    \centering
    \begin{tikzpicture}[colorstyle/.style={circle, draw=black, fill=black, thick, inner sep=0pt, minimum size=1 mm, outer sep=0pt},scale=1.5]
        \node (0) at (0.809,1.539) [colorstyle, label = above : {$1$}] {};
        \node (1) at (1.618,0.951) [colorstyle, label = right: {$2$}] {};
        \node (2) at (1.309,0) [colorstyle, label = below right: {$3$}] {};
        \node (3) at (0.309,0) [colorstyle, label = below left: {$4$}] {};
        \node (4) at (0,0.951) [colorstyle, label = left: {$0$}] {};
        \draw [thick] (0) -- (1);
        \draw [thick] (1) -- (2);
        \draw [thick] (2) -- (3);
        \draw [thick] (3) -- (4);
        \draw [thick] (4) -- (0);
        \node at (0.809,-0.75) [anchor = south]{$C_{5}$};
    \end{tikzpicture}
    \end{minipage}
    \end{figure}
\end{enumerate}
\end{example}

\begin{proposition}
    The functor $\V{(-)} \from \Graph \to \Set$ that maps a graph to its underlying vertex set admits both adjoints.
    \[
    \begin{tikzcd}
        \Graph \arrow[rr,"\V{(-)}" description] && \arrow[ll, bend right,"\bot","\mathsf{discrete}"'] \arrow[ll, bend left,"\bot"',"\mathsf{complete}"] \Set
    \end{tikzcd}
    \]
    The left adjoint to $\V{(-)}$ maps a set $X$ to the discrete graph on $X$, given by the pair $\left(X,\emptyset\right)$, while the right adjoint to $\V{(-)}$ maps a set $X$ to the complete graph on $X$, given by the pair $\left(X,\left\{ x \sim x' \ \middle\vert \ x, x' \in X, x \neq x'\right\}\right)$.
    In particular, $\V{(-)}$ preserves limits and colimits.
    \qed
\end{proposition}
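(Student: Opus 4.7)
The plan is to directly exhibit the two adjunctions by producing natural bijections of hom-sets, after which the preservation statement is immediate from abstract nonsense.

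For the left adjoint, I would verify that for any set $X$ and any graph $Y$, the map
\[
\Graph\bigl((X,\emptyset),\,Y\bigr) \longrightarrow \Set\bigl(X,\,\V{Y}\bigr)
\]
sending a graph map to its underlying vertex function is a bijection. Surjectivity is the key point: any function $f\colon X \to \V{Y}$ qualifies as a graph map out of $(X,\emptyset)$ because the edge-preservation condition is vacuous in the absence of edges in the domain. Injectivity and naturality in both variables are trivial, since a graph map on a discrete graph is literally the same data as a function on vertices.

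For the right adjoint, I would analogously check that for any graph $Y$ and any set $X$, every function $g\colon \V{Y}\to X$ extends uniquely to a graph map $Y \to (X,\{x\sim x' \mid x\neq x'\})$. Given an edge $y \sim y'$ in $Y$, one has either $g(y)=g(y')$ or $g(y)\neq g(y')$; in the latter case $g(y)\sim g(y')$ holds in the complete graph by definition. Thus the edge-preservation condition is automatic, and the forgetful map
\[
\Graph\bigl(Y,\,(X,\{x\sim x' \mid x \neq x'\})\bigr) \longrightarrow \Set\bigl(\V{Y},\,X\bigr)
\]
is a natural bijection. Naturality in $Y$ and $X$ is again formal.

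The ``in particular'' clause then follows from the general fact that left adjoints preserve colimits and right adjoints preserve limits: since $\V{(-)}$ is simultaneously a left adjoint (to $\mathsf{complete}$) and a right adjoint (to $\mathsf{discrete}$), it preserves both. There is no real obstacle here; the only place where one might slip is in remembering that a graph map is allowed to collapse edges as well as send them to edges, which is precisely what makes the complete graph the cofree choice on vertices.
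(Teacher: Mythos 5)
Your proposal is correct: the paper states this proposition without proof (it is marked as immediate), and your direct verification of the two hom-set bijections — edge-preservation being vacuous out of a discrete graph and automatic into a complete graph — together with the standard fact that left/right adjoints preserve colimits/limits is exactly the routine argument the paper is implicitly relying on. No gaps.
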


\begin{proposition}\label{bicomplete}
    The category $\Graph$ is (co)complete.
    \qed
\end{proposition}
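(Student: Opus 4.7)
Our plan is to construct limits and colimits in $\Graph$ explicitly, using the fact established in the previous proposition that $\V{(-)}$ preserves limits and colimits. In both cases the vertex set is dictated by the corresponding (co)limit in $\Set$, and the only real content is the choice of edges.

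For limits, given a diagram $D \from \cat{J} \to \Graph$, we would let $L$ be the graph with vertex set $\V{L} \coloneq \lim_{j \in \cat{J}} \V{D(j)}$, with projections $\pi_j \from \V{L} \to \V{D(j)}$, and declare a two-element subset $\{x, x'\} \subseteq \V{L}$ to be an edge of $L$ iff for every $j \in \ob \cat{J}$ one has either $\pi_j(x) = \pi_j(x')$ or $\{\pi_j(x), \pi_j(x')\} \in \E{D(j)}$. Each $\pi_j$ is then a graph map by construction, and the family $(\pi_j)$ is a cone since it is one on vertices. For the universal property, any compatible cone $(f_j \from X \to D(j))$ induces a unique function $f \from \V{X} \to \V{L}$, and the edge condition on each $f_j$ forces $f$ to send edges of $X$ to either edges of $L$ or degenerate pairs, making $f$ a graph map.

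For colimits, given $D \from \cat{J} \to \Graph$, we would set $\V{C} \coloneq \colim_{j \in \cat{J}} \V{D(j)}$ with coprojections $\iota_j \from \V{D(j)} \to \V{C}$, and declare a two-element subset $\{v, v'\} \subseteq \V{C}$ to be an edge of $C$ iff there exist $j \in \ob \cat{J}$ and $\{x, x'\} \in \E{D(j)}$ with $\{\iota_j(x), \iota_j(x')\} = \{v, v'\}$. Each $\iota_j$ is then a graph map and the family is a cocone. For the universal property, a compatible cocone $(g_j \from D(j) \to Y)$ induces a unique function $g \from \V{C} \to \V{Y}$; to see that $g$ is a graph map, one observes that any edge of $C$ arises, by construction, as $\{\iota_j(x), \iota_j(x')\}$ for some edge $\{x, x'\}$ of some $D(j)$, and the fact that $g_j$ is a graph map forces $\{g(v), g(v')\}$ to be either an edge of $Y$ or a degenerate pair.

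The main subtlety is in the colimit construction: the definition of $\E{C}$ must be given in terms of the concrete description of the vertex colimit rather than as a quotient of $\coprod_j \E{D(j)}$, since distinct edges of $\coprod_j D(j)$ can be identified in $C$ (and some can collapse to a single vertex). The verification amounts to checking that the above description of $\E{C}$ is symmetric in the choice of witness and compatible with the cocone relations, which is straightforward.
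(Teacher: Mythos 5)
Your proposal is correct, and it supplies exactly the standard argument that the paper leaves unproved (the proposition is stated with its proof omitted): limits and colimits are computed on vertex sets, with edges defined by the componentwise adjacency-or-equality condition for limits and as images of edges under the coprojections for colimits. You also handle the one genuine subtlety — defining the colimit's edge set directly on the quotient vertex set rather than as a quotient of $\coprod_j \E{D(j)}$ — so nothing is missing.
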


\subsection{Monoidal structure on the category of graphs} 

To define the notion of homotopy, we need to review the definition of the product of graphs.
Although $\Graph$ has all small (categorical) products, and in fact all small (co)limits, in the context of A-homotopy theory, we work with a different monoidal product, known as the Cartesian or box product.

\begin{definition}
    The \emph{box product} $X \gtimes Y$ of two graphs $X$ and $Y$ is defined as follows:
    \begin{align*}
        \V{(X \gtimes Y)} &= \V{X} \times \V{Y} \\
        \E{(X \gtimes Y)} &= \left\{ \left(x,y\right) \sim \left(x',y'\right) \ \middle\vert \
        \begin{aligned}
            \text{either } & x \sim x' \text{ and }    y = y' \\
            \text{or } & x = x' \text{ and } y \sim y'
        \end{aligned} \right\} \\
        &\cong \left(\E{X} \times \V{Y}\right) \sqcup \left(\V{X} \times \E{Y}\right)
    \end{align*}
\end{definition}

The box product differs from the categorical product in $\Graph$.
For instance, the box product $I_{1} \gtimes I_{1}$ is given by the graph on the left and has 4 edges, whereas the categorical product $I_{1} \times I_{1}$ is given by the graph on the right and has 6 edges.
\begin{figure}[H]
\centering
\begin{minipage}{0.45\textwidth}
\centering
\begin{tikzpicture}[colorstyle/.style={circle, draw=black, fill=black, thick, inner sep=0pt, minimum size=1 mm, outer sep=0pt},scale=1.5]
    \node (1) at (0,1) [colorstyle, label = above left: {$\left(0,0\right)$}] {};
    \node (2) at (1,1) [colorstyle, label = above right: {$\left(1,0\right)$}] {};
    \node (3) at (0,0) [colorstyle, label = below left: {$\left(0,1\right)$}] {};
    \node (4) at (1,0) [colorstyle, label = below right: {$\left(1,1\right)$}] {};
    \draw [thick] (1) -- (2);
    \draw [thick] (1) -- (3);
    \draw [thick] (3) -- (4);
    \draw [thick] (2) -- (4);
    \node at (0.5,-1) [anchor = south]{$I_{1} \gtimes I_{1}$};
\end{tikzpicture}
\end{minipage}
\begin{minipage}{0.45\textwidth}
\centering
\begin{tikzpicture}[colorstyle/.style={circle, draw=black, fill=black, thick, inner sep=0pt, minimum size=1 mm, outer sep=0pt},scale=1.5]
    \node (1) at (0,1) [colorstyle, label = above left: {$\left(0,0\right)$}] {};
    \node (2) at (1,1) [colorstyle, label = above right: {$\left(1,0\right)$}] {};
    \node (3) at (0,0) [colorstyle, label = below left: {$\left(0,1\right)$}] {};
    \node (4) at (1,0) [colorstyle, label = below right: {$\left(1,1\right)$}] {};
    \draw [thick] (1) -- (2);
    \draw [thick] (1) -- (3);
    \draw [thick] (3) -- (4);
    \draw [thick] (2) -- (4);
    \draw [thick] (2) -- (3);
    \draw [thick] (1) -- (4);
    \node at (0.5,-1) [anchor = south]{$I_{1} \times I_{1}$};
\end{tikzpicture}
\end{minipage}
\end{figure}

Given two graphs $X$ and $Y$, we have the maps $\pi_{X} \from X \gtimes Y \to X$ and $\pi_{Y} \from X \gtimes Y \to Y$ given by the following composites:
\[
\pi_{X} = \left(
\begin{tikzcd}
    X \gtimes Y \arrow[r,"\text{id}_{X} \gtimes !"]& X \gtimes I_{0} \arrow[r,"\cong"] & X
\end{tikzcd}
\right)
\]
\[
\pi_{Y} = \left(
\begin{tikzcd}
    X \gtimes Y \arrow[r,"! \gtimes \text{id}_{Y}"]& I_{0} \gtimes Y \arrow[r,"\cong"] & Y
\end{tikzcd}
\right)
\]
Thus, we have a map from the box product to the categorical product:
\[
    \left(\pi_{X}, \pi_{Y}\right) \from X \gtimes Y \to X \times Y
\]
We can check that this map is always bijective on vertices and injective on edges.

\begin{definition}
    The \emph{internal hom} $\ghom{X}{Y}$ of two graphs $X$ and $Y$ is defined as follows:
    \begin{align*}
        \V{{\ghom{X}{Y}}} &= \Graph\left(X,Y\right) \\
        \E{\ghom{X}{Y}} &= \left\{ f \sim g \ \middle\vert \ f,g \in \Graph\left(X,Y\right), f \neq g \text{ and } \forall x \in X, \text{ }
        \begin{aligned}
            \text{either } & f\left(x\right) \sim g\left(x\right) \\
            \text{or } & f\left(x\right) = g\left(x\right)
        \end{aligned}\right\}
    \end{align*}    
\end{definition}

One then easily verifies:

\begin{proposition} \label{prop:graph-closed-monoidal}
   The category $\left(\Graph,\gtimes,I_{0},\ghom{-}{-} \right)$ is a closed symmetric monoidal category.
   In other words, given graphs $X, Y, Z$, we have a bijection
    \[
        \Graph\left(X \gtimes Y, Z\right) \cong \Graph\left(X,\ghom{Y}{Z}\right)
    \]
    that is natural in $X$, $Y$, and $Z$.
    \qed 
\end{proposition}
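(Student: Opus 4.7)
The plan is to first establish the symmetric monoidal structure and then construct the hom-tensor adjunction directly from the definitions. For the monoidal part, I would define the associator $\alpha_{X,Y,Z} \from (X \gtimes Y) \gtimes Z \to X \gtimes (Y \gtimes Z)$, the unitors $\lambda_X \from I_0 \gtimes X \to X$ and $\rho_X \from X \gtimes I_0 \to X$, and the braiding $\sigma_{X,Y} \from X \gtimes Y \to Y \gtimes X$ on underlying vertex sets by the usual Cartesian bijections. To check each is a graph isomorphism, I would use the fact that an edge in $A \gtimes B$ is either of the form $(a,b) \sim (a',b)$ with $a \sim a'$ in $A$, or $(a,b) \sim (a,b')$ with $b \sim b'$ in $B$; a short case check in each case shows the image is again an edge. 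The pentagon, triangle, and hexagon identities then reduce to the corresponding identities for the Cartesian product of sets, since the structural morphisms are all bijective on vertices.

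For the closed structure, I would define the candidate bijection $\Phi_{X,Y,Z} \from \Graph(X \gtimes Y, Z) \to \Graph(X, \ghom{Y}{Z})$ by the classical formula $\Phi(f)(x)(y) = f(x,y)$, and its candidate inverse $\Psi(g)(x,y) = g(x)(y)$. The verification has three parts, each exploiting one type of edge in $X \gtimes Y$. First, for each fixed $x$, applying $f$ to the edges $(x,y) \sim (x,y')$ (whenever $y \sim y'$ in $Y$) shows that $\Phi(f)(x) \from Y \to Z$ is a graph map. Second, applying $f$ to the edges $(x,y) \sim (x',y)$ (whenever $x \sim x'$ in $X$) yields, for every $y \in Y$, either $f(x,y) = f(x',y)$ or $f(x,y) \sim f(x',y)$ in $Z$, which is precisely the condition that either $\Phi(f)(x) = \Phi(f)(x')$ (a contracted edge, which is allowed for a graph map) or $\Phi(f)(x) \sim \Phi(f)(x')$ in $\ghom{Y}{Z}$. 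Third, the analogous verification for $\Psi(g)$ splits according to the two edge types and uses, respectively, that each $g(x) \from Y \to Z$ is a graph map and that the edge relation in $\ghom{Y}{Z}$ gives pointwise adjacency or equality.

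Once $\Phi$ and $\Psi$ are both shown to land in graph maps, that they are mutually inverse is immediate, as is naturality in all three variables, since both are defined by evaluation at the level of vertex sets and the structural morphisms act by vertex-set bijections. The main obstacle, modest as it is, lies in the second verification above: the edge relation in $\ghom{Y}{Z}$ requires \emph{pointwise} adjacency-or-equality at every input together with $f \neq g$, and one must make sure that the data extracted from the box-product edges $(x,y) \sim (x',y)$ aligns cleanly with this condition rather than accidentally violating the $f \neq g$ clause; this is precisely the point at which the convention that a graph map may contract an edge to a single vertex is essential.
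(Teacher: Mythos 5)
Your proposal is correct and is exactly the routine verification the paper leaves to the reader (the proposition is stated with ``One then easily verifies'' and no written proof): the currying bijection $\Phi(f)(x)(y)=f(x,y)$ with inverse $\Psi$, checked against the two edge types of the box product, plus the evident associator, unitors, and braiding. Your observation that the contracted-edge convention for graph maps is what reconciles the $f\neq g$ clause in the definition of $\ghom{Y}{Z}$ with possibly equal curried maps is precisely the one point of substance, and you handle it correctly.
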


\subsection{Paths in a graph}

We now define combinatorial paths and recall some of their basic properties.

\begin{definition}
    Let $X$ be a graph and $x,x' \in \V{X}$ be vertices.
    \begin{enumerate}
        \item A \emph{path} $\gamma \from x \rightsquigarrow x'$ in $X$ of \emph{length} $n \in \mathbb{N}$ is a map $\gamma \from I_{n} \to X$ such that $\gamma\left(0\right) = x$ and $\gamma\left(n\right) = x'$.
        \item A \emph{path} $\gamma \from x \rightsquigarrow x'$ in $X$ of \emph{length} $\infty$ is a map $\gamma \from I_{\infty} \to X$ for which there exist integers $N_{-}, N_{+} \in \mathbb{Z}$ such that $\gamma\left(i\right) = x$ for all $i \leq N_{-}$ and $\gamma\left(i\right) = x'$ for all $i \geq N_{+}$.
    \end{enumerate}
\end{definition}

\begin{definition}
    Two vertices $x$ and $x'$ in a graph $X$ are \emph{path-connected} if there exists a path from $x$ to $x'$.
    A graph $X$ is \emph{path-connected} if every pair of vertices in $X$ is path-connected.
\end{definition}

\begin{definition}
    Given any vertex $x$ in a graph $X$, the \emph{constant path} $c_{x} \from x \rightsquigarrow x$ of length $n \in \mathbb{N} \cup \left\{\infty\right\}$ is given by the composite
    \[
    c_{x} = \left(\begin{tikzcd}
        I_{n} \arrow[r,"!"] & I_{0} \arrow[r,"x"] & X
    \end{tikzcd}\right)
    \]
\end{definition}

\begin{definition} \label{def:inverse-path}
    Let $X$ be a graph and $x,x' \in \V{X}$ be vertices.
    \begin{enumerate}
        \item Given a path $\gamma \from x \rightsquigarrow x'$ in $X$ of finite length $n$, its \emph{inverse path} $\overline{\gamma} \from x' \rightsquigarrow x$, defined by a map $\overline{\gamma} \from I_{n} \to X$, is given by $\overline{\gamma}\left(i\right) = \gamma\left(n-i\right)$.
        \item Given a path $\gamma \from x \rightsquigarrow x'$ in $X$ of length $\infty$, its \emph{inverse path} $\overline{\gamma} \from x' \rightsquigarrow x$, defined by a map $\overline{\gamma} \from I_{\infty} \to X$, is given by $\overline{\gamma}\left(i\right) = \gamma\left(-i\right)$.
    \end{enumerate}
\end{definition}

\begin{definition}
    Let $X$ be a graph and $x,x',x'' \in \V{X}$ be vertices.
    \begin{enumerate}
        \item Given two paths $\gamma \from x \rightsquigarrow x'$ and $\sigma \from x' \rightsquigarrow x''$ in $X$ of finite lengths $m$ and $n$ respectively, their \emph{concatenation} $\gamma \ast \sigma \from x \rightsquigarrow x''$, defined by a map $\gamma \ast \sigma \from I_{m+n} \to X$, is given by
        \[
            \left(\gamma \ast \sigma\right)\left(i\right) =
            \begin{cases}
                \gamma\left(i\right) & \text{if } i \leq m \\
                \sigma\left(i-m\right) & \text{if } i \geq m
            \end{cases}
        \]
        \item Given two paths $\gamma \from x \rightsquigarrow x'$ and $\sigma \from x' \rightsquigarrow x''$ in $X$, both of length $\infty$, their \emph{concatenation} $\gamma \ast \sigma \from x \rightsquigarrow x''$, defined by a map $\gamma \ast \sigma \from I_{\infty} \to X$, is given by
        \[
            \left(\gamma \ast \sigma\right)\left(i\right) =
            \begin{cases}
                \gamma\left(i\right) & \text{if } i \leq M_{+} \\
                \sigma\left(i-M_{+}+N_{-}\right) & \text{if } i \geq M_{+}
            \end{cases}
        \]
        where $M_{+}$ is the smallest integer such that $\gamma\left(i\right) = x'$ for all $i \geq M_{+}$ and $N_{-}$ is the largest integer such that $\sigma\left(i\right) = x'$ for all $i \leq N_{-}$.
    \end{enumerate}
\end{definition}

\begin{definition} \label{def:edge-of-path}
    Let $n \in \mathbb{N}$ and for $i = 1, \ldots, n$, let $e_{i} \from I_{1} \to I_{n}$ be the graph map given by
    \[
        e_{i}\left(j\right) =
        \begin{cases}
            i-1 & \text{if } j = 0 \\
            i & \text{if } j = 1
        \end{cases}
    \]
    Given a path $\gamma \from x \rightsquigarrow x'$ in $X$ of finite length $n$, its \emph{$i$-th edge} is given by the path $e_{i}^{*}\left(\gamma\right) = \gamma \circ e_{i} \from \gamma\left(i-1\right) \rightsquigarrow \gamma\left(i\right)$ of length $1$, where $i = 1, \ldots, n$.
\end{definition}

\begin{proposition}
    Path-connectedness of vertices in a graph $X$ is the equivalence relation on $\V{X}$ generated by adjacency of vertices.
    \qed
\end{proposition}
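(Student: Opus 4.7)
The plan is to show that the path-connectedness relation on $\V{X}$, call it $\approx$, satisfies two properties: it is an equivalence relation containing the adjacency relation, and any equivalence relation containing adjacency must contain $\approx$.

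First I would verify that $\approx$ is an equivalence relation. Reflexivity follows by using the constant path $c_{x} \from x \rightsquigarrow x$ of length $0$ (or length $1$ if preferred); symmetry follows from \cref{def:inverse-path}, which produces an inverse path $\bar\gamma \from x' \rightsquigarrow x$ from any path $\gamma \from x \rightsquigarrow x'$; and transitivity follows from the concatenation operation $\gamma \ast \sigma$. Next, $\approx$ contains the adjacency relation: if $x \sim x'$ in $X$, then the map $\gamma \from I_{1} \to X$ sending $0 \mapsto x$ and $1 \mapsto x'$ is a well-defined graph map (the unique edge of $I_{1}$ maps to the edge $\{x,x'\}$), hence a path of length $1$ witnessing $x \approx x'$.

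It remains to show minimality: any equivalence relation $R$ on $\V{X}$ containing adjacency must contain $\approx$. Suppose $x \approx x'$, witnessed by a path $\gamma \from I_{n} \to X$ of finite length. By the definition of a graph map, for each $i = 0, \dots, n-1$ we have $\gamma(i) = \gamma(i+1)$ or $\gamma(i) \sim \gamma(i+1)$; either way $(\gamma(i), \gamma(i+1)) \in R$, since $R$ contains adjacency and is reflexive. By transitivity of $R$, we conclude $(x, x') = (\gamma(0), \gamma(n)) \in R$. For the case of an infinite path $\gamma \from I_\infty \to X$ with stabilization bounds $N_{-} \le N_{+}$, the same argument applies to the finite subsequence $\gamma(N_{-}), \gamma(N_{-}+1), \dots, \gamma(N_{+})$, since $\gamma$ is constant at $x$ on $(-\infty, N_{-}]$ and constant at $x'$ on $[N_{+}, \infty)$.

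No serious obstacle is expected; the only mild subtlety is keeping track of the two path conventions (finite and infinite length) simultaneously, but both reduce to the same argument once one extracts the finite sequence of consecutive vertices from the path.
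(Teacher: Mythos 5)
Your proof is correct, and it is the standard argument the paper implicitly has in mind (the paper states this proposition without proof, marking it as routine): path-connectedness is an equivalence relation containing adjacency (constant, inverse, and concatenated paths), and conversely any equivalence relation containing adjacency absorbs the finite chain of consecutive vertices of a path, with the infinite-length case reducing to the finite one via the stabilization bounds. Nothing is missing.
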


Given a graph map $f \from X \to Y$ and a path $\gamma \from x \rightsquigarrow x'$ in $X$, the composite $f \circ \gamma$ is a path in $Y$ from $f\left(x\right)$ to $f\left(x'\right)$.
Thus, if two vertices in $X$ are path-connected, then their respective images under $f$ are path-connected in $Y$.

\begin{definition}
    The \emph{set of path-components} $\pi_{0}X$ of a graph $X$ is the set of equivalence classes of vertices of $X$ under path-connectedness.
    A graph map $f \from X \to Y$ induces a function $\pi_{0}f \from \pi_{0}X \to \pi_{0}Y$.
    The assignment $f \mapsto \pi_{0}f$ is clearly functorial, and we have a well-defined functor $\pi_{0} \from \Graph \to \Set$.
\end{definition}

\begin{proposition}\label{prop:pi0-discrete-graph-adjunction}
    The functor $\pi_{0} \from \Graph \to \Set$ is left adjoint to the functor $\Set \to \Graph$ that maps a set $X$ to the discrete graph on $X$.
    \[
    \pushQED{\qed}
    \begin{tikzcd}
        \Set \arrow[rr,"\mathsf{discrete}" description] && \arrow[ll, bend right,"\bot","\pi_{0}"'] \arrow[ll, bend left,"\bot"',"\V{(-)}"] \Graph
    \end{tikzcd}
    \qedhere
    \popQED
    \]    
\end{proposition}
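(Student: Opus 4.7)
The plan is to exhibit the natural bijection
\[
    \Graph\bigl(X,\mathsf{discrete}(S)\bigr) \iso \Set\bigl(\pi_0 X, S\bigr)
\]
by unwinding definitions, and then observe that naturality is automatic.

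First I would analyze what a graph map $f \from X \to \mathsf{discrete}(S)$ amounts to. Since $\mathsf{discrete}(S)$ has no edges, the requirement from the definition of a graph map forces $f(x) = f(x')$ whenever $x \sim x'$ in $X$. Thus such a map is precisely a function $\V{X} \to S$ that is constant on adjacent pairs of vertices.

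Next I would invoke the previous proposition, which states that path-connectedness is the equivalence relation on $\V{X}$ generated by adjacency. Consequently, a function $\V{X} \to S$ is constant on adjacent pairs \iff{} it is constant on path-components, i.e., \iff{} it factors (uniquely) through the quotient map $\V{X} \fto \pi_0 X$. This exactly identifies $\Graph(X, \mathsf{discrete}(S))$ with $\Set(\pi_0 X, S)$, the bijection sending $f$ to the induced map $[x] \mapsto f(x)$, with inverse sending $\bar{f} \from \pi_0 X \to S$ to its precomposition with the quotient.

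Finally I would check naturality in both variables: in $X$, it follows because $\pi_0$ was already shown to be a functor (and the quotient map is natural in $X$); in $S$, it follows from post-composition being functorial. This is a routine verification and I do not expect any obstacle here; the entire argument is essentially definitional, relying on the observation that the absence of edges in the codomain collapses the edge-compatibility condition into the generating relation for $\pi_0$.
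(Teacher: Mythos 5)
Your proof is correct and is exactly the standard argument the paper has in mind when it leaves this proposition unproved: maps into an edgeless graph are functions constant on adjacent vertices, hence factor uniquely through $\pi_0 X$ by the preceding proposition on path-connectedness being generated by adjacency, and naturality is routine. No gaps.
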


Given two graphs $X$ and $Y$, we can apply $\pi_{0}$ to the maps $\pi_{X} \from X \gtimes Y \to X$ and $\pi_{Y} \from X \gtimes Y \to Y$ and then take their product in $\Set$ to obtain the following set-function:
\[
    \pi_{0}\left(X \gtimes Y\right) \to \pi_{0}X \times \pi_{0}Y
\]

Recall that a functor $F \from (\cat{C},\otimes_{\cat{C}},1_{\cat{C}}) \to (\cat{D},\otimes_{\cat{D}},1_{\cat{D}})$ between two monoidal categories, that is equipped with natural isomorphisms $F(1_{\cat{C}}) \to 1_{\cat{D}}$ and $F(X \otimes_{\cat{C}} Y) \to F(X) \otimes_{\cat{D}} F(Y)$ for every $X, Y \in \cat{C}$ subject to coherence axioms corresponding to associativity and unitality, is strong monoidal.

\begin{proposition}
    The functor $\pi_{0} \from \Graph \to \Set$ is strong monoidal.
    In particular, the set-function $\pi_{0}\left(X \gtimes Y\right) \to \pi_{0}X \times \pi_{0}Y$ is a bijection that is natural in the graphs $X$ and $Y$.
\end{proposition}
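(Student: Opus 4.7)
The statement has two parts to verify: (i) the comparison map $\pi_0(X \gtimes Y) \to \pi_0 X \times \pi_0 Y$ is a natural bijection, and (ii) together with the evident identification $\pi_0(I_0) \cong \{*\}$, this upgrades $\pi_0$ to a strict monoidal functor. My plan is to treat (i) as the main content and dispatch (ii) by observing that the coherence isomorphisms on either side are built from the same underlying set-maps.

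First, for the unit: $I_0$ has a single vertex, so $\pi_0(I_0)$ is canonically identified with the monoidal unit $*$ of $(\Set, \times, *)$.

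Next, for the bijection $\pi_0(X \gtimes Y) \to \pi_0 X \times \pi_0 Y$, which sends $[(x,y)]$ to $([x], [y])$. Surjectivity is immediate: any $([x], [y])$ is hit by the class of $(x, y)$. For injectivity, suppose $[x_1] = [x_2]$ in $\pi_0 X$ and $[y_1] = [y_2]$ in $\pi_0 Y$, witnessed by paths $\gamma \colon I_n \to X$ from $x_1$ to $x_2$ and $\sigma \colon I_m \to Y$ from $y_1$ to $y_2$. The composites
\[
    I_n \iso I_n \gtimes I_0 \xrightarrow{\gamma \gtimes y_1} X \gtimes Y, \qquad I_m \iso I_0 \gtimes I_m \xrightarrow{x_2 \gtimes \sigma} X \gtimes Y
\]
supply paths $(x_1, y_1) \rightsquigarrow (x_2, y_1)$ and $(x_2, y_1) \rightsquigarrow (x_2, y_2)$; concatenating them shows $[(x_1, y_1)] = [(x_2, y_2)]$ in $\pi_0(X \gtimes Y)$. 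Naturality in $X$ and $Y$ is immediate from the formula.

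Finally, for strictness: the associator $(X \gtimes Y) \gtimes Z \iso X \gtimes (Y \gtimes Z)$ and the unitors $I_0 \gtimes X \iso X \iso X \gtimes I_0$ act on vertex sets exactly as the corresponding coherence isomorphisms in $(\Set, \times, *)$ act on $\V{X} \times \V{Y} \times \V{Z}$, $\{*\} \times \V{X}$, and $\V{X} \times \{*\}$. Since $\pi_0$ is a set-theoretic quotient of the vertex-set functor, the hexagon and triangle diagrams defining a (strict) monoidal functor commute on the nose.

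The only step with genuine mathematical content is the injectivity argument, and the hard part there is really just the conceptual observation that the edge structure of $X \gtimes Y$ --- namely that $(x, y) \sim (x', y')$ requires one coordinate fixed and the other adjacent --- is precisely what allows paths in a single factor to be lifted to the product at a constant value of the other coordinate; everything else is formal.
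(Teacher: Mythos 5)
Your proposal is correct and follows essentially the same route as the paper: the paper likewise notes $\pi_0 I_0 \cong \{\ast\}$ and proves the bijection by exhibiting the concatenated path $(\gamma \gtimes y) \ast (x' \gtimes \sigma)$ from $(x,y)$ to $(x',y')$ in $X \gtimes Y$, which is exactly your injectivity argument (surjectivity being immediate since $\V{(X \gtimes Y)} = \V{X} \times \V{Y}$). Your additional remarks on the unitors, associators, and naturality only spell out what the paper leaves implicit.
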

\begin{proof}
     Clearly the functor $\pi_{0} \from \Graph \to \Set$ preserves units, since $\pi_{0}I_{0} \cong \left\{\ast\right\}$.
     In order to check that the set-function $\pi_{0}\left(X \gtimes Y\right) \to \pi_{0}X \times \pi_{0}Y$ is a bijection, we need to check that given two vertices $x, x'$ that are path-connected in $X$ and two vertices $y,y'$ that are path-connected in $Y$, the pairs $\left(x,y\right)$ and $\left(x',y'\right)$ are path-connected in the box-product $X \gtimes Y$.
     Let $\gamma \from x \rightsquigarrow x'$ be a path in $X$ and let $\sigma \from y \rightsquigarrow y'$ be a path in $Y$.
     Then, the concatenation $\left(\gamma \otimes y\right) \ast \left(x' \otimes \sigma \right)$ is a path from $\left(x,y\right)$ to $\left(x',y'\right)$ in $X \gtimes Y$.
\end{proof}

\subsection{Homotopy theory of graphs}

We are now ready to define homotopies and homotopy equivalences.
Before doing so, we make a small observation that helps motivate these notions.

Observe that two distinct maps $f, g \from X \to Y$ are adjacent in the graph $\ghom{X}{Y}$ if and only if there exists a graph map $H \from I_{1} \to \ghom{X}{Y}$ such that $H\left(0\right) = f$ and $H\left(1\right) = g$, or equivalently, a graph map $H \from X \gtimes I_{1} \to Y$ such that $H\left(-,0\right) = f$ and $H\left(-,1\right) = g$.

\begin{definition}
    Let $X$ and $Y$ be two graphs and $f, g \from X \to Y$ be two graph maps.
    \begin{enumerate}
        \item An \emph{$A$-homotopy} $H \from f \Rightarrow g$ of length $n \in \mathbb{N}$ is a map $H \from X \gtimes I_{n} \to Y$ such that $H\left(-,0\right) = f$ and $H\left(-,n\right) = g$.
        \item An \emph{$A$-homotopy} $H \from f \Rightarrow g$ of length $\infty$ is a map $H \from X \gtimes I_{\infty} \to Y$ for which there exist integers $N_{-}, N_{+} \in \mathbb{Z}$ such that $H\left(-,i\right) = f$ for all $i \leq N_{-}$ and $H\left(-,i\right) = g$ for all $i \geq N_{+}$.
    \end{enumerate}
    When such an $A$-homotopy exists, we say that $f$ and $g$ are $A$-homotopic and write $f \sim_{A} g$.
    Equivalently, an $A$-homotopy $H \from f \Rightarrow g$ between two graph maps $f, g \from X \to Y$ is a path $H \from f \rightsquigarrow g$ in the graph $\ghom{X}{Y}$.
    It follows that $\sim_{A}$ is an equivalence relation on the set $\Graph\left(X,Y\right)$.
\end{definition}

\begin{definition}
    A graph map $f \from X \to Y$ is an \emph{$A$-homotopy equivalence} if there exists some graph map $g \from X \to Y$ such that $g \circ f \sim_{A} \text{id}_{X}$ and $f \circ g \sim_{A} \text{id}_{Y}$.
    When such a homotopy equivalence exists, we say that the graphs $X$ and $Y$ are \emph{$A$-homotopy equivalent} and write $X \simeq_{A} Y$.
\end{definition}

Note that $\simeq_{A}$ defines an equivalence relation on the class of graphs.

\begin{definition}
    A graph $X$ is \emph{$A$-contractible} if the unique map $X \to I_{0}$ is an $A$-homotopy equivalence.
    In other words, if there exists some vertex $x_{0} \from I_{0} \to X$ such that $\text{id}_{G} \sim_{A} c_{x_{0}}$, where $c_{x_{0}} \from X \to X$ is the composite:
    \[
    c_{x_{0}} = \left(\begin{tikzcd}
        X \arrow[r] & I_{0} \arrow[r,"x_{0}"] & X
    \end{tikzcd}\right)
    \]
\end{definition}

\begin{example}
    The interval graph $I_{n}$ is contractible for $n \in \mathbb{N}$. However the infinite interval $I_{\infty}$ is not contractible.
\end{example}

\begin{example}[\cite{barcelo-kramer-laubenbacher-weaver}]
    The cycle graph $C_{n}$ is contractible for $n = 3, 4$.
    The following figures depict $A$-homotopies from the identity map on $C_{n}$ to the constant map $c_{0}$, for $n = 3, 4$.
    \begin{figure}[H]
\centering
\begin{minipage}{0.45\textwidth}
\centering
\begin{tikzpicture}[colorstyle/.style={circle, draw=black, fill=black, thick, inner sep=0pt, minimum size=1 mm, outer sep=0pt},scale=1.5]
    \node (1) at (0,1) [colorstyle, label = left: {0}] {};
    \node (2) at (1,1) [colorstyle, label = right: {2}] {};
    \node (3) at (0,0) [colorstyle, label = left: {0}] {};
    \node (4) at (1,0) [colorstyle, label = right: {0}] {};
    \node (5) at (0.75,1.433) [colorstyle, label = above: {1}] {};
    \node (6) at (0.75,0.433) [colorstyle, label = above left: {0}] {};
    \draw [thick] (1) -- (2);
    \draw [thick] (1) -- (3);
    \draw [thick] (3) -- (4);
    \draw [thick] (2) -- (4);
    \draw [thick] (1) -- (5); 
    \draw [thick] (5) -- (2);
    \draw [thick] (3) -- (6); 
    \draw [thick] (6) -- (4);
    \draw [thick] (5) -- (6);
    \node at (0.5,-1) [anchor = south]{$C_{3} \gtimes I_{1} \to C_{3}$};
\end{tikzpicture}
\end{minipage}
\begin{minipage}{0.45\textwidth}
\centering
\begin{tikzpicture}[colorstyle/.style={circle, draw=black, fill=black, thick, inner sep=0pt, minimum size=1 mm, outer sep=0pt},scale=1.5]
    \node (1) at (0,2) [colorstyle, label = left: {0}] {};
    \node (2) at (1,2) [colorstyle, label = below right: {1}] {};
    \node (3) at (0.75,2.433) [colorstyle, label = above left: {3}] {};
    \node (4) at (1.75,2.433) [colorstyle, label = right: {2}] {};
    \node (5) at (0,1) [colorstyle, label = left: {0}] {};
    \node (6) at (1,1) [colorstyle, label = below right: {1}] {};
    \node (7) at (0.75,1.433) [colorstyle, label = above left: {0}] {};
    \node (8) at (1.75,1.433) [colorstyle, label = right: {1}] {};
    \node (9) at (0,0) [colorstyle, label = left: {0}] {};
    \node (10) at (1,0) [colorstyle, label = below right: {0}] {};
    \node (11) at (0.75,0.433) [colorstyle, label = above left: {0}] {};
    \node (12) at (1.75,0.433) [colorstyle, label = right: {0}] {};
    \draw [thick] (1) -- (2);
    \draw [thick] (1) -- (3);
    \draw [thick] (3) -- (4);
    \draw [thick] (2) -- (4);
    
    \draw [thick] (5) -- (6);
    \draw [thick] (5) -- (7);
    \draw [thick] (7) -- (8);
    \draw [thick] (6) -- (8);
    
    \draw [thick] (9) -- (10);
    \draw [thick] (9) -- (11);
    \draw [thick] (11) -- (12);
    \draw [thick] (10) -- (12);

    \draw [thick] (1) -- (9);
    \draw [thick] (2) -- (10);
    \draw [thick] (3) -- (11);
    \draw [thick] (4) -- (12);
    
    \node at (0.875,-1) [anchor = south]{$C_{4} \gtimes I_{2} \to C_{4}$};
\end{tikzpicture}
\end{minipage}
\end{figure}
\end{example}

\subsection{Homotopy groups of graphs}

We conclude our background section by reviewing the definition of discrete homotopy groups.

\begin{definition}
    Let $X$ be a graph and $x_{0},x_{1} \in \V{X}$ be two vertices.
    For $m \in \mathbb{N} \cup \left\{\infty\right\}$, the $m$-path graph $P_{m}X\left(x_{0},x_{1}\right)$ is the full subgraph of $\ghom{I_{m}}{X}$ with vertices given by paths of length $m$ in $X$ from $x_{0}$ to $x_{1}$.
\end{definition}

Observe that two distinct paths $\gamma, \sigma \from x_{0} \rightsquigarrow x_{1}$ are adjacent in the graph $P_{m}X\left(x_{0},x_{1}\right)$ if and only if there exists a graph map $H \from I_{1} \to P_{m}X\left(x_{0},x_{1}\right)$ such that $H\left(0\right) = \gamma$ and $H\left(1\right) = \sigma$, or equivalently, if and only if there exists a graph map $H \from I_{m} \gtimes I_{1} \to X$ such that $H\left(-,0\right) = \gamma$ and $H\left(-,1\right) = \sigma$.

\begin{definition}
    Let $X$ be a graph, $x_{0}, x_{1} \in X$ be two vertices, and $\gamma, \sigma \from x_{0} \rightsquigarrow x_{1}$ be two paths in $X$ of length $m$ for some $m \in \mathbb{N} \cup \left\{\infty\right\}$. 
    \begin{enumerate}
        \item A \emph{path-homotopy} $H \from \gamma \Rightarrow \sigma$ of length $n \in \mathbb{N}$ is a map $H \from I_{m} \gtimes I_{n} \to X$ such that $H\left(-,0\right) = \gamma$ and $H\left(-,n\right) = \sigma$, and such that each $H\left(-,i\right)$ is a path from $x_{0}$ to $x_{1}$ in $X$ of length $m$.
        \item A \emph{path-homotopy} $H \from \gamma \Rightarrow \sigma$ of length $\infty$ is a map $H \from I_{m} \gtimes I_{\infty} \to X$ for which there exist integers $N_{-}, N_{+} \in \mathbb{Z}$ such that $H\left(-,i\right) = \gamma$ for all $i \leq N_{-}$ and $H\left(-,i\right) = \sigma$ for all $i \geq N_{+}$, and such that each $H\left(-,i\right)$ is a  path from $x_{0}$ to $x_{1}$ in $X$ of length $m$.
    \end{enumerate} 
    When such a path-homotopy exists, we say that $\gamma$ and $\sigma$ are path-homotopic and write $\gamma \sim_{m} \sigma$.
\end{definition}

Equivalently, a path-homotopy $H \from \gamma \Rightarrow \sigma$ between two paths $\gamma, \sigma \from x_{0} \rightsquigarrow x_{1}$ in $X$, both of length $m$, is a path $H \from \gamma \rightsquigarrow \sigma$ in the graph $P_{m}X\left(x_{0},x_{1}\right)$.
It follows that $\sim_{m}$ is an equivalence relation on the set $\V{P_{m}X\left(x_{0},x_{1}\right)}$.

\begin{definition}
    For a pointed graph $\left(X,x_{0}\right)$ and $m \in \mathbb{N} \cup \left\{\infty\right\}$, the $m$-loop graph $\Omega_{m}\left(X,x_{0}\right)$ is given by $P_{m}X\left(x_{0},x_{0}\right)$.
    It has a distinguished vertex given by the constant path at $x_{0}$ of length $m$.
    This gives an endofunctor $\Omega_{m} \from \Graph_{*} \to \Graph_{*}$.
\end{definition}

Iterating the endofunctor $\Omega_{m} \from \Graph_{*} \to \Graph_{*}$, we get:
\[
    \Omega_{m}^{d} \from \Graph_{*} \to \Graph_{*}, \qquad \text{for } d \in \mathbb{N}
\]

\begin{proposition}
    For any pointed graph $\left(X,x_{0}\right)$, $m \in \mathbb{N} \cup \left\{\infty\right\}$ and $d \in \mathbb{N}$, the graph $\Omega_{m}^{d}\left(X,x_{0}\right)$ is isomorphic to the full subgraph of $\ghom{\gexp{I_{m}}{d}}{X}$ with vertices given by the maps $f \from \gexp{I_{m}}{d} \to X$ for which each component
    \[
        f_{i} \from I_{m} \to \ghom{\gexp{I_{m}}{d-1}}{G}, \qquad \text{for } i = 1, \ldots, d
    \]
    given by
    \[
        t \mapsto \left(\left(t_{1},\ldots,\hat{t_{i}},\ldots,t_{d}\right) \mapsto f\left(t_{1},\ldots,t,\ldots,t_{d}\right)\right)
    \]
    is a path of length $m$ from $c_{x_{0}}$ to $c_{x_{0}}$, where $c_{x_{0}}$ is the constant map at $x_{0}$.
    \qed
\end{proposition}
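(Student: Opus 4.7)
The plan is to induct on $d$, leveraging the closed symmetric monoidal structure on $\Graph$ from \cref{prop:graph-closed-monoidal} to convert iterated inner-homs out of $I_m$ into a single inner-hom out of the cartesian power $\gexp{I_m}{d}$.

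The base case $d = 1$ is the definition of $\Omega_m(X,x_0) = P_m X(x_0,x_0)$: the sole ``component'' of $f \from I_m \to X$ is $f$ itself, and the constant map $c_{x_0}$ at level $0$ is the vertex $x_0$, so the stated vertex condition reduces to $f$ being a path of length $m$ from $x_0$ to $x_0$. (The case $d = 0$ is trivially true on both sides, since there are no components to constrain and $\ghom{I_0}{X} \cong X$.)

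For the inductive step, assume the result for $d$. By definition, $\Omega_m^{d+1}(X,x_0)$ is the full subgraph of $\ghom{I_m}{\Omega_m^d(X,x_0)}$ on paths of length $m$ from $c_{x_0}$ to $c_{x_0}$. Combining the inductive hypothesis, which identifies $\Omega_m^d(X,x_0)$ with a full subgraph of $\ghom{\gexp{I_m}{d}}{X}$, with the closed monoidal adjunction
\[
    \ghom{I_m}{\ghom{\gexp{I_m}{d}}{X}} \iso \ghom{I_m \gtimes \gexp{I_m}{d}}{X} = \ghom{\gexp{I_m}{d+1}}{X}
\]
yields a graph isomorphism under which $g \from I_m \to \ghom{\gexp{I_m}{d}}{X}$ corresponds to $f$ with $f(t,t_1,\ldots,t_d) = g(t)(t_1,\ldots,t_d)$. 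It then remains to verify that the two vertex conditions match up. On the loop side one requires (a) each $g(t)$ to lie in the subgraph $\Omega_m^d(X,x_0)$ (equivalently, by the inductive hypothesis, to have all $d$ of its components loops at $c_{x_0}$) and (b) $g$ itself to be a path of length $m$ from $c_{x_0}$ to $c_{x_0}$. A straightforward unfolding of the component formula shows $f_1 = g$, so (b) matches the loop condition on $f_1$; and for $k = 1,\ldots,d$ a short computation gives
\[
    f_{k+1}(s)(t,t_1,\ldots,\hat{t_k},\ldots,t_d) = (g(t))_k(s)(t_1,\ldots,\hat{t_k},\ldots,t_d),
\]
so $f_{k+1}$ is a loop at $c_{x_0}$ iff $(g(t))_k$ is a loop at $c_{x_0}$ for every $t$, matching (a). Edges transfer automatically, since the adjunction is a graph isomorphism and we pass to full subgraphs on both sides.

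The main subtlety, and the only nontrivial point, is the case $m = \infty$: here ``path of length $\infty$'' requires eventually-constant behaviour, and condition (a) alone only gives \emph{pointwise-in-$t$} eventual-constancy of the $(g(t))_k$, whereas making $f_{k+1}$ eventually $c_{x_0}$ requires bounds \emph{uniform} in $t$. This upgrade is free once condition (b) is in hand, since (b) forces $g(t) = c_{x_0}$ for all $t$ outside some finite interval $[N_-,N_+]$, and on this finite set one can take the max/min of the per-$t$ bounds to obtain the required uniform bound. The converse direction is immediate.
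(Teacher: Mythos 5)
Your proof is correct: the paper states this proposition without proof (it is marked as routine), and your currying induction via the closed monoidal structure, identifying $\Omega_{m}^{d+1}\left(X,x_{0}\right)$ inside $\ghom{I_{m}}{\ghom{\gexp{I_{m}}{d}}{X}} \cong \ghom{\gexp{I_{m}}{d+1}}{X}$ and matching components, is exactly the natural argument being relied upon. You also correctly isolate and resolve the only genuine subtlety, namely upgrading pointwise eventual constancy to a uniform bound when $m = \infty$ using the eventual constancy of $g$ itself.
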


\begin{definition}
    For $d \geq 1$, and for each $i = 1, \ldots, d$, we define a binary operation $\cdot_{i}$ on the vertex set of the graph $\Omega_{\infty}^{d}\left(X,x_{0}\right)$ as follows: given $f,g \in \Omega_{\infty}^{d}\left(X,x_{0}\right)$, let $f \cdot_{i} g \in \Omega_{\infty}^{d}\left(X,x_{0}\right)$ be the element of $\ghom{\gexp{I_{\infty}}{d}}{X}$ corresponding to the concatenation $f_{i} \ast g_{i}$, where $f_{i}, g_{i} \from I_{\infty} \to \ghom{\gexp{I_{\infty}}{d-1}}{X}$ are the paths $c_{x_{0}} \rightsquigarrow c_{x_{0}}$ of length $\infty$ given by
    \[
        t \mapsto \left(\left(t_{1},\ldots,\hat{t_{i}},\ldots,t_{d}\right) \mapsto f\left(t_{1},\ldots,t,\ldots,t_{d}\right)\right)
    \]
    and
    \[
        t \mapsto \left(\left(t_{1},\ldots,\hat{t_{i}},\ldots,t_{d}\right) \mapsto g\left(t_{1},\ldots,t,\ldots,t_{d}\right)\right)
    \]
    respectively.
\end{definition}

Note that the binary operation $\cdot_{i}$ does \emph{not} define a graph map $\Omega_{\infty}^{d}\left(X,x_{0}\right) \gtimes \Omega_{\infty}^{d}\left(X,x_{0}\right) \to \Omega_{\infty}^{d}\left(X,x_{0}\right)$.

\begin{definition} \label{def:homotopy-groups}
    For $d \in \mathbb{N}$, we define the \emph{$d$-th $A$-homotopy group} $A_{d}\left(X,x_{0}\right)$ of a pointed graph $\left(X,x_{0}\right)$ to be the set $\pi_{0}\Omega_{\infty}^{d}\left(X,x_{0}\right)$ of path-components of the graph $\Omega_{\infty}^{d}\left(X,x_{0}\right)$.
    For $d \geq 1$, and for each $i = 1, \ldots, d$, the binary operation $\cdot_{i}$ induces a group operation on homotopy groups $\cdot_{i} \from A_{d}\left(X,x_{0}\right) \times A_{d}\left(X,x_{0}\right) \to A_{d}\left(X,x_{0}\right)$.
    For $d \geq 2$ and $1 \leq i < j \leq d$, the group operations $\cdot_{i}$ and $\cdot_{j}$ satisfy the interchange law: given any $a, b, c, d \in \Omega_{\infty}^{d}\left(X,x_{0}\right)$, we have a path in $\Omega_{\infty}^{d}\left(X,x_{0}\right)$ from $\left(a \cdot_{i} b\right) \cdot_{j} \left(c \cdot_{i} d\right)$ to $\left(a \cdot_{j} c\right) \cdot_{i} \left(b \cdot_{j} d\right)$.
    It follows by the Eckmann-Hilton argument that these group operations coincide and are abelian.
\end{definition}

In $\times$-homotopy theory, one can show that the question of homotopy between graph maps $f, g \colon X \to Y$ can be studied using the Hom-complex $\mathrm{Hom}(X, Y)$, which is a topological space encapsulating the homotopy type of the exponential object $Y^X$.
A similar construction exists in A-homotopy theory, whereby one can understand the homotopy type of a graph $X$ via a topological space $|\mathrm{N}_1 X|$, the geometric realization of its $1$-nerve \cite{carranza-kapulkin:cubical-graphs}.
Indeed, the classical homotopy groups of that space coincide with the $A$-homotopy groups of the graph.

\section{A finite formulation of the homotopy groups} \label{sec:fin-fomrulation}

In the preceding section, we defined homotopy groups using infinite paths (\cref{def:homotopy-groups}), but we could have used paths of finite length instead.
While this statement is clear on the intuitive level, its proof is quite technical.
In this section, we develop the necessary tools and prove that the two definitions (via `finite' vs `infinite' grids) of homotopy groups agree.
Note that the length of a path in a graph is a property of the map, and not its image.
To emphasize this point, we introduce the notion of reparametrization of a path.

\begin{definition} \label{def:shrinking}
    Let $m, n \in \mathbb{N} \cup \left\{\infty\right\}$.
    A \emph{shrinking} map is a surjective and order-preserving graph map $s \from I_{m} \to I_{n}$.
    Given a path $\gamma  \from x \rightsquigarrow x'$ of length $n$ in a graph $X$ and a shrinking map $s \from I_{m} \to I_{n}$, the composite $\gamma \circ s$ is a path in $X$ of length $m$ from $x$ to $x'$, traversing the same sequence of edges as $\gamma$.
    We say that the paths $\gamma \circ s$ and $\gamma$ are \emph{reparametrizations} of each other.
\end{definition}

\begin{lemma} \label{lem:reparametrization-finite-infinite}
\leavevmode
    \begin{enumerate}
        \item Every path of finite length admits a reparametrization of length $\infty$.
        \item Every path of length $\infty$ admits a reparametrization of finite length.
    \end{enumerate}
\end{lemma}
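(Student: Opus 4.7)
The plan is to prove both parts by directly exhibiting the required shrinking maps. The key observation is that a shrinking map $s \from I_{\infty} \to I_{n}$ with $n$ finite must, by surjectivity and order-preservation, send all sufficiently negative integers to $0$ and all sufficiently positive integers to $n$; this matches precisely the stabilization condition that appears in the definition of a length-$\infty$ path.

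For part (1), given a finite-length path $\gamma \from I_{n} \to X$, I would define $s \from I_{\infty} \to I_{n}$ by the clamping formula
\[
    s(i) = \max\left(0, \min(n, i)\right).
\]
This is visibly order-preserving; it is surjective since $s(j) = j$ for $0 \leq j \leq n$; and it is a graph map because adjacent integers $i, i+1$ in $I_{\infty}$ are sent to vertices of $I_{n}$ that either coincide or differ by $1$. The composite $\gamma \circ s \from I_{\infty} \to X$ is then a reparametrization of $\gamma$, and it qualifies as a path of length $\infty$ with stabilization bounds $N_{-} = 0$ and $N_{+} = n$.

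For part (2), given $\gamma \from I_{\infty} \to X$ with stabilization constants $N_{-} \leq N_{+}$ as in the definition, set $n = N_{+} - N_{-}$ and define the candidate finite-length path $\delta \from I_{n} \to X$ by $\delta(j) = \gamma(N_{-} + j)$. This is a graph map because $\gamma$ is, and it runs from $x$ to $x'$. Then define the shrinking map $s \from I_{\infty} \to I_{n}$ by
\[
    s(i) = \max\bigl(0, \min(n, i - N_{-})\bigr),
\]
which is order-preserving, surjective, and a graph map by the same reasoning as in part (1). A case split on whether $i \leq N_{-}$, $N_{-} \leq i \leq N_{+}$, or $i \geq N_{+}$ shows $(\delta \circ s)(i) = \gamma(i)$ in each range, so $\gamma = \delta \circ s$ is a reparametrization of the finite-length path $\delta$.

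There is no real obstacle here; the proof is just a matter of writing down the clamping formulas and checking the three properties (surjective, order-preserving, graph map), together with a short case analysis to identify $\gamma$ with $\delta \circ s$. If anything, the only subtle point is remembering that in part (2) the bounds $N_{-}, N_{+}$ provided by the definition need not be tight, but this causes no issue since any valid choice yields a finite-length $\delta$ to which $\gamma$ reparametrizes.
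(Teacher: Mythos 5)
Your proof is correct and follows essentially the same route as the paper: in both parts you exhibit the same clamping shrinking maps (the paper writes them as piecewise formulas) and in part (2) the same translated finite path $\tilde{\gamma}(i) = \gamma(i+N_{-})$ with $\gamma = \tilde{\gamma} \circ s$. Nothing further is needed.
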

\begin{proof}
    Given a path $\gamma \from x \rightsquigarrow x'$ in $X$ of finite length $n$, we can choose the shrinking map $s \from I_{\infty} \to I_{n}$ given by
    \[
        s\left(i\right) =
        \begin{cases}
            0 & \text{if } i \leq 0 \\
            i & \text{if } 0 \leq i \leq n \\
            n & \text{if } n \leq i
        \end{cases}
    \]
    Then, the path $\gamma \circ s$ is a reparametrization of $\gamma$ having length $\infty$.

    Given a path $\gamma \from x \rightsquigarrow x'$ in $X$ of length $\infty$, there exist integers $N_{-}, N_{+} \in \mathbb{Z}$ such that $\gamma\left(i\right) = x$ for all $i \leq N_{-}$ and $\gamma\left(i\right) = x'$ for all $i \geq N_{+}$.
    Then, letting $n = N_{+} - N_{-}$, we can choose a shrinking map $s \from I_{\infty} \to I_{n}$ and a map $\tilde{\gamma} \from I_{n} \to X$ given by:
    \[
        s\left(i\right) =
        \begin{cases}
            0 & \text{if } i \leq N_{-} \\
            i - N_{-} & \text{if } N_{-} \leq i \leq N_{+} \\
            n & \text{if } N_{+} \leq i
        \end{cases}
    \]
    and
    \[
        \tilde{\gamma}\left(i\right) = \gamma\left(i+N_{-}\right).
    \]
    Note that $\gamma = \tilde{\gamma} \circ s$.
    Hence, $\tilde{\gamma}$ is a reparametrization of $\gamma$ having finite length $n$.
\end{proof}

Given any subset $\mathcal{I} \subseteq \mathbb{N} \cup \left\{\infty\right\}$, we have an equivalence relation $\sim_{\mathcal{I}}$ on the disjoint union $\coprod_{n \in \mathcal{I}}{P_{n}X\left(x,x'\right)}$ that is generated by reparametrizations.
That is, given a path $\gamma \from x \rightsquigarrow x'$ of length $n$ and a shrinking map $s \from I_{m} \to I_{n}$, for $m, n \in \mathcal{I}$, we identify $\gamma \sim_{\mathcal{I}} \gamma \circ s$ and take the smallest equivalence relation containing these pairs.

\begin{definition}\label{def:path-graphs}
  For a graph $X$ and two vertices $x$ and $x'$, the \emph{$\mathcal{I}$-indexed path graph} $P_{\mathcal{I}}X\left(x,x'\right)$ is the quotient of $\coprod_{n \in \mathcal{I}}{P_{n}X\left(x,x'\right)}$ under the equivalence relation defined above.
    \[
        P_{\mathcal{I}}X\left(x,x'\right) = \left(\coprod_{n \in \mathcal{I}}{P_{n}X\left(x,x'\right)}\right)/\sim_{\mathcal{I}}
    \]  
\end{definition}

Suppose we have an inclusion $\mathcal{I} \subseteq \mathcal{J}$, where $\mathcal{I}, \mathcal{J}$ are both subsets of $\mathbb{N} \cup \left\{\infty\right\}$.
Then, given any two paths $\gamma, \sigma \from x \rightsquigarrow x'$ that are identified in $P_{\mathcal{I}}X\left(x,x'\right)$, they are also identified in $P_{\mathcal{J}}X\left(x,x'\right)$.
Thus, the inclusion $\coprod_{n \in \mathcal{I}}{P_{n}X\left(x,x'\right)} \hookrightarrow \coprod_{n \in \mathcal{J}}{P_{n}X\left(x,x'\right)}$ induces a map
\[
\begin{tikzcd}
    P_{\mathcal{I}}X\left(x,x'\right) \arrow[r] & P_{\mathcal{J}}X\left(x,x'\right)
\end{tikzcd}
\]

\begin{proposition}
    The following maps are isomorphisms:
    \[
        P_{\left\{\infty\right\}}X\left(x,x'\right) \longrightarrow P_{\mathbb{N} \cup \left\{\infty\right\}}X\left(x,x'\right)
    \qquad \text{and} \qquad
        P_{\mathbb{N}}X\left(x,x'\right) \longrightarrow P_{\mathbb{N} \cup \left\{\infty\right\}}X\left(x,x'\right)
    \]
\end{proposition}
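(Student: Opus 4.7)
The plan is to show both maps are bijections on vertices and on edges, hence graph isomorphisms. Surjectivity on vertices is immediate from \cref{lem:reparametrization-finite-infinite}: every equivalence class in $P_{\mathbb{N} \cup \{\infty\}}X(x,x')$ contains both a finite-length and an infinite-length representative via reparametrization.

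For injectivity on vertices, I associate to each path its \emph{essential walk}: the finite vertex sequence $v_0, \ldots, v_k$ obtained by deleting consecutive duplicates from $\gamma$'s value sequence (first trimming the eventually constant tails in the infinite case). Reparametrization preserves this data, since a shrinking only inserts additional stationary steps. Conversely, two paths $\gamma, \sigma$ with the same essential walk admit a common reparametrization $\tau = \gamma \circ s_1 = \sigma \circ s_2$, built by letting $\tau$ spend at each $v_i$ at least as much time as either $\gamma$ or $\sigma$ does and reading off the shrinkings $s_1, s_2$ accordingly. Crucially, when $\gamma, \sigma$ are both of length $\infty$ one can arrange $\tau$ also to have length $\infty$; when both are finite, $\tau$ can be taken finite. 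Hence the essential walk yields the same classification of equivalence classes under $\sim_{\mathbb{N}}$, $\sim_{\{\infty\}}$, and $\sim_{\mathbb{N} \cup \{\infty\}}$, giving vertex injectivity.

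For edges, adjacency in $P_m X(x,x')$ is the pointwise condition $\gamma(i) = \sigma(i)$ or $\gamma(i) \sim \sigma(i)$, which is preserved by simultaneously precomposing two paths with a common shrinking. Hence an edge in $P_{\mathbb{N} \cup \{\infty\}}X(x,x')$ realized at a finite length is equally realized at length $\infty$ by pulling back both endpoints along a common shrinking $I_\infty \to I_m$; and an edge realized at length $\infty$ descends to a finite length by taking the common refinement of the two endpoints' constancy partitions on $\mathbb{Z}$ and reading off the induced finite paths. Combined with vertex injectivity, this produces the required edge bijections.

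I expect the main obstacle to be the explicit construction of the common reparametrization $\tau$ in the infinite case: one must coherently match the timings of $\gamma$ and $\sigma$ on both the finite middle portion and the two infinite tails of the essential walk, while ensuring the accompanying shrinkings $I_\infty \to I_\infty$ are simultaneously surjective and order-preserving. Once this construction is nailed down, the remaining steps are formal.
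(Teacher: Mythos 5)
Your argument is correct, but it takes a genuinely different route from the paper's on the injectivity step. The paper obtains surjectivity from \cref{lem:reparametrization-finite-infinite} exactly as you do, but for injectivity it argues directly about the shrinking maps themselves: any shrinking map onto $I_{\infty}$ must have domain $I_{\infty}$, so identifications between paths of length $\infty$ never need to leave the infinite-length part; and any shrinking map $I_{\infty} \to I_{n}$ with $n$ finite can be replaced by a finite shrinking $I_{M} \to I_{n}$ inducing the same identification, so identifications between finite paths never need to pass through infinite ones. You instead build a complete invariant, the essential (reduced) walk, and prove completeness by producing, for two paths of the same type with the same walk, a common reparametrization of that type; this classifies the classes under $\sim_{\mathbb{N}}$, $\sim_{\{\infty\}}$ and $\sim_{\mathbb{N}\cup\{\infty\}}$ simultaneously. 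Your route is longer but buys more: a normal form for classes (the representative of minimal length), and an explicit check that the maps are bijective on \emph{edges}, which is what an isomorphism of these quotient graphs requires and which the paper's proof does not address (it only verifies vertex bijectivity). The step you flag as the main obstacle is indeed routine: since both infinite paths have constant tails at $x$ and $x'$, take $\tau$ to dwell at each letter of the walk for the maximum of the two dwell times and define the shrinkings $I_{\infty} \to I_{\infty}$ to be translations on the far tails and suitably repeating on the finite middle; likewise, for an edge realized at length $\infty$, rather than a ``common refinement of constancy partitions'' it suffices to pick a single interval $[N_{-},N_{+}]$ outside which both adjacent paths are constant and exhibit both as reparametrizations, along the same shrinking $I_{\infty} \to I_{N_{+}-N_{-}}$, of their truncations, which remain pointwise adjacent.
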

\begin{proof}
    We first prove surjectivity and then injectivity of both maps.
    We have the following commutative squares:
    \[
    \begin{tikzcd}
        P_{\infty}X\left(x,x'\right) \arrow[r,hook]  \arrow[d] \arrow[dr,dotted] & \coprod_{n \in \mathbb{N} \cup \left\{\infty\right\}}{P_{n}X\left(x,x'\right)} \arrow[d] \\
        P_{\left\{\infty\right\}}X\left(x,x'\right) \arrow[r] & P_{\mathbb{N} \cup \left\{\infty\right\}}X\left(x,x'\right)
    \end{tikzcd}
    \qquad
    \begin{tikzcd}
        \coprod_{n \in \mathbb{N}}{P_{n}X\left(x,x'\right)} \arrow[r,hook] \arrow[d] \arrow[dr,dotted] & \coprod_{n \in \mathbb{N} \cup \left\{\infty\right\}}{P_{n}X\left(x,x'\right)} \arrow[d] \\
        P_{\mathbb{N}}X\left(x,x'\right) \arrow[r] & P_{\mathbb{N} \cup \left\{\infty\right\}}X\left(x,x'\right)
    \end{tikzcd}
    \]
    By \cref{lem:reparametrization-finite-infinite}, the maps represented by the dotted diagonal arrows in both commutative squares are surjective.
    It follows that the maps $P_{\left\{\infty\right\}}X\left(x,x'\right) \to P_{\mathbb{N} \cup \left\{\infty\right\}}X\left(x,x'\right)$ and $P_{\mathbb{N}}X\left(x,x'\right) \to P_{\mathbb{N} \cup \left\{\infty\right\}}X\left(x,x'\right)$ are both surjective.

    Injectivity of $P_{\left\{\infty\right\}}X\left(x,x'\right) \to P_{\mathbb{N} \cup \left\{\infty\right\}}X\left(x,x'\right)$, follows from the fact that for any shrinking map $s \from I_{m} \to I_{\infty}$ where $m \in \mathbb{N} \cup \left\{\infty\right\}$, we must have $m = \infty$.

    Given any shrinking map $s \from I_{\infty} \to I_{n}$ where $n \in \mathbb{N}$, we have a corresponding shrinking map $\tilde{s} \from I_{M} \to I_{n}$ where $M \in \mathbb{N}$ defined as follows. Let $M_{-} \in \mathbb{Z}$ be such that $s\left(M_{-}\right)0$ and $M_{+} \in \mathbb{Z}$ be such that $s\left(M_{+}\right) = n$.
    Let $M = M_{+} - M_{-}$ and let $\tilde{s} \from I_{M} \to I_{n}$ be given by $\tilde{s}\left(i\right) = s\left(i+M_{-}\right)$. Then, for any path $\gamma \from x \rightsquigarrow x'$ of length $n$, the paths $\gamma \circ s$ and $\gamma \circ \tilde{s}$ are identified in $P_{\mathbb{N} \cup \left\{\infty\right\}}X\left(x,x'\right)$. That is, we can replace any shrinking map of the form $s \from I_{\infty} \to I_{n}$ by a shrinking map of the form $\tilde{s} \from I_{M} \to I_{n}$ where $M \in \mathbb{N}$. Hence, the map $P_{\mathbb{N}}X\left(x,x'\right) \to P_{\mathbb{N} \cup \left\{\infty\right\}}X\left(x,x'\right)$ is injective.
\end{proof}

\begin{lemma} \label{lem:reparametrization-v-path-homotopy}
    Given a path $\gamma \from x \rightsquigarrow x'$ in $X$ of length $\infty$ and a shrinking map $s \from I_{\infty} \to I_{\infty}$, there exists a path-homotopy $\gamma \Rightarrow \gamma \circ s$, or equivalently a path $\gamma \rightsquigarrow \gamma \circ s$ in $P_{\infty}X\left(x,x'\right)$.
\end{lemma}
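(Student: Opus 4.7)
The plan is to build a path-homotopy of the form $H(i,j) = \gamma(t_j(i))$, where $t_0, \ldots, t_n \from I_\infty \to I_\infty$ is a finite sequence of shrinkings with $t_0 = \id$ and $\gamma \circ t_n = \gamma \circ s$. For $H$ to give a valid graph map $I_\infty \gtimes I_n \to X$ whose slices $H(-,j)$ are paths from $x$ to $x'$ of length $\infty$, two conditions suffice: each $t_j$ is itself a shrinking, so that every slice $\gamma \circ t_j$ is eventually constant at $x$ and $x'$ by eventual constancy of $\gamma$; and $|t_j(i) - t_{j-1}(i)| \leq 1$ for every $i$, so that the vertical edges $(i, j-1) \sim (i, j)$ are mapped by $\gamma$ to adjacent or equal vertices.

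The first key step is to replace $s$ by a shrinking with only finitely many stalls. Let $N_-, N_+$ witness the eventual constancy of $\gamma$, and choose $m_-$ maximal with $s(m_-) \leq N_-$ and $m_+$ minimal with $s(m_+) \geq N_+$. Set
\[
    \tilde{s}(i) = \begin{cases} s(m_-) + (i - m_-) & \text{if } i \leq m_-, \\ s(i) & \text{if } m_- \leq i \leq m_+, \\ s(m_+) + (i - m_+) & \text{if } i \geq m_+. \end{cases}
\]
A direct check shows $\tilde{s}$ is a shrinking whose stalls are precisely those of $s$ inside $[m_-, m_+]$, hence finitely many, and $\gamma \circ s = \gamma \circ \tilde{s}$ as functions. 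So it suffices to produce a path-homotopy $\gamma \Rightarrow \gamma \circ \tilde{s}$.

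The second key step is a factorization. By induction on the number of stalls followed by induction on the overall shift, any shrinking $\tilde{s}$ with finitely many stalls can be written as a finite composition $\tilde{s} = \phi_n \circ \cdots \circ \phi_1$ of \emph{elementary pieces}, where each $\phi_r$ is either an elementary shrinking (stalling at a single position and acting as the identity or as a unit shift elsewhere) or a unit translation $i \mapsto i \pm 1$. Setting $t_0 = \id$ and $t_j = \phi_j \circ t_{j-1}$ produces a sequence of shrinkings (compositions of shrinkings are shrinkings) with $t_n = \tilde{s}$; since $|\phi(y) - y| \leq 1$ for every elementary piece $\phi$ and every $y \in \mathbb{Z}$, the unit-step condition $|t_j(i) - t_{j-1}(i)| \leq 1$ holds automatically, and $H(i,j) = \gamma(t_j(i))$ is the desired path-homotopy.

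I expect the main obstacle to be the reduction to $\tilde{s}$: the map $s$ itself may stall at infinitely many points (e.g., $i \mapsto \lfloor i/2 \rfloor$), in which case no finite sequence of unit-step shrinkings can connect $\id$ to $s$ in the sense above, and the naive homotopy $H(i,1) = \gamma(s(i))$ fails because $\gamma(i)$ and $\gamma(s(i))$ need not be adjacent. The surgery producing $\tilde{s}$ exploits the eventual constancy of $\gamma$ to replace $s$ by a function that makes the same composite with $\gamma$ but has bounded displacement from the identity; after that, the factorization into elementary pieces and the two verifications above are routine.
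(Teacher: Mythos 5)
Your proposal is correct and follows essentially the same route as the paper: the paper likewise modifies $s$ outside a finite window (using the eventual constancy of $\gamma$, via points $M_{\pm}$ with $s(M_{\pm})=N_{\pm}$) to obtain a shrinking map $s'$ with $\gamma\circ s'=\gamma\circ s$, and then factors $s'$ as a finite composite of single-stall elementary shrinkings and translations, each of which yields an evident path-homotopy. Your unit-step chain $t_0,\dots,t_n$ with $H(i,j)=\gamma(t_j(i))$ is just the concatenation of those elementary homotopies, so the two arguments coincide in substance.
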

\begin{proof}
    We first consider two special cases before moving on to the general case.
    
    First, suppose the shrinking map $s \from I_{\infty} \to I_{\infty}$ is of the form
    \[
        s\left(i\right) =
        \begin{cases}
            i & \text{if } i \leq k \\
            i-1 & \text{if } i \geq k+1
        \end{cases}
    \]
    for some $k \in \mathbb{Z}$ and let $H \from I_{\infty} \otimes I_{1} \to I_{\infty}$ be given by $H\left(i,0\right) = i$ and $H\left(i,1\right) = s\left(i\right)$.
    Then, $\gamma \circ H$ is a path-homotopy from $\gamma$ to $\gamma \circ s$.

    Next, suppose the shrinking map $s \from I_{\infty} \to I_{\infty}$ is of the form $s\left(i\right) = i+k$ for some $k \in \mathbb{Z}$.
    Let $H \from I_{\infty} \otimes I_{k} \to I_{\infty}$ be given by $H\left(i,j\right) = i+j$, so that $H\left(i,0\right) = i$ and $H\left(i,k\right) = s\left(i\right)$.
    Then, $\gamma \circ H$ is a path-homotopy from $\gamma$ to $\gamma \circ s$.

    We can now consider the general case where the shrinking map $s \from I_{\infty} \to I_{\infty}$ is arbitrary.
    Since $\gamma$ is a path of length $\infty$, there exist $N_{-}, N_{+} \in \mathbb{Z}$ such that $\gamma\left(i\right) = x$ for all $i \leq N_{-}$ and $\gamma\left(i\right) = x'$ for all $i \geq N_{+}$.
    Let $M_{-}, M_{+} \in \mathbb{Z}$ be such that $s\left(M_{-}\right) = N_{-}$ and $s\left(M_{+}\right) = N_{+}$.

    Define $s' \from I_{\infty} \to I_{\infty}$ as follows:
    \[
        s'\left(i\right) =
        \begin{cases}
            i - M_{-} + N_{-} & \text{if } i \leq M_{-} \\
            s\left(i\right) & \text{if } M_{-} \leq i \leq M_{+} \\
            i - M_{+} + N_{+} & \text{if } M_{+} \leq i
        \end{cases}
    \]
    Then, $s'$ is also a shrinking map.
    Furthermore, $s'$ can be obtained as the composite of a finite sequence of shrinking maps of the forms considered in the previous two special cases.
    Thus, we have a path-homotopy $\gamma \Rightarrow \gamma \circ s'$.
    Finally, we observe that $\gamma \circ s' = \gamma \circ s$.
    \qedhere
\end{proof}

\begin{proposition}
    The set-function
    \[
    \begin{tikzcd}
        \pi_{0}P_{\infty}X\left(x,x'\right) \arrow[r] & \pi_{0}P_{\left\{\infty\right\}}X\left(x,x'\right)
    \end{tikzcd}
    \]
    induced by the quotient map $P_{\infty}X\left(x,x'\right) \to P_{\left\{\infty\right\}}X\left(x,x'\right)$ is a bijection.
\end{proposition}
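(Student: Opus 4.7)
The plan is to prove the surjectivity and injectivity of the induced map on $\pi_0$ separately, with \cref{lem:reparametrization-v-path-homotopy} doing the main work on the injectivity side. Surjectivity is essentially immediate: the quotient map $q \from P_{\infty} X(x,x') \to P_{\{\infty\}} X(x,x')$ is by construction surjective on vertex sets, and it follows that the induced map $\pi_0 q$ on path-components is surjective.

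For injectivity, the first step I would carry out is an auxiliary claim: any two representatives of a single equivalence class $[\gamma]$ in $P_{\{\infty\}} X(x,x')$ lie in the same path-component of $P_{\infty} X(x,x')$. Since $\sim_{\{\infty\}}$ is by definition the equivalence relation generated by identifications $\gamma \sim \gamma \circ s$ for shrinking maps $s \from I_{\infty} \to I_{\infty}$, and \cref{lem:reparametrization-v-path-homotopy} supplies an explicit path in $P_{\infty} X(x,x')$ between $\gamma$ and $\gamma \circ s$, the claim follows by concatenating along the chain of reparametrizations witnessing the equivalence.

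Given this, the injectivity argument proceeds as follows. Suppose $q(\gamma)$ and $q(\sigma)$ are path-connected in $P_{\{\infty\}} X(x,x')$, witnessed by a sequence $q(\gamma) = [\gamma_0], [\gamma_1], \ldots, [\gamma_k] = q(\sigma)$ in which consecutive vertices are either equal or adjacent in the quotient graph. Each adjacency lifts, by the construction of the quotient graph, to an adjacency between \emph{some} representatives in $P_{\infty} X(x,x')$; applying the auxiliary claim to interpose paths inside each equivalence class between the representative supplied by one edge and the one supplied by the next, I then concatenate everything into a single path from $\gamma$ to $\sigma$ in $P_{\infty} X(x,x')$.

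The main obstacle I expect is precisely this last bookkeeping step: the representatives witnessing consecutive quotient-edges generally differ, so reparametrization paths inside each class must be inserted at every junction, and one must verify that the quotient graph really has the adjacency structure that allows edges to be lifted to adjacent (rather than merely path-connected) representatives upstairs. Once the auxiliary claim is in hand the concatenation itself is routine.
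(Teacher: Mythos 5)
Your proposal is correct and follows essentially the same route as the paper: surjectivity is immediate from surjectivity of the quotient on vertices, and injectivity is proved by lifting each quotient edge to an edge between some representatives in $P_{\infty}X\left(x,x'\right)$ and then using \cref{lem:reparametrization-v-path-homotopy} (together with reversal and concatenation of paths) to connect different representatives of the same class. The only cosmetic difference is that the paper first reduces to a single edge in the quotient graph, whereas you concatenate along the whole chain; your worry about lifting quotient edges is resolved exactly as you suspect, since edges in the quotient are by construction images of edges upstairs.
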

\begin{proof}
    We need to check that whenever two vertices in $P_{\left\{\infty\right\}}X\left(x,x'\right)$ are path-connected, their preimages in $P_{\infty}X\left(x,x'\right)$ are also path-connected.
    It suffices to consider vertices connected by an edge in $P_{\left\{\infty\right\}}X\left(x,x'\right)$.
    
    Let $\left[\gamma\right] \sim \left[\sigma\right]$ be an edge in $P_{\left\{\infty\right\}}X\left(x,x'\right)$.
    Then, there exists an edge $\tilde{\gamma} \sim \tilde{\sigma}$ in $P_{\infty}X\left(x,x'\right)$ such that $\left[\gamma\right] = \left[\tilde{\gamma}\right]$ and $\left[\sigma\right] = \left[\tilde{\sigma}\right]$.
    By \cref{lem:reparametrization-v-path-homotopy}, we have paths $\gamma \rightsquigarrow \tilde{\gamma}$ and $\sigma \rightsquigarrow \tilde{\sigma}$ in $P_{\infty}X\left(x,x'\right)$.
    Thus, there is a path $\gamma \rightsquigarrow \sigma$ in $P_{\infty}X\left(x,x'\right)$.
\end{proof}

This allows us to use the path graph $P_{\left\{\infty\right\}}X\left(x,x'\right)$, or indeed the path graph $P_{\mathbb{N}}X\left(x,x'\right)$ which is isomorphic to it, in place of $P_{\infty}X\left(x,x'\right)$, as long as we are only concerned with the set of path-connected components of these graphs.
We expect that each of these models will be useful in different contexts.
For instance, the path graph $P_{\mathbb{N}}X\left(x,x'\right)$ will come in handy when we need to use induction arguments on path-lengths.

Next, we can define loop graphs which, just like path graphs, are indexed by a subset $\mathcal{I} \subseteq \mathbb{N} \cup \infty$.
In the context of $\times$-homotopy theory, a similar definition can be found in \cite[Def.~4.2]{dochtermann:homotopy-groups}.
The remainder of this section develops properties of the loop graphs, several of which have analogues in the context of $\times$-homotopy theory \cite[\S4]{dochtermann:homotopy-groups}.

\begin{definition}
    For a pointed graph $\left(X,x_{0}\right)$ and a non-empty subset $\mathcal{I} \subseteq \mathbb{N} \cup \left\{\infty\right\}$, the $\mathcal{I}$-indexed loop graph $\Omega_{\mathcal{I}}\left(X,x_{0}\right)$ is given by $P_{\mathcal{I}}X\left(x_{0},x_{0}\right)$.
    It has a distinguished vertex given by the equivalence class of the constant path at $x_{0}$.
    This gives an endofunctor $\Omega_{\mathcal{I}} \from \Graph_{*} \to \Graph_{*}$.
\end{definition}

Iterating the endofunctor $\Omega_{\mathcal{I}} \from \Graph_{*} \to \Graph_{*}$, we get:
\[
    \Omega_{\mathcal{I}}^{d} \from \Graph_{*} \to \Graph_{*}, \qquad \text{for } d \in \mathbb{N}.
\]
Suppose we have an inclusion $\mathcal{I} \subseteq \mathcal{J}$, where $\mathcal{I}$, $\mathcal{J}$ are both non-empty subsets of $\mathbb{N} \cup \left\{\infty\right\}$.
Then, we have natural transformations
\[
    \Omega_{\mathcal{I}}^{d} \Rightarrow \Omega_{\mathcal{J}}^{d}, \qquad \text{for } d \in \mathbb{N}.
\]

\begin{proposition}
    For every pointed graph $\left(X,x_{0}\right)$, the following maps are isomorphisms:
    \[
        \pushQED{\qed}
        \Omega_{\mathbb{N}}^{d}\left(X,x_{0}\right) \to \Omega_{\mathbb{N} \cup \left\{\infty\right\}}^{d}\left(X,x_{0}\right) \qquad \text{and} \qquad \Omega_{\left\{\infty\right\}}^{d}\left(X,x_{0}\right) \to \Omega_{\mathbb{N} \cup \left\{\infty\right\}}^{d}\left(X,x_{0}\right).
        \qedhere
        \popQED
    \]
\end{proposition}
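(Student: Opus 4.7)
The plan is to induct on $d$, reducing the statement to the preceding proposition (which handles the case of a single loop) via functoriality of $\Omega_{\mathcal{J}}$ and horizontal composition of natural transformations. The base case $d = 0$ is trivial, since $\Omega_{\mathcal{I}}^{0} = \Omega_{\mathcal{J}}^{0} = \mathrm{id}_{\Graph_*}$. For $d = 1$, the map $\Omega_{\mathcal{I}}(X, x_0) \to \Omega_{\mathcal{J}}(X, x_0)$ is by definition the map $P_{\mathcal{I}} X(x_0, x_0) \to P_{\mathcal{J}} X(x_0, x_0)$, which the preceding proposition (applied with $x = x' = x_0$) identifies as an isomorphism in both cases of interest, namely $\{\mathcal{I}, \mathcal{J}\} = \{\mathbb{N}, \mathbb{N} \cup \{\infty\}\}$ and $\{\mathcal{I}, \mathcal{J}\} = \{\{\infty\}, \mathbb{N} \cup \{\infty\}\}$.

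For the inductive step, assume the claim for $d - 1$. I would factor the component of $\Omega_{\mathcal{I}}^{d} \Rightarrow \Omega_{\mathcal{J}}^{d}$ at $(X, x_0)$ as the Godement-style composite
\[
\Omega_{\mathcal{I}}\bigl(\Omega_{\mathcal{I}}^{d-1}(X, x_0)\bigr) \longrightarrow \Omega_{\mathcal{J}}\bigl(\Omega_{\mathcal{I}}^{d-1}(X, x_0)\bigr) \longrightarrow \Omega_{\mathcal{J}}\bigl(\Omega_{\mathcal{J}}^{d-1}(X, x_0)\bigr),
\]
where the first arrow is the $d = 1$ natural transformation evaluated at the pointed graph $\Omega_{\mathcal{I}}^{d-1}(X, x_0)$, and the second is $\Omega_{\mathcal{J}}$ applied to the inductive isomorphism $\Omega_{\mathcal{I}}^{d-1}(X, x_0) \to \Omega_{\mathcal{J}}^{d-1}(X, x_0)$. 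The first arrow is an isomorphism by the base case applied at a different pointed graph; the second is an isomorphism because any functor, in particular $\Omega_{\mathcal{J}}$, preserves isomorphisms. Composing completes the induction.

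The main obstacle, such as it is, will be checking that this two-step composite really is the $(X, x_0)$-component of the natural transformation $\Omega_{\mathcal{I}}^{d} \Rightarrow \Omega_{\mathcal{J}}^{d}$ as defined implicitly in the paper by iteration of $\Omega_{\mathcal{I}} \Rightarrow \Omega_{\mathcal{J}}$. This is the standard decomposition of a horizontal composition of natural transformations into two successive whiskerings, and it amounts to invoking the naturality square of the underlying transformation $\Omega_{\mathcal{I}} \Rightarrow \Omega_{\mathcal{J}}$ at the component of the inductive isomorphism; no new technical input beyond the previous proposition is required.
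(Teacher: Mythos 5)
Your argument is correct, and it is precisely the routine justification the paper leaves implicit by stating this proposition with only a \qed: the preceding proposition gives the $d=1$ case at every pointed graph (taking $x=x'=x_{0}$), and the iterated transformation $\Omega_{\mathcal{I}}^{d}\Rightarrow\Omega_{\mathcal{J}}^{d}$ factors, by whiskering, as an instance of the $d=1$ isomorphism followed by $\Omega_{\mathcal{J}}$ applied to the inductive isomorphism, so the component at $\left(X,x_{0}\right)$ is an isomorphism. No gap; this matches the intended proof.
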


\begin{notation}
    Given a vector $\mathbf{n} = \left(n_{1},\ldots,n_{d}\right) \in \left(\mathbb{N} \cup \left\{\infty\right\}\right)^{d}$, let $I_{\mathbf{n}}$ denote the graph $I_{n_{1}} \gtimes \cdots \gtimes I_{n_{d}}$, and $\mathbf{n} \setminus n_{i}$ denote the vector $\left(n_{1},\ldots,\hat{n_{i}},\ldots,n_{d}\right) \in \left(\mathbb{N} \cup \left\{\infty\right\}\right)^{d-1}$.
    Let $\Omega_{\mathbf{n}} \from \Graph_{*} \to \Graph_{*}$ denote the endofunctor obtained as the composite $\Omega_{n_{d}} \circ \cdots \circ \Omega_{n_{1}}$.
\end{notation}

\begin{proposition}
    For any pointed graph $\left(X,x_{0}\right)$ and vector $\mathbf{n} = \left(n_{1},\ldots,n_{d}\right) \in \left(\mathbb{N} \cup \left\{\infty\right\}\right)^{d}$, the graph $\Omega_{\mathbf{n}}\left(X,x_{0}\right)$ is isomorphic to the full subgraph of $\ghom{I_{\mathbf{n}}}{X}$ with vertices given by maps $f \from I_{\mathbf{n}} \to X$ for which each component
    \[
        f_{i} \from I_{n_{i}} \to \ghom{I_{\mathbf{n}\setminus n_{i}}}{X}, \qquad \text{for } i = 1, \ldots, d
    \]
    given by
    \[
        t \mapsto \left( \left(t_{1},\ldots,\hat{t_{i}},\ldots,t_{d}\right) \mapsto f\left(t_{1},\ldots,t,\ldots,t_{d}\right) \right)
    \]
    is a path of length $n_{i}$ from $c_{x_{0}}$ to $c_{x_{0}}$, where $c_{x_{0}}$ is the constant map at $x_{0}$. \qed
\end{proposition}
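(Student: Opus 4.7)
The plan is to proceed by induction on the number $d$ of iterated loop functors. The base case $d = 1$ is immediate from the definition $\Omega_{n_1}(X, x_0) = P_{n_1} X(x_0, x_0)$: when $d = 1$ we have $I_{\mathbf{n}} = I_{n_1}$, the unique component $f_1$ coincides with $f$ itself, and the loop-condition on $f_1$ is literally the condition that $f$ is a path from $x_0$ to itself.

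For the inductive step, I would write $\mathbf{m} = \mathbf{n} \setminus n_d = (n_1, \ldots, n_{d-1})$. By construction, $\Omega_{\mathbf{n}}(X, x_0) = \Omega_{n_d}\bigl(\Omega_{\mathbf{m}}(X, x_0), c_{x_0}\bigr)$ is the full subgraph of $\ghom{I_{n_d}}{\Omega_{\mathbf{m}}(X, x_0)}$ on paths of length $n_d$ from $c_{x_0}$ to $c_{x_0}$. The inductive hypothesis realizes $\Omega_{\mathbf{m}}(X, x_0)$ as a full subgraph of $\ghom{I_{\mathbf{m}}}{X}$, and applying $\ghom{I_{n_d}}{-}$ together with the closed-monoidal adjunction $\ghom{I_{n_d}}{\ghom{I_{\mathbf{m}}}{X}} \cong \ghom{I_{\mathbf{n}}}{X}$ from \cref{prop:graph-closed-monoidal} identifies $\Omega_{\mathbf{n}}(X, x_0)$ with a full subgraph of $\ghom{I_{\mathbf{n}}}{X}$. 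Under this composite identification, a vertex $g \from I_{n_d} \to \ghom{I_{\mathbf{m}}}{X}$ corresponds to $f \from I_{\mathbf{n}} \to X$ via $g(t_d)(t_1, \ldots, t_{d-1}) = f(t_1, \ldots, t_d)$.

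It then remains to match the two defining conditions on $g$ with the claimed conditions on $f$. The condition that each $g(t_d)$ lies in $\Omega_{\mathbf{m}}(X, x_0)$ asserts, by the inductive hypothesis, that for every $t_d$ and every $j < d$ the $j$-th component of the slice $f(-, \ldots, -, t_d)$ is a path at $c_{x_0}$; exchanging quantifiers, this is exactly the statement that the global component $f_j \from I_{n_j} \to \ghom{I_{\mathbf{n} \setminus n_j}}{X}$ is a path at $c_{x_0}$ of length $n_j$. The second condition, that $g$ itself is a path at $c_{x_0}$ of length $n_d$, translates literally into the same statement for $f_d$. Since an isomorphism of graphs restricts to an isomorphism of full subgraphs cut out by corresponding vertex sets, combining the two conditions yields the claimed isomorphism.

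The main subtlety I anticipate is the infinite-length case, where one must verify that the \emph{eventually constant outside $[N_-, N_+]$} conditions transfer uniformly. A priori, the stabilization bounds of the $j$-th component of each slice $g(t_d)$ could depend on $t_d$; however, since $g$ is itself eventually $c_{x_0}$, the slice $f(-, \ldots, -, t_d)$ equals the constant map at $x_0$ outside a finite range of $t_d$-values, and the bounds over this finite range can be combined by taking a maximum to produce global stabilization bounds that witness $f_j$ as a path of length $n_j$ in the sense of the earlier definitions.
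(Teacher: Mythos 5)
Your proof is correct; the paper itself omits the argument entirely (the proposition is stated with a tombstone as a routine verification), so there is no competing proof to diverge from. Your induction on $d$ via the internal-hom isomorphism $\ghom{I_{n_d}}{\ghom{I_{\mathbf{m}}}{X}} \cong \ghom{I_{\mathbf{n}}}{X}$ of \cref{prop:graph-closed-monoidal}, together with the observation that the slicewise stabilization bounds can be made uniform because $g$ is eventually the constant map $c_{x_0}$ in the $t_d$-direction, is precisely the verification the paper leaves to the reader, including the one genuine subtlety (the quantifier exchange when both $n_j$ and $n_d$ are infinite).
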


Given any subset $\mathcal{I} \subseteq \mathbb{N} \cup \left\{\infty\right\}$ and $d \in \mathbb{N}$, we have an equivalence relation $\sim_{\mathcal{I}^{d}}$ on the disjoint union $\coprod_{\mathbf{n} \in \mathcal{I}^{d}}{\Omega_{\mathbf{n}}\left(X,x_{0}\right)}$ that is generated by (higher-order) reparametrizations.
That is, given an element $f \in \Omega_{\mathbf{n}}\left(X,x_{0}\right)$ and shrinking maps $s_{i} \from I_{m_{i}} \to I_{n_{i}}$, $i = 1, \ldots, d$, for $\mathbf{m}, \mathbf{n} \in \mathcal{I}^{d}$, we identify $f \sim_{\mathcal{I}^{d}} f \circ \left(s_{1} \gtimes \cdots s_{d}\right)$ and take the smallest equivalence relation containing these pairs.

\begin{proposition}
    For any pointed graph $\left(X,x_{0}\right)$, non-empty subset $\mathcal{I} \subseteq \mathbb{N} \cup \left\{\infty\right\}$ and $d \in 
    \mathbb{N}$, we have:
    \[
        \pushQED{\qed}
        \Omega_{\mathcal{I}}^{d}\left(X,x_{0}\right) \cong \left(\coprod_{\mathbf{n} \in \mathcal{I}^{d}}{\Omega_{\mathbf{n}}\left(X,x_{0}\right)}\right) / \sim_{\mathcal{I}^{d}}.
        \qedhere
        \popQED
    \]
\end{proposition}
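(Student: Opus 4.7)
The plan is to argue by induction on $d$. The base case $d=1$ is immediate: by definition $\Omega_{\mathcal{I}}(X, x_0) = P_{\mathcal{I}}X(x_0, x_0) = \left(\coprod_{n \in \mathcal{I}} P_n X(x_0, x_0)\right)/\sim_{\mathcal{I}}$, which is the desired presentation with $\mathbf{n} \in \mathcal{I}^1$. For the inductive step, the strategy is to build a natural graph map
\[
\Phi \from \coprod_{\mathbf{n} \in \mathcal{I}^d} \Omega_{\mathbf{n}}(X, x_0) \to \Omega_{\mathcal{I}}^d(X, x_0)
\]
by composing, in each coordinate, the stage-wise quotient map $\Omega_{n_i} \to \Omega_{\mathcal{I}}$ (arising from the inclusion $\{n_i\} \subseteq \mathcal{I}$ via the natural transformation recorded earlier), and then to verify that $\Phi$ factors through $\sim_{\mathcal{I}^d}$ to induce a bijection on both vertices and edges (the argument for edges is essentially identical to that for vertices, using that an edge in $\Omega_{\mathbf{n}}$ is a map $I_{\mathbf{n}} \gtimes I_1 \to X$ of the analogous form).

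Well-definedness and injectivity come from the fact that shrinking maps in different coordinates commute: a reparametrization of $f \in \Omega_{\mathbf{n}}$ by $s_1 \gtimes \cdots \gtimes s_d$ can be applied one coordinate at a time, and each one-coordinate step is precisely what is absorbed by the stage-wise quotient $P_{n_i} \to P_{\mathcal{I}}$ in the $i$-th application of $\Omega_{\mathcal{I}}$. Conversely, if $\Phi(f) = \Phi(g)$, the stage-wise equalities witness a zigzag of one-coordinate reparametrizations, each of which embeds into $\sim_{\mathcal{I}^d}$ by taking identities in the remaining coordinates. Surjectivity is the main content: unfolding $\Omega_{\mathcal{I}}^d = \Omega_{\mathcal{I}}(\Omega_{\mathcal{I}}^{d-1})$, a vertex of $\Omega_{\mathcal{I}}^d(X, x_0)$ is represented by a loop $\gamma \from I_{n_d} \to \Omega_{\mathcal{I}}^{d-1}(X, x_0)$, and the task reduces to producing a single shape $\mathbf{m} \in \mathcal{I}^{d-1}$ together with a lift $\tilde\gamma \from I_{n_d} \to \Omega_{\mathbf{m}}(X, x_0)$, which then assembles into an element of $\Omega_{(\mathbf{m}, n_d)}(X, x_0)$ mapping to $[\gamma]$ under $\Phi$.

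The hard part is this uniformization: each $\gamma(j)$ is by inductive hypothesis a $\sim_{\mathcal{I}^{d-1}}$-equivalence class with representatives possibly living in different $\Omega_{\mathbf{m}^{(j)}}$, and each edge $\gamma(j) \sim \gamma(j+1)$ lifts to an adjacency of representatives in some $\Omega_{\mathbf{m}^{(j,j+1)}}$ a priori depending on $j$. The key move is to choose a common shape $\mathbf{m} \in \mathcal{I}^{d-1}$ that dominates every $\mathbf{m}^{(j,j+1)}$ componentwise (taking $\infty$ in any coordinate where some $\mathbf{m}^{(j,j+1)}$ has $\infty$, and otherwise a componentwise maximum lying in $\mathcal{I}$), and then to precompose each local lift with appropriate shrinking maps so that every representative is transported into $\Omega_{\mathbf{m}}(X, x_0)$ while adjacency is preserved. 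The two representatives of the shared endpoint $\gamma(j+1)$ coming from the two edges $(j,j+1)$ and $(j+1,j+2)$ are $\sim_{\mathcal{I}^{d-1}}$-equivalent, and their discrepancy is absorbed by further enlarging $\mathbf{m}$ to dominate all intermediate shapes appearing in a connecting zigzag; iterating along the finitely many vertices of $I_{n_d}$ (finitely many nonconstant ones, when $n_d = \infty$) yields the desired uniform lift $\tilde\gamma$.
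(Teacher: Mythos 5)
Your overall strategy (induct on $d$, compare through the map $\Phi$, and isolate the ``uniformization'' of slice shapes as the real content) is sensible, and the paper itself records this proposition without a written proof, so there is nothing to match it against; but as written your argument has a genuine gap exactly at the step you call the hard part. To glue the partial lift to the next edge's lift you need more than a shape $\mathbf{m}\in\mathcal{I}^{d-1}$ that dominates all shapes in sight: you need shrinking maps out of $I_{\mathbf{m}}$ under which the two representatives of the shared vertex become \emph{equal} after precomposition. That is a confluence statement about the reparametrization relation: a zigzag of reparametrizations within $\mathcal{I}$ must be coequalizable by a single common reparametrization whose shape again lies in $\mathcal{I}$. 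This needs (i) a lemma that every cospan of shrinking maps $I_a\to I_q\leftarrow I_b$ completes to a commuting square of shrinking maps (a common-refinement/staircase argument, including the $I_\infty$ cases), which you neither state nor prove, and (ii) closure of $\mathcal{I}$ under the shapes such refinements produce. Point (ii) fails in the stated generality of an arbitrary non-empty $\mathcal{I}$: take $\mathcal{I}=\{3,7\}$ in the inner direction; the only shrinking map $I_7\to I_7$ is the identity, so two distinct shape-$7$ reparametrizations of one nonconstant shape-$3$ loop are $\sim_{\mathcal{I}}$-equivalent yet admit no common refinement inside $\mathcal{I}$, and ``further enlarging $\mathbf{m}$ to dominate the zigzag'' is impossible because $7$ is already maximal. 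So your mechanism only works when $\mathcal{I}$ is closed under the needed refinements (e.g.\ $\mathbb{N}$, $\{\infty\}$, $\mathbb{N}\cup\{\infty\}$, the cases the paper actually uses), and even there the confluence lemma has to be supplied and the sequential refine-and-transport bookkeeping carried out.

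The injectivity half has the same problem in disguise and is dismissed too quickly. Knowing $\Phi(f)=\Phi(g)$ only says that the induced loops valued in the quotient $\Omega_{\mathcal{I}}^{d-1}(X,x_0)$ agree up to outer reparametrization, i.e.\ the inner slices are $\sim_{\mathcal{I}^{d-1}}$-equivalent slice by slice, with no uniformity in the outer parameter; by contrast $\sim_{\mathcal{I}^{d}}$ only permits box products $s_1\gtimes\cdots\gtimes s_d$ of shrinking maps applied uniformly across all slices. Converting slice-wise equivalences into such a uniform zigzag is a second uniformization problem of the same difficulty as surjectivity, and ``the stage-wise equalities witness a zigzag of one-coordinate reparametrizations'' does not address it. To repair the proof: state and prove the confluence lemma for shrinking maps, either restrict $\mathcal{I}$ to the directed cases above or give a separate argument for general $\mathcal{I}$, and then run the refinement argument explicitly for both surjectivity and injectivity (and for edges as well as vertices).
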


\begin{lemma} \label{lem:paths-in-Omega-reparametrization}
    Given $f \in \Omega_{\infty}^{d}\left(X,x_{0}\right)$ and shrinking maps $s_{1}, \ldots, s_{d} \from I_{\infty} \to I_{\infty}$, there exists a path $f \rightsquigarrow f \circ \left(s_{1} \gtimes \cdots \gtimes s_{d}\right)$ in $\Omega_{\infty}^{d}\left(X,x_{0}\right)$.
\end{lemma}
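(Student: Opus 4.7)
The plan is to reduce to the one-dimensional Lemma~\ref{lem:reparametrization-v-path-homotopy} by handling the coordinates one at a time. For $i = 1, \ldots, d$, write $\tau_{i} \from \gexp{I_{\infty}}{d} \to \gexp{I_{\infty}}{d}$ for the axis-aligned endomorphism obtained by applying $s_{i}$ in the $i$-th tensor factor and the identity in all others, so that $s_{1} \gtimes \cdots \gtimes s_{d} = \tau_{1} \circ \cdots \circ \tau_{d}$. By concatenating paths it suffices to establish the following auxiliary claim: for every $g \in \Omega_{\infty}^{d}\left(X, x_{0}\right)$, every index $i \in \{1, \ldots, d\}$, and every shrinking map $s \from I_{\infty} \to I_{\infty}$, there is a path $g \rightsquigarrow g \circ \tau$ in $\Omega_{\infty}^{d}\left(X, x_{0}\right)$, where $\tau$ is the axis-aligned endomorphism applying $s$ in the $i$-th coordinate.

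The characterization of $\Omega_{\infty}^{d}\left(X, x_{0}\right)$ as a full subgraph of $\ghom{\gexp{I_{\infty}}{d}}{X}$ from the preceding proposition is symmetric under permutations of the $d$ coordinates, so pre-composing $g$ with the swap of coordinates $i$ and $d$ induces an automorphism of $\Omega_{\infty}^{d}\left(X, x_{0}\right)$ that reduces the auxiliary claim to the case $i = d$. In that case, the iterated definition unfolds to the identification $\Omega_{\infty}^{d}\left(X, x_{0}\right) \iso P_{\infty}\Omega_{\infty}^{d-1}\left(X, x_{0}\right)\left(c_{x_{0}}, c_{x_{0}}\right)$, under which $g$ becomes a path of length $\infty$ in the graph $\Omega_{\infty}^{d-1}\left(X, x_{0}\right)$ from the constant loop $c_{x_{0}}$ to itself. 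Applying Lemma~\ref{lem:reparametrization-v-path-homotopy} \emph{with the ambient graph taken to be} $\Omega_{\infty}^{d-1}\left(X, x_{0}\right)$ to this path and the shrinking map $s$ then produces a path $g \rightsquigarrow g \circ s$ inside $P_{\infty}\Omega_{\infty}^{d-1}\left(X, x_{0}\right)\left(c_{x_{0}}, c_{x_{0}}\right) = \Omega_{\infty}^{d}\left(X, x_{0}\right)$, which under the identification is exactly the desired path $g \rightsquigarrow g \circ \tau$.

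The main subtlety to verify is that Lemma~\ref{lem:reparametrization-v-path-homotopy}, though stated for a generic graph denoted $X$ there, applies legitimately with that graph replaced by $\Omega_{\infty}^{d-1}\left(X, x_{0}\right)$. This is immediate from the structure of the proof already given: the witnessing path-homotopy was constructed as $\gamma \circ H$ for a map $H \from I_{\infty} \gtimes I_{n} \to I_{\infty}$ whose definition uses no property of the ambient graph. Since $\Omega_{\infty}^{d}\left(X, x_{0}\right)$ is \emph{by definition} the full subgraph $P_{\infty}\Omega_{\infty}^{d-1}\left(X, x_{0}\right)\left(c_{x_{0}}, c_{x_{0}}\right)$ of $\ghom{I_{\infty}}{\Omega_{\infty}^{d-1}\left(X, x_{0}\right)}$, the resulting path automatically lies in $\Omega_{\infty}^{d}\left(X, x_{0}\right)$, with no separate verification of the iterated-loop condition at intermediate vertices needed.
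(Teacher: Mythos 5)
Your proof is correct and follows essentially the same route as the paper: factor $s_{1} \gtimes \cdots \gtimes s_{d}$ into single-coordinate maps and reduce each of these to \cref{lem:reparametrization-v-path-homotopy}. The only difference is bookkeeping: the paper applies that lemma to the curried map $f_{i} \from I_{\infty} \to \ghom{\gexp{I_{\infty}}{d-1}}{X}$ directly for each coordinate $i$, whereas you use a coordinate-swap symmetry together with the identification $\Omega_{\infty}^{d}\left(X,x_{0}\right) \cong P_{\infty}\Omega_{\infty}^{d-1}\left(X,x_{0}\right)\left(c_{x_{0}},c_{x_{0}}\right)$, which has the minor advantage of making it automatic that the intermediate vertices of the homotopy lie in $\Omega_{\infty}^{d}\left(X,x_{0}\right)$.
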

\begin{proof}
    Note that $\left(s_{1} \gtimes \cdots \gtimes s_{d}\right)$ equals the following composite:
    \[
        \left(s_{1} \gtimes \text{id} \gtimes \cdots \gtimes \text{id} \right) \circ \left(\text{id} \gtimes s_{2} \gtimes \text{id} \gtimes \cdots \gtimes \text{id} \right) \circ \cdots \circ \left(\text{id} \gtimes \cdots \gtimes \text{id} \gtimes s_{d} \right)
    \]
    Thus, it suffices to prove that there exists a path $f \rightsquigarrow f \circ \left(\text{id} \gtimes \cdots \gtimes s_{i} \gtimes \cdots \gtimes \text{id} \right)$ for any $f$ and any $i = 1, \ldots, d$.

    Let $f_{i} \from I_{\infty} \to \ghom{\gexp{I_{\infty}}{d-1}}{X}$ be given by
    \[
        t \mapsto \left(\left(t_{1}, \ldots, \hat{t_{i}},\ldots,t_{d}\right) \mapsto f\left(t_{1},\ldots,t,\ldots,t_{d}\right) \right).
    \]
    By \cref{lem:reparametrization-v-path-homotopy}, there is a path-homotopy $f_{i} \Rightarrow f_{i} \circ s_{i}$, or equivalently a path $f \rightsquigarrow f \circ \left(\text{id} \gtimes \cdots \gtimes s_{i} \gtimes \cdots \gtimes \text{id} \right)$ in $\Omega_{\infty}^{d}\left(X,x_{0}\right)$.
\end{proof}

\begin{proposition} \label{quotient-by-shrinking-is-iso}
    The set-function
    \[
    \begin{tikzcd}
        \pi_{0}\Omega_{\infty}^{d}\left(X,x_{0}\right) \arrow[r] & \pi_{0}\Omega_{\left\{\infty\right\}}^{d}\left(X,x_{0}\right)
    \end{tikzcd}
    \]
    induced by the quotient map $\Omega_{\infty}^{d}\left(X,x_{0}\right) \to \Omega_{\left\{\infty\right\}}^{d}\left(X,x_{0}\right)$ is a bijection.
\end{proposition}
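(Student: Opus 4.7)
The plan is to adapt, essentially verbatim, the argument used in the earlier proposition showing $\pi_0 P_\infty X(x,x') \to \pi_0 P_{\{\infty\}} X(x,x')$ is a bijection, with \cref{lem:paths-in-Omega-reparametrization} taking the role that \cref{lem:reparametrization-v-path-homotopy} played there. Since $\Omega_{\{\infty\}}^d(X,x_0)$ is, by the preceding proposition, precisely the quotient of $\Omega_\infty^d(X,x_0)$ by the equivalence relation generated by the reparametrization identifications $f \sim f \circ (s_1 \gtimes \cdots \gtimes s_d)$, the quotient map is surjective on vertices and hence induces a surjection on $\pi_0$.

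For injectivity, I would argue that it suffices to check that whenever $[f] \sim [g]$ is an edge in $\Omega_{\{\infty\}}^d(X,x_0)$, the preimages $f$ and $g$ lie in the same path-component of $\Omega_\infty^d(X,x_0)$. By the definition of the quotient graph, such an edge is the image of some edge $\tilde f \sim \tilde g$ in $\Omega_\infty^d(X,x_0)$, where $\tilde f \sim_{\{\infty\}^d} f$ and $\tilde g \sim_{\{\infty\}^d} g$. Hence the proof reduces to showing that any two vertices of $\Omega_\infty^d(X,x_0)$ identified under $\sim_{\{\infty\}^d}$ are path-connected in $\Omega_\infty^d(X,x_0)$.

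To see the latter, unfold the equivalence $\tilde f \sim_{\{\infty\}^d} f$ as a finite zig-zag of generating relations: there is a sequence $f = f_0, f_1, \dots, f_k = \tilde f$ such that for each $j$ either $f_{j+1} = f_j \circ (s_1^{(j)} \gtimes \cdots \gtimes s_d^{(j)})$ or $f_j = f_{j+1} \circ (s_1^{(j)} \gtimes \cdots \gtimes s_d^{(j)})$ for some shrinking maps $s_i^{(j)} \from I_\infty \to I_\infty$. By \cref{lem:paths-in-Omega-reparametrization}, each such step produces a path between $f_j$ and $f_{j+1}$ in $\Omega_\infty^d(X,x_0)$ (using \cref{def:inverse-path} to reverse direction when necessary). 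Concatenating these paths yields a path $f \rightsquigarrow \tilde f$; similarly $g \rightsquigarrow \tilde g$. Combined with the edge $\tilde f \sim \tilde g$, this produces the desired path $f \rightsquigarrow g$ in $\Omega_\infty^d(X,x_0)$.

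The only substantive point, and essentially the only place where the higher-dimensional character is used, is the appeal to \cref{lem:paths-in-Omega-reparametrization}; everything else is formal. So there is no real obstacle here beyond carefully recording the zig-zag argument, which is why I expect the proof to be as short as the one-dimensional analogue.
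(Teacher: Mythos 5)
Your proposal is correct and follows essentially the same route as the paper: reduce injectivity to vertices joined by an edge in $\Omega_{\left\{\infty\right\}}^{d}\left(X,x_{0}\right)$, lift that edge to an edge in $\Omega_{\infty}^{d}\left(X,x_{0}\right)$, and connect the lifted endpoints to $f$ and $g$ using \cref{lem:paths-in-Omega-reparametrization}. Your explicit zig-zag unfolding of the equivalence relation generated by reparametrization is just a more careful spelling-out of the step the paper compresses into ``we have paths $f \rightsquigarrow F$ and $g \rightsquigarrow G$.''
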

\begin{proof}
    We need to check that whenever two vertices in $\Omega_{\left\{\infty\right\}}^{d}\left(X,x_{0}\right)$ are path-connected, their preimages in $\Omega_{\infty}^{d}\left(X,x_{0}\right)$ are also path-connected.
    It suffices to consider vertices connected by an edge in $\Omega_{\left\{\infty\right\}}^{d}\left(X,x_{0}\right)$.

    Let $\left[f\right] \sim \left[g\right]$ be an edge in $\Omega_{\left\{\infty\right\}}^{d}\left(X,x_{0}\right)$.
    Then, there exists an edge $F \sim G$ in $\Omega_{\infty}^{d}\left(X,x_{0}\right)$ with $\left[F\right] = \left[f\right]$ and $\left[G\right] = \left[g\right]$.
    By \cref{lem:paths-in-Omega-reparametrization}, we have paths $f \rightsquigarrow F$ and $g \rightsquigarrow G$ in $\Omega_{\infty}^{d}\left(X,x_{0}\right)$.
    Thus, there is a path $f \rightsquigarrow g$ in $\Omega_{\infty}^{d}\left(X,x_{0}\right)$.
\end{proof}

As a corollary, we obtain the main theorem of this section:

\begin{theorem} \label{equivalence-finite-infinite}
    The $d$-th $A$-homotopy group $A_{d}\left(X,x_{0}\right)$ of a pointed graph $\left(X,x_{0}\right)$ can be equivalently defined as the set of path-components of any of the following graphs:
    \begin{itemize}
        \item $\Omega_{\infty}^{d}\left(X,x_{0}\right)$ of graph maps from the infinite grid to $(X, x_0)$ stabilizing in all directions;
        \item $\Omega_{\left\{\infty\right\}}^{d}\left(X,x_{0}\right)$ which is the quotient of the graph $\Omega_{\infty}^{d}\left(X,x_{0}\right)$ above by the equivalence relation generated by reparametrization; or
        \item $\Omega_{\mathbb{N}}^{d}\left(X,x_{0}\right)$ of graph maps from finite grids to $(X,x_0)$ quotiented by the equivalence relation generated by reparametrization.
    \end{itemize}
\end{theorem}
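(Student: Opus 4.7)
The plan is to assemble the theorem as a straightforward corollary of the results established in this section, by chaining together the bijections already proven. Since $A_d(X, x_0)$ is defined in \cref{def:homotopy-groups} as $\pi_0 \Omega_\infty^d(X, x_0)$, the first bullet requires nothing. The second bullet is exactly the content of \cref{quotient-by-shrinking-is-iso}, which establishes that the quotient map $\Omega_\infty^d(X, x_0) \to \Omega_{\{\infty\}}^d(X, x_0)$ induces a bijection on $\pi_0$.

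For the third bullet, I would chain the following bijections:
\[
\pi_0 \Omega_{\mathbb{N}}^d(X, x_0) \xrightarrow{\iso} \pi_0 \Omega_{\mathbb{N} \cup \{\infty\}}^d(X, x_0) \xleftarrow{\iso} \pi_0 \Omega_{\{\infty\}}^d(X, x_0) \xleftarrow{\iso} \pi_0 \Omega_{\infty}^d(X, x_0).
\]
The two outer bijections on the right are the one from \cref{quotient-by-shrinking-is-iso} and the one obtained by applying $\pi_0$ to the graph isomorphism $\Omega_{\{\infty\}}^d(X, x_0) \to \Omega_{\mathbb{N} \cup \{\infty\}}^d(X, x_0)$ from the proposition immediately preceding the notation box. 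The leftmost bijection similarly comes from applying $\pi_0$ to the graph isomorphism $\Omega_{\mathbb{N}}^d(X, x_0) \to \Omega_{\mathbb{N} \cup \{\infty\}}^d(X, x_0)$ of that same proposition.

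I do not anticipate any real obstacle here, as all the substantive work (handling reparametrizations of infinite and finite paths, and showing they induce path-homotopies in the iterated loop graphs) was carried out in \cref{lem:reparametrization-finite-infinite}, \cref{lem:reparametrization-v-path-homotopy}, \cref{lem:paths-in-Omega-reparametrization}, and \cref{quotient-by-shrinking-is-iso}. The theorem merely collects these into a single statement, so the proof amounts to citing them in sequence.
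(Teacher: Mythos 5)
Your proposal is correct and matches the paper's intended argument: the theorem is stated there precisely as a corollary of \cref{quotient-by-shrinking-is-iso} together with the proposition asserting that $\Omega_{\mathbb{N}}^{d}\left(X,x_{0}\right) \to \Omega_{\mathbb{N} \cup \left\{\infty\right\}}^{d}\left(X,x_{0}\right)$ and $\Omega_{\left\{\infty\right\}}^{d}\left(X,x_{0}\right) \to \Omega_{\mathbb{N} \cup \left\{\infty\right\}}^{d}\left(X,x_{0}\right)$ are isomorphisms, chained exactly as you describe. The paper leaves this chaining implicit, so your write-up is simply a spelled-out version of the same proof.
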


Note that, for $d \geq 2$, the group operation on $A_{d}\left(X,x_{0}\right)$ must necessarily descend from the binary operations on the vertex set of $\Omega_{\infty}^{d}\left(X,x_{0}\right)$, since none of the other models admit the correct binary operation.
The only exception to this is the case $d = 1$, where $\Omega_{\mathbb{N}}\left(X,x_{0}\right)$ admits a product structure in $\Graph$.
In particular, \cref{quotient-by-shrinking-is-iso} cannot be strengthened to a statement about isomorphic groups for $d \geq 2$.

\section{The fundamental groupoid} \label{sec:fund-gpd}

In this section, we define the fundamental groupoid $\Pi_1 X$ of a graph $X$ (see \cref{def:fund-gpd}).
Its objects are the vertices of $X$, and its hom-sets are given by the set of path-components of the path-graphs constructed in \cref{def:path-graphs}.
Taking advantage of the `finite' grid approach of \cref{sec:fin-fomrulation}, we define the operations making $\Pi_{1}X$ into a groupoid using maps between the path-graphs that encode concatentation and inversion of paths, and verify that it gives a well-defined functor from the category of graphs to the category of groupoids.
We then show that the fundamental groupoid functor is monoidal with respect to the box product and hence, homotopy invariant.

Recall the definition of concatenation of paths having finite length.
For $m, n \in \mathbb{N}$ and vertices $x, x', x'' \in X$, we have a well-defined graph map
\[
    \ast \from P_{m}X\left(x,x'\right) \times P_{n}X\left(x',x''\right) \to P_{m+n}X\left(x,x''\right) 
\]
given by
\[
    \left(\gamma, \sigma\right) \mapsto \gamma \ast \sigma
\]

Given two shrinking maps $s \from I_{m'} \to I_{m}$ and $t \from I_{n'} \to I_{n}$, where $m,m',n, n' \in \mathbb{N}$, their \emph{concatenation} is the shrinking map $s \ast t \from I_{m'+n'} \to I_{m+n}$ given by
\[
    \left(s \ast t\right)\left(i\right) = \begin{cases}
        s\left(i\right) & \text{if } 0 \leq i \leq m' \\
        t\left(i-m'\right) + m & \text{if } m' \leq i \leq m'+n'
    \end{cases}
\]

Given paths $\gamma \from x \rightsquigarrow x'$ and $\sigma \from x' \rightsquigarrow x''$ in $X$ of lengths $m,n$ respectively, and shrinking maps $s \from I_{m'} \to I_{m}$ and $t \from I_{n'} \to I_{n}$, we have
\[
    \left(\gamma \circ s\right) \ast \left(\sigma \circ t\right) = \left(\gamma \ast \sigma \right) \circ \left(s \ast t\right).
\]
Thus, we obtain an induced graph map
\[
    \ast \from P_{\mathbb{N}}X\left(x,x'\right) \times P_{\mathbb{N}}X\left(x',x''\right) \to P_{\mathbb{N}}X\left(x,x''\right) 
\]
given by
\[
    \left(\left[\gamma\right], \left[\sigma\right]\right) \mapsto \left[\gamma \ast \sigma\right]
\]

Note that concatenation of paths of length $\infty$ does not define a graph map.

\begin{lemma}\label{path-concat-assoc-unital}
    Let $X$ be any graph.
    Then, the following diagrams of graph maps commute:
    \[
    \begin{tikzcd}
        P_{\mathbb{N}}X\left(x,x'\right) \gtimes P_{\mathbb{N}}X\left(x',x''\right) \gtimes P_{\mathbb{N}}X\left(x'',x'''\right) \arrow[r,"\ast \gtimes \mathrm{id}"] \arrow[d,"\mathrm{id} \gtimes \ast"'] & P_{\mathbb{N}}X\left(x,x''\right) \gtimes P_{\mathbb{N}}X\left(x'',x'''\right) \arrow[d,"\ast"] \\
        P_{\mathbb{N}}X\left(x,x'\right) \gtimes P_{\mathbb{N}}X\left(x',x'''\right) \arrow[r,"\ast"] & P_{\mathbb{N}}X\left(x,x'''\right)
    \end{tikzcd}
    \]
    \[
    \begin{tikzcd}
        P_{\mathbb{N}}X\left(x,x'\right) \times I_{0} \arrow[dr,"\cong"'] \arrow[r,"\mathrm{id} \times \mathrm{c}_{x'}"] & P_{\mathbb{N}}X\left(x,x'\right) \times P_{\mathbb{N}}X\left(x',x'\right) \arrow[d,"\ast"] \\
        & P_{\mathbb{N}}X\left(x,x'\right)
    \end{tikzcd}
    \qquad
    \begin{tikzcd}
        P_{\mathbb{N}}X\left(x,x\right) \times P_{\mathbb{N}}X\left(x,x'\right) \arrow[d,"\ast"] &  I_{0} \times P_{\mathbb{N}}X\left(x,x'\right) \arrow[dl,"\cong"] \arrow[l,"\mathrm{c}_{x} \times \mathrm{id}"'] \\
        P_{\mathbb{N}}X\left(x,x'\right) &
    \end{tikzcd}
    \]
    for all $x,x', x'',x''' \in X$.
    for all $x,x', x'',x''' \in X$.
    \qed
\end{lemma}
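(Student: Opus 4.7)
The plan is to verify each of the three commutative diagrams by unpacking the explicit piecewise formulas for concatenation and checking equality at the level of chosen representative paths; the equalities then descend to the quotients $P_{\mathbb{N}}X(\uvar,\uvar)$ because concatenation is compatible with reparametrization via the identity $(\gamma \circ s) \ast (\sigma \circ t) = (\gamma \ast \sigma) \circ (s \ast t)$ already established.

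For associativity, I would fix representatives $\gamma \from x \rsim x'$, $\sigma \from x' \rsim x''$, and $\tau \from x'' \rsim x'''$ of finite lengths $m$, $n$, $p$, and observe that both composites in the top diagram send $([\gamma],[\sigma],[\tau])$ to the class of the map $I_{m+n+p} \to X$ given by
\[
i \mapsto \begin{cases} \gamma(i) & \text{if } 0 \le i \le m, \\ \sigma(i-m) & \text{if } m \le i \le m+n, \\ \tau(i-m-n) & \text{if } m+n \le i \le m+n+p. \end{cases}
\]
At the overlap points $i = m$ and $i = m+n$ the two case definitions agree because $\gamma(m) = x' = \sigma(0)$ and $\sigma(n) = x'' = \tau(0)$, so the two associated concatenations $(\gamma \ast \sigma) \ast \tau$ and $\gamma \ast (\sigma \ast \tau)$ are equal as graph maps $I_{m+n+p} \to X$, which is enough to conclude equality in $P_{\mathbb{N}}X(x,x''')$.

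For the two unitality diagrams, I would choose the length-$0$ constant path $c_{x'} \from I_0 \to X$ as a representative of the vertex $[c_{x'}] \in P_{\mathbb{N}}X(x',x')$; any other constant path at $x'$ is a reparametrization of this one along the unique shrinking map $I_n \to I_0$, so this choice is canonical in the quotient. For any representative $\gamma \from I_m \to X$ of $[\gamma] \in P_{\mathbb{N}}X(x,x')$, the concatenation $\gamma \ast c_{x'}$ is a map $I_{m+0} = I_m \to X$ whose piecewise formula reduces to $i \mapsto \gamma(i)$, so $\gamma \ast c_{x'} = \gamma$ on the nose; the right unitality triangle therefore commutes. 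The left unitality triangle is symmetric, using $c_x \ast \gamma = \gamma$.

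Since every map appearing in the three diagrams is a graph map and we have just checked equality on vertices of the domain, commutativity as graph maps follows automatically. The whole argument is a matter of notational bookkeeping: the only mild subtlety is picking the length-$0$ constant path to realize the unit, so that the concatenations with a unit are literally equal to the given path rather than merely reparametrizations of it, but this is forced by the fact that $I_0$ is the unit of $\gtimes$.
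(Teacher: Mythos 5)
Your proof is correct: the paper marks this lemma as immediate (it is stated with a \qed and no written proof), and your verification by direct computation on representatives — strict associativity of concatenation of finite paths, plus unitality via the length-$0$ constant path, with everything descending to the quotient by the identity $(\gamma \circ s) \ast (\sigma \circ t) = (\gamma \ast \sigma) \circ (s \ast t)$ — is exactly the routine check the authors intend. Nothing further is needed.
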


For $n \in \mathbb{N}$ and vertices $x, x' \in X$, we have a well-defined graph map
\[
    \overline{(~)} \from P_{n}X\left(x,x'\right) \to P_{n}X\left(x', x\right)
\]
given by taking the inverse path $\gamma \mapsto \overline{\gamma}$ in the sense of \cref{def:inverse-path}.

Given a shrinking map $s \from I_{n'} \to I_{n}$, where $n, n' \in \mathbb{N}$, its \emph{reverse} is the shrinking map $\overline{s} \from I_{n'} \to I_{n}$ given by $\overline{s}\left(i\right) = n - s\left(n'-i\right)$.
Putting it together, given any path $\gamma \from x \rightsquigarrow x'$ in $X$ of length $n$ and a shrinking map $s \from I_{n'} \to I_{n}$, we have
\[
    \overline{\gamma \circ s} = \overline{\gamma} \circ \overline{s}.
\]
which gives an induced graph map
\[
    \overline{(~)} \from P_{\mathbb{N}}X\left(x,x'\right) \to P_{\mathbb{N}}X\left(x',x\right)
\]
given by $\left[\gamma\right] \mapsto     \left[\overline{\gamma}\right]$.

\begin{lemma}\label{path-inverse}
    Let $X$ be any graph.
    Then, the following diagram of set-functions commutes:
    \[
    \begin{tikzcd}
        \pi_{0} P_{\mathbb{N}}X\left(x',x\right) \times
        \pi_{0} P_{\mathbb{N}}X\left(x,x'\right) \arrow[d,"\ast"'] & 
        \pi_{0} P_{\mathbb{N}}X\left(x,x'\right) \arrow[r,"{(\text{id} ,\overline{(~)})}"] \arrow[l,"{(\overline{(~)}, \text{id})}"'] \arrow[d] &
        \pi_{0} P_{\mathbb{N}}X\left(x,x'\right) \times
        \pi_{0} P_{\mathbb{N}}X\left(x',x\right) \arrow[d,"\ast"] \\
        \pi_{0} P_{\mathbb{N}}X\left(x',x'\right) &
        \left\{\ast\right\} \arrow[l,"{[c_{x'}]}"'] \arrow[r,"{[c_{x}]}"] &
        \pi_{0} P_{\mathbb{N}}X\left(x,x\right)
    \end{tikzcd}
    \]
    for all $x, x' \in X$.
    \qed
\end{lemma}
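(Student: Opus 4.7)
The plan is to reduce commutativity of the diagram to two explicit identities: for any representative path $\gamma \from x \rightsquigarrow x'$ of some finite length $n$, I need $[\gamma \ast \overline{\gamma}] = [c_{x}]$ in $\pi_{0} P_{\mathbb{N}}X(x,x)$ and $[\overline{\gamma} \ast \gamma] = [c_{x'}]$ in $\pi_{0} P_{\mathbb{N}}X(x',x')$. These are exactly what the two triangles obtained by cutting the diagram along the middle vertical arrow assert, so it suffices to establish them on representatives.

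The key construction is an explicit ``backtracking homotopy''. For a representative $\gamma$ of length $n$, I would define $H \from I_{2n} \gtimes I_{n} \to X$ by
\[
    H(j,k) = \gamma\bigl(\min(j,\, 2n-j,\, k)\bigr).
\]
Direct evaluation gives $H(-,0) = c_{x}^{2n}$ (the constant loop at $x$ of length $2n$) because the minimum is then $0$; $H(-,n)$ coincides with $\gamma \ast \overline{\gamma}$, since for $j \le n$ the minimum is realised by $j$ and for $j \ge n$ it is realised by $2n-j$; and $H(0,k) = H(2n,k) = \gamma(0) = x$ for every $k$. Hence $H$ curries to a path in $P_{2n}X(x,x)$ from $c_{x}^{2n}$ to $\gamma \ast \overline{\gamma}$, which descends under the quotient $\coprod_{m} P_{m}X(x,x) \twoheadrightarrow P_{\mathbb{N}}X(x,x)$ to a path between their classes. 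Since $c_{x}^{2n}$ is a reparametrization of $c_{x} = c_{x}^{0}$ along the unique shrinking map $I_{2n} \to I_{0}$, one has $[c_{x}^{2n}] = [c_{x}]$ in $P_{\mathbb{N}}X(x,x)$, yielding $[\gamma \ast \overline{\gamma}] = [c_{x}]$ in $\pi_{0} P_{\mathbb{N}}X(x,x)$. The second identity $[\overline{\gamma} \ast \gamma] = [c_{x'}]$ is handled symmetrically by applying the same construction to $\overline{\gamma} \from x' \rightsquigarrow x$, since $\overline{\overline{\gamma}} = \gamma$.

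The only delicate point is the verification that $H$ is a graph map. This amounts to observing that any edge of $I_{2n} \gtimes I_{n}$ changes exactly one coordinate by one, so each of the three arguments $j$, $2n-j$, $k$ shifts by at most $1$; hence the overall minimum shifts by at most $1$, and applying the graph map $\gamma \from I_{n} \to X$ converts this into the required adjacent-or-equal condition. Everything else in the argument is immediate from the definitions of concatenation, inverse path, and the equivalence relation $\sim_{\mathbb{N}}$ defining $P_{\mathbb{N}}$, so I do not expect any additional obstacles.
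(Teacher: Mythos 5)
Your proof is correct. The paper states this lemma without proof (it is asserted with a \qed), so there is nothing to compare against; your reduction to the two identities $[\gamma \ast \overline{\gamma}] = [c_{x}]$ and $[\overline{\gamma} \ast \gamma] = [c_{x'}]$ is exactly what the two squares assert, and your folding homotopy $H(j,k)=\gamma\bigl(\min(j,\,2n-j,\,k)\bigr)$ is the same kind of $\min$-based homotopy the paper itself uses (cf.\ \cref{lem:paths-in-Im-times-In}), with all the checks you indicate (graph-map property, endpoint conditions, descent to $P_{\mathbb{N}}$, and $[c_{x}^{2n}]=[c_{x}]$ via the shrinking map $I_{2n}\to I_{0}$) going through.
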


\begin{definition}
    Given any path $\gamma \from x \rightsquigarrow x'$ in a graph $X$ having finite length, let its \emph{path-homotopy class} be its equivalence class in $\pi_{0}P_{\mathbb{N}}X\left(x,x'\right)$.
\end{definition}

\begin{definition}\label{def:fund-gpd}
    The \emph{fundamental groupoid} $\Pi_{1}X$ of a graph $X$ is defined as follows:
    \[
        \Pi_{1}X = \begin{cases}
            \text{objects: } & \text{vertices } x, x', \ldots \text{ of } X \\
            \text{morphisms: } & \text{path-homotopy classes } \left[\gamma\right] \from x \to x' \text{ of paths } \gamma \from x \rightsquigarrow x' \text{ in } X \text{ of finite length} 
        \end{cases}
    \]
    In other words, we have:
    \[
        \ob{\Pi_{1}X} = \V{X}
    \]
    and for every $x, x' \in \ob{\Pi_{1}X}$, we have:
    \[
        \Pi_{1}X\left(x,x'\right) = \pi_{0}P_{\mathbb{N}}X\left(x,x'\right).
    \]

    Composition is given by $\left[\sigma\right] \circ \left[\gamma\right] = \left[\gamma \ast \sigma \right]$, which is well-defined since it is obtained by applying $\pi_{0}$ to the map
    \[
        \ast \from P_{\mathbb{N}}X\left(x,x'\right) \times P_{\mathbb{N}}X\left(x',x''\right) \to P_{\mathbb{N}}X\left(x,x''\right) 
    \]
    \[
        \left(\left[\gamma\right], \left[\sigma\right]\right) \mapsto \left[\gamma \ast \sigma\right].
    \]
    
    Inverses are given by $\left[\gamma\right]^{-1} = \left[\overline{\gamma}\right]$, which is well-defined since it is obtained by applying $\pi_{0}$ to the map
    \[
        \overline{(~)} \from P_{\mathbb{N}}X\left(x,x'\right) \to P_{\mathbb{N}}X\left(x',x\right)
    \]
    \[
        \left[\gamma\right] \mapsto \left[\overline{\gamma}\right].
    \]

    Then, $\Pi_{1}G$ is a well-defined groupoid.

    Given any graph map $f \from X \to Y$, we have a well-defined functor $\Pi_{1}f \from \Pi_{1}X \to \Pi_{1}Y$ that maps a morphism $\left[\gamma\right] \from x \to x'$ in $\Pi_{1}X$ to the morphism $\left[f \circ \gamma\right] \from f\left(x\right) \to f\left(x'\right)$ in $\Pi_{1}Y$.
    Furthermore, the assignment $f \mapsto \Pi_{1}f$ is itself functorial, and we obtain a well-defined functor
    \[
        \Pi_{1} \from \Graph \to \Gpd.
    \]
\end{definition}

\begin{proposition}\label{A1-v-Pi1-connected}
    For any pointed graph $\left(X,x_{0}\right)$, we have a group isomorphism
    \[
        A_{1}\left(X,x_{0}\right) \cong \Pi_{1}X\left(x_{0},x_{0}\right)
    \]
    If $X$ is connected, the inclusion $\{x_{0}\} \hookrightarrow X$ induces an equivalence of categories between the fundamental group $A_{1}\left(X,x_{0}\right)$ and the fundamental groupoid $\Pi_{1}X$.
    \qed
\end{proposition}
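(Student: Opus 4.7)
The plan is to establish the isomorphism of sets first, then promote it to a group isomorphism, and finally upgrade it to an equivalence of groupoids under the connectedness hypothesis.

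For the first claim, I would unwind the definitions: $A_1(X,x_0) = \pi_0 \Omega_\infty(X,x_0)$ by \cref{def:homotopy-groups}, and by \cref{equivalence-finite-infinite} this is canonically in bijection with $\pi_0 \Omega_{\mathbb{N}}(X,x_0)$. Since $\Omega_{\mathbb{N}}(X,x_0) = P_{\mathbb{N}}X(x_0,x_0)$ by definition, this final set is exactly the hom-set $\Pi_1 X(x_0,x_0)$. To see that this bijection respects the group structures, I would check that the operation $\cdot_1$ on $A_1(X,x_0)$, which is induced by concatenation of infinite loops (after suitably trimming stabilization regions), descends under the quotient of \cref{equivalence-finite-infinite} to concatenation of finite loops. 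This is concatenation in $\Pi_1 X(x_0,x_0)$ which, modulo the convention $[\sigma]\circ[\gamma] = [\gamma \ast \sigma]$, is precisely composition in the groupoid. Associativity, unitality, and inversion are provided by \cref{path-concat-assoc-unital,path-inverse}.

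For the second claim, regard $A_1(X,x_0)$ as a one-object groupoid whose single hom-set is $\Pi_1 X(x_0,x_0)$ via the isomorphism just established. The inclusion $\{x_0\} \hookrightarrow X$ of graphs induces a functor of groupoids $\iota \from A_1(X,x_0) \to \Pi_1 X$ sending the unique object to $x_0$ and acting as the identity on hom-sets. By construction $\iota$ is fully faithful. It is essentially surjective exactly because $X$ is path-connected: any vertex $x \in \V{X}$ admits a finite path $\gamma \from x_0 \rightsquigarrow x$, and its path-homotopy class $[\gamma]$ is an isomorphism $\iota(\ast) = x_0 \to x$ in $\Pi_1 X$. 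A fully faithful and essentially surjective functor between groupoids is an equivalence, which finishes the argument.

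There is no real obstacle beyond careful bookkeeping; the one point requiring a moment of thought is checking that the group operation $\cdot_1$ on $A_1(X,x_0)$, which is only defined on infinite-grid representatives, agrees with concatenation of finite paths once we pass through the zig-zag $\pi_0 \Omega_\infty \cong \pi_0 \Omega_{\{\infty\}} \cong \pi_0 \Omega_{\mathbb{N}}$. This follows by inspection from the reparametrization lemmas (\cref{lem:reparametrization-v-path-homotopy} and \cref{lem:paths-in-Omega-reparametrization}) that were used to prove \cref{equivalence-finite-infinite} in the first place.
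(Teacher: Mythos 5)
Your proposal is correct and is essentially the argument the paper intends: the proposition is stated there without an explicit proof, and the natural route is exactly yours — the set bijection from \cref{equivalence-finite-infinite}, the check that concatenation of infinite loops descends to concatenation of finite loops through the reparametrization quotient (the one genuine point, which you flag and sketch adequately via \cref{lem:reparametrization-v-path-homotopy}), and the fully-faithful-plus-essentially-surjective argument for the equivalence, with essential surjectivity supplied by path-connectedness. No gaps.
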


\subsection{Monoidality}

Given two graphs $X$ and $Y$, we have the following groupoids:
\[
    \Pi_{1}\left(X \gtimes Y\right) \qquad \text{and} \qquad \Pi_{1}X \times \Pi_{1}Y
\]
They have identical objects: pairs $\left(x,y\right)$ where $x$ is a vertex in $X$ and $y$ is a vertex in $Y$.
However, they might a priori differ in their morphisms.
Our next goal is to show that these two groupoids are in fact isomorphic.

We begin by observing that a morphism $\left(x,y\right) \to \left(x',y'\right)$ in $\Pi_{1}\left(X \gtimes Y\right)$ is given by the path-homotopy class $\left[\gamma\right]$ of a path $\gamma \from \left(x,y\right) \rightsquigarrow \left(x',y'\right)$ in the box product $X \gtimes Y$.
On the other hand, a morphism $\left(x,y\right) \to \left(x',y'\right)$ in $\Pi_{1}X \times \Pi_{1}Y$ is given by a pair $\left(\left[\sigma\right],\left[\tau\right]\right)$ where $\left[\sigma\right]$ is the path-homotopy class of a path $\sigma \from x \rightsquigarrow x'$ in $X$ and $\left[\tau\right]$ is the path-homotopy class of a path $\tau \from y \rightsquigarrow y'$ in $Y$.

\begin{definition}
    Given graphs $X$ and $Y$, let
    \[
        \Psi_{X,Y} \from \Pi_{1}\left(X \gtimes Y\right) \to \Pi_{1}X \times \Pi_{1}Y
    \]
    be the functor whose components are given by $\Pi_{1}$ applied to $\pi_{X} \from X \gtimes Y \to X$ and $\pi_{Y} \from X \gtimes Y \to Y$, respectively.
    It maps a morphism $\left[\gamma\right] \from \left(x,y\right) \to \left(x',y'\right)$ to the morphism $\left(\left[\pi_{X}\gamma\right],\left[\pi_{Y}\gamma\right]\right) \from \left(x,y\right) \to \left(x',y'\right)$.
\end{definition}

These functors, as $X$ and $Y$ vary, satisfy certain coherence conditions and equip $\Pi_{1} \from \Graph \to \Gpd$ with the structure of a colax monoidal functor.
We want to show that the $\Psi_{X,Y}$ are isomorphisms, which would mean that $\Pi_{1} \from \Graph \to \Gpd$ is, in fact, a strong monoidal functor.

\begin{definition}
    Given any two graphs $X$ and $Y$, let
    \[
        \Phi_{X,Y} \from \Pi_{1}X \times \Pi_{1}Y \to \Pi_{1}\left(X \gtimes Y\right)
    \]
    be the functor that maps a morphism $\left(\left[\sigma\right],\left[\tau\right]\right) \from \left(x,y\right) \to \left(x',y'\right)$ in $\Pi_{1}X \times \Pi_{1}Y$ to the morphism $\left[\left(\sigma \gtimes y\right) \ast \left(x' \gtimes \tau\right)\right] \from \left(x,y\right) \to \left(x',y'\right)$ in $\Pi_{1}\left(X \gtimes Y\right)$.
\end{definition}

It is straightforward to verify that $\Psi_{X,Y} \circ \Phi_{X,Y}$ equals the identity functor on the groupoid $\Pi_{1}X \times \Pi_{1}Y$.
It only remains to show that $\Phi_{X,Y} \circ \Psi_{X,Y}$ equals the identity functor on the groupoid $\Pi_{1}\left(X \gtimes Y\right)$.
In particular, we need to show that given any path $\gamma \from \left(x,y\right) \rightsquigarrow \left(x',y'\right)$ in the box product $X \gtimes Y$, we have:
\[
    \left[\left(\pi_{X}\gamma \gtimes y\right) \ast \left(x' \gtimes \pi_{Y}\gamma\right)\right] = \left[\gamma\right].
\]

\begin{lemma} \label{lem:paths-in-Im-times-In}
    Let $m, n \in \mathbb{N}$.
    Given any two paths $p,q \from \left(0,0\right) \rightsquigarrow \left(m,n\right)$ of length $m+n$ in the graph $I_{m} \gtimes I_{n}$, we have $\left[p\right] = \left[q\right]$ in $\pi_{0}P_{\mathbb{N}}\left(I_{m} \gtimes I_{n}\right)\left(\left(0,0\right),\left(m,n\right)\right)$.
\end{lemma}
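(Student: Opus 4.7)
The plan is to exploit the finiteness of the paths: any length-$(m+n)$ path from $(0,0)$ to $(m,n)$ in $I_m \gtimes I_n$ must, by a quick accounting of displacements, be a \emph{monotone} lattice path, using only $+x$ or $+y$ moves with no stalls and no backtracks. Indeed, if $a,b,c,d,e$ count the $+x$, $-x$, $+y$, $-y$, and stay moves respectively, then $a-b=m$, $c-d=n$, and $a+b+c+d+e=m+n$ force $b=d=e=0$. So $p$ and $q$ are both sequences of $m$ R-steps and $n$ U-steps, i.e., monotone lattice paths.

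Next, I would invoke the classical fact that any two monotone lattice paths from $(0,0)$ to $(m,n)$ differ by a finite sequence of elementary swaps, each swap replacing a substring RU by UR (or conversely) at a single unit square of the grid. This reduces the problem to showing that two paths $p$ and $q$ of length $m+n$ agreeing everywhere except at position $k$, where $p(k-1)=q(k-1)=(i,j)$, $p(k)=(i{+}1,j)$, $q(k)=(i,j{+}1)$, and $p(k{+}1)=q(k{+}1)=(i{+}1,j{+}1)$, represent the same class in $\pi_0 P_{\mathbb{N}}(I_m \gtimes I_n)((0,0),(m,n))$.

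The crux is that $p(k)=(i{+}1,j)$ and $q(k)=(i,j{+}1)$ are \emph{not} adjacent in $I_m \gtimes I_n$, so $p$ and $q$ are not themselves neighbors in $P_{m+n}$. I would fix this by reparametrizing them to length $m+n+1$ using different shrinking maps $I_{m+n+1} \fto I_{m+n}$: let $p'$ be $p$ with an extra stall inserted before the swap, so $p'$ visits $\ldots,(i,j),(i,j),(i{+}1,j),(i{+}1,j{+}1),\ldots$, and let $q'$ be $q$ with an extra stall inserted after the swap, so $q'$ visits $\ldots,(i,j),(i,j{+}1),(i{+}1,j{+}1),(i{+}1,j{+}1),\ldots$. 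By construction $[p]=[p']$ and $[q]=[q']$ in $P_{\mathbb{N}}$. Then I check position-by-position that $p'(l)$ is equal or adjacent to $q'(l)$ in $I_m \gtimes I_n$: outside the window we have equality, and at the three affected positions we get $(i,j)\sim(i,j{+}1)$, $(i{+}1,j)\sim(i{+}1,j{+}1)$, and $(i{+}1,j{+}1)=(i{+}1,j{+}1)$. This witnesses an edge $p'\sim q'$ in $P_{m+n+1}(I_m \gtimes I_n)((0,0),(m,n))$, hence $[p']=[q']$ in $\pi_0$, and therefore $[p]=[q]$.

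The main obstacle is locating the correct reparametrization: a naive attempt to connect $p$ and $q$ directly, or to stall both at the same position, fails because $(i{+}1,j)$ and $(i,j{+}1)$ are never adjacent. The insight is that by stalling $p$ \emph{before} the disputed step and $q$ \emph{after} it, the two reparametrized paths become pointwise adjacent along the horizontal edges of the unit square $\{i,i{+}1\}\times\{j,j{+}1\}$, which is precisely the adjacency data encoded in the $4$-cycle $C_4 \iso I_1 \gtimes I_1$ sitting inside $I_m \gtimes I_n$. Once the elementary swap is handled, the induction on the number of swaps separating $p$ from $q$ completes the proof.
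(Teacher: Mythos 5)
Your proposal is correct, but it proves the lemma by a genuinely different route than the paper. The paper's proof is homotopy-theoretic: it writes down an explicit deformation retraction $H \from I_{m} \gtimes I_{n} \gtimes I_{m+n} \to I_{m} \gtimes I_{n}$ of the grid onto $(0,0)$ (contracting the second coordinate, then the first), composes it with $\left(p \ast \overline{q}\right) \gtimes \mathrm{id}$ to obtain a path-homotopy $p \ast \overline{q} \Rightarrow c_{(0,0)}$, and then concludes $[p] = [p \ast \overline{q} \ast q] = [c_{(0,0)} \ast q] = [q]$ using the already-established algebra of concatenation, inverses, and constant paths in $\pi_{0}P_{\mathbb{N}}$. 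Your argument is instead purely combinatorial: the displacement count forcing $b = d = e = 0$ correctly shows every length-$(m+n)$ path is a monotone lattice path with no stalls; the bubble-sort fact reduces everything to a single $\mathrm{RU} \leftrightarrow \mathrm{UR}$ swap; and your reparametrization trick (stalling $p$ before the swap and $q$ after it) is exactly what is needed, since the two reparametrized paths are then pointwise equal or adjacent, giving an edge in $P_{m+n+1}$, while $(i{+}1,j)$ and $(i,j{+}1)$ are indeed never adjacent in the box product. Since reparametrizations are identified in $P_{\mathbb{N}}$ and the edge descends to the quotient, each swap keeps the class in $\pi_{0}P_{\mathbb{N}}$ fixed, and induction on the number of swaps finishes the argument. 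What the two approaches buy: the paper's proof is short because it leans on the contractibility of grids and on the concatenation lemmas proved just before, and the same homotopy $H$ is reused elsewhere; yours is more elementary and self-contained, needing neither the contraction nor the unitality/inverse identities, and it produces an explicit zig-zag in the path graph, at the cost of a case analysis (monotonicity, swap-connectedness) that does not generalize as readily beyond this specific grid situation.
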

\begin{proof}
    Recall that we have a deformation retract of $I_{m} \gtimes I_{n}$ onto the vertex $\left(0,0\right)$ that can be obtained by first contracting along the second variable and then contracting along the first variable.
    Explicitly, we have a graph map $H \from I_{m} \gtimes I_{n} \gtimes I_{m+n} \to I_{m} \gtimes I_{n}$ given by
    \[
        H\left(i,j,k\right) =
        \begin{cases}
            \left(i,\min{\left(j,n-k\right)}\right) & \text{if } 0 \leq k \leq n \\
            \left(\min{\left(i,m+n-k\right)},0\right) & \text{if } n \leq k \leq m+n
        \end{cases}
    \]
    that satisfies:
    \[
        H\left(-,-,0\right) = \text{id}_{I_{m} \gtimes I_{n}}, \qquad H\left(-,-,m+n\right) = c_{\left(0,0\right)},
    \]
    and
    \[
        H\left(0,0,k\right) = \left(0,0\right), \qquad \text{for all } k = 1, \ldots, m+n.
    \]
    Then, the composite
    \[
    \begin{tikzcd}
        I_{2m+2n} \gtimes I_{m+n} \arrow[rrr, "\left(p \ast \overline{q}\right) \gtimes \text{id}_{I_{m+n}}"] &&& I_{m} \gtimes I_{n} \gtimes I_{m+n} \arrow[r,"H"] & I_{m} \gtimes I_{n}
    \end{tikzcd}
    \]
    defines a path-homotopy $p \ast \overline{q} \Rightarrow c_{\left(0,0\right)}$, or equivalently a path $p \ast \overline{q} \rightsquigarrow c_{\left(0,0\right)}$ in $\Omega_{2m+2n}\left(I_{m} \gtimes I_{n},\left(0,0\right)\right)$.
    Hence, we have:
    \begin{align*}
        \left[p\right] &= \left[p \ast c_{\left(k,l\right)}\right] \\
        &= \left[p \ast \overline{q} \ast q \right] \\
        &= \left[c_{\left(0,0\right)} \ast q\right] \\
        &= \left[q\right]
    \end{align*}
    in $\pi_{0}P_{\mathbb{N}}\left(I_{m} \gtimes I_{n}\right)\left(\left(0,0\right),\left(m,n\right)\right)$.
\end{proof}

\begin{lemma}
    Given any path $\gamma \from \left(x,y\right) \rightsquigarrow \left(x',y'\right)$ of finite length $n$ in $X \gtimes Y$, we have:
    \[
        \left[\left(\pi_{X}\gamma \gtimes y\right) \ast \left(x' \gtimes \pi_{Y}\gamma\right)\right] = \left[\gamma\right]
    \]
    in $\pi_{0}P_{\mathbb{N}}\left(X \gtimes Y\right)\left(\left(x,y\right),\left(x',y'\right)\right)$.
\end{lemma}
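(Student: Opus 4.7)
The plan is to factor the path $\gamma \from I_n \to X \gtimes Y$ through a map out of $I_a \gtimes I_b$ for suitable $a, b$, and then reduce the claim to \cref{lem:paths-in-Im-times-In}, which is already available.

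First, I would analyze $\gamma$ edge by edge. Each of the $n$ edges of $I_n$ maps under $\gamma$ to either an edge moving only in the $X$-coordinate, an edge moving only in the $Y$-coordinate, or a constant edge. Let $a$ and $b$ be the numbers of edges of the first and second type, so $a + b \leq n$. From this decomposition one builds order-preserving surjective maps $s_X \from I_n \to I_a$ and $s_Y \from I_n \to I_b$ recording cumulative $X$- and $Y$-progress respectively; these are shrinking maps in the sense of \cref{def:shrinking}. Factor $\pi_X \gamma = \bar\gamma_X \circ s_X$ and $\pi_Y \gamma = \bar\gamma_Y \circ s_Y$, where $\bar\gamma_X \from I_a \to X$ and $\bar\gamma_Y \from I_b \to Y$ are the ``non-stationary'' reparametrizations.

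Next, I would observe that the pair $(s_X, s_Y)$ assembles into a bona fide graph map $\tilde\gamma \from I_n \to I_a \gtimes I_b$: at each edge of $I_n$ exactly one of $s_X$, $s_Y$ moves (or neither does), which matches the edge set of the box product. By construction $\gamma = (\bar\gamma_X \gtimes \bar\gamma_Y) \circ \tilde\gamma$. Collapsing the stationary edges of $\tilde\gamma$ gives another shrinking map $r \from I_n \to I_{a+b}$ and a path $\bar{\tilde\gamma} \from I_{a+b} \to I_a \gtimes I_b$ from $(0,0)$ to $(a,b)$ with $\tilde\gamma = \bar{\tilde\gamma} \circ r$. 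Thus in $P_\mathbb{N}(X \gtimes Y)((x,y),(x',y'))$ we have $[\gamma] = [(\bar\gamma_X \gtimes \bar\gamma_Y) \circ \bar{\tilde\gamma}]$.

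Now the L-shaped path $p \from I_{a+b} \to I_a \gtimes I_b$ given by $p(k) = (k,0)$ for $k \leq a$ and $p(k) = (a, k-a)$ for $k \geq a$ satisfies $(\bar\gamma_X \gtimes \bar\gamma_Y) \circ p = (\bar\gamma_X \gtimes y) \ast (x' \gtimes \bar\gamma_Y)$ by a direct check, and in $P_\mathbb{N}$ this is the same class as $(\pi_X \gamma \gtimes y) \ast (x' \gtimes \pi_Y \gamma)$ since the factors only differ by reparametrization along $s_X$ and $s_Y$. By \cref{lem:paths-in-Im-times-In} applied to $\bar{\tilde\gamma}$ and $p$, there is a path-homotopy between them in $I_a \gtimes I_b$; postcomposing this homotopy with the graph map $\bar\gamma_X \gtimes \bar\gamma_Y$ yields a path-homotopy in $X \gtimes Y$ and gives the required equality of classes.

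The main obstacle is purely bookkeeping: carefully setting up the shrinking maps $s_X, s_Y, r$ and verifying that $(s_X, s_Y)$ really is a graph map into the \emph{box} product (as opposed to the categorical product). Once that factorization is in place, the homotopy-theoretic content is entirely absorbed by \cref{lem:paths-in-Im-times-In}, and functoriality of $\pi_0$ together with stability of classes under reparametrization finishes the argument.
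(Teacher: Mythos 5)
Your proposal is correct and follows essentially the same route as the paper's proof: decompose $\gamma$ into its $X$-moving and $Y$-moving edges, factor $\gamma$ as $(\sigma \gtimes \tau)$ applied to a staircase path in a grid $I_a \gtimes I_b$, and compare that staircase with the L-shaped path via \cref{lem:paths-in-Im-times-In}, using compatibility of concatenation with reparametrization. The only difference is bookkeeping: the paper first reparametrizes $\gamma$ to eliminate constant edges (so the staircase has length exactly $n$), whereas you keep them and collapse them afterwards with an extra shrinking map, which is equally fine.
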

\begin{proof}
    Replacing the path $\gamma$ with a reparametrization of $\gamma$ if necessary, we can suppose, without loss of generality, that $\gamma\left(i\right) \neq \gamma\left(i-1\right)$ for any $i = 1, \ldots, n$.

    Let $i_{1}, \ldots, i_{k} \in \left\{1, \ldots, n\right\}$ be all the indices for which the maps $e_{i_{1}}^{*}\left(\gamma\right), \ldots, e_{i_{k}}^{*}\left(\gamma\right) \from I_{1} \to X \gtimes Y$ are constant in the second variable (see \cref{def:edge-of-path} for the notation $e_i^*$).
    Let $j_{1}, \ldots, j_{l} \in \left\{1, \ldots, n\right\}$ be all the indices for which the maps $e_{j_{1}}^{*}\left(\gamma\right), \ldots, e_{j_{l}}^{*}\left(\gamma\right) \from I_{1} \to X \gtimes Y$ are constant in the first variable.

    We define a path $\sigma \from x \rightsquigarrow x'$ in $X$ of length $k$ as follows:
    \[
        \sigma = \left(\pi_{X}\circ e_{i_{1}}^{*}\left(\gamma\right)\right) \ast \cdots \ast \left(\pi_{X}\circ e_{i_{k}}^{*}\left(\gamma\right)\right)
    \]
    and a path $\tau \from y \rightsquigarrow y'$ in $Y$ of length $l$ as follows:
    \[
        \tau = \left(\pi_{Y}\circ e_{j_{1}}^{*}\left(\gamma\right)\right) \ast \cdots \ast \left(\pi_{Y}\circ e_{j_{l}}^{*}\left(\gamma\right)\right)
    \]

    We define a shrinking map $s \from I_{n} \to I_{k}$ as follows:
    \[
        s\left(i\right) =
        \begin{cases}
            0 & \text{if } i = 0 \\
            s\left(i-1\right) & \text{if } e_{i}^{*}\left(\gamma\right) \text{ is constant in the first variable} \\
            s\left(i-1\right) + 1 & \text{if } e_{i}^{*}\left(\gamma\right) \text{ is not constant in the first variable}
        \end{cases}
    \]
    and a shrinking map $t \from I_{n} \to I_{l}$ as follows:
    \[
        t\left(i\right) =
        \begin{cases}
            0 & \text{if } i = 0 \\
            t\left(i-1\right) & \text{if } e_{i}^{*}\left(\gamma\right) \text{ is constant in the second variable} \\
            t\left(i-1\right) + 1 & \text{if } e_{i}^{*}\left(\gamma\right) \text{ is not constant in the second variable}
        \end{cases}
    \]

    Finally, we define a path $p \from \left(0,0\right) \rightsquigarrow \left(k,l\right)$ of length $k + l = n$ in $I_{k} \gtimes I_{l}$ as follows:
    \[
        p\left(i\right) = \left(s\left(i\right),t\left(i\right)\right)
    \]

    By construction, we have the following:
    \[
        \pi_{X} \circ \gamma = \sigma \circ s, \qquad \pi_{Y} \circ \gamma = \tau \circ t, \qquad \text{and} \qquad \gamma = \left(\sigma \gtimes \tau\right) \circ p.
    \]
    Then, we have:
    \begin{align*}
        \left(\left(\pi_{X} \circ \gamma\right) \gtimes y\right) \ast \left(x' \gtimes \left(\pi_{Y} \circ \gamma\right)\right) &= \left(\left(\pi_{X} \circ \gamma\right) \gtimes \left(\pi_{Y} \circ \gamma\right) \right) \circ p_{\bot,n,n} \\
        &= \left(\sigma \circ s\right) \gtimes \left(\tau \circ t\right) \circ p_{\bot,n,n} \\
        &= \left(\sigma \gtimes \tau\right) \circ p_{\bot,k,l} \circ \left(s \ast t\right)
    \end{align*}
    where $p_{\bot,n,n}$ represents the path $\left(\text{id}_{I_{n}} \gtimes 0\right) \ast \left(n \gtimes \text{id}_{I_{n}}\right)$ in $I_{n} \gtimes I_{n}$ and $p_{\bot,k,l}$ represents the path $\left(\text{id}_{I_{k}} \gtimes 0\right) \ast \left(k \gtimes \text{id}_{I_{l}}\right)$ in $I_{k} \gtimes I_{l}$.

    By \cref{lem:paths-in-Im-times-In}, we have $\left[p_{\bot,k,l}\right] = \left[p\right]$ in $\pi_{0}P_{\mathbb{N}}\left(I_{k} \gtimes I_{l}\right)\left(\left(0,0\right),\left(k,l\right)\right)$.
    Consequently, we have:
    \begin{align*}
        \left[\left(\left(\pi_{X} \circ \gamma\right) \gtimes y\right) \ast \left(x' \gtimes \left(\pi_{Y} \circ \gamma\right)\right)\right] &= \left[\left(\sigma \gtimes \tau\right) \circ p_{\bot,k,l} \circ \left(s \ast t\right)\right] \\
        &= \left[\left(\sigma \gtimes \tau\right) \circ p \circ \left(s \ast t\right)\right] \\
        &= \left[\gamma \circ \left(s \ast t\right)\right] \\
        &= \left[\gamma\right].
        \qedhere
    \end{align*}
\end{proof}

\begin{theorem} \label{thm:pi1_monoidal}
    The functor $\Pi_{1} \from \Graph \to \Gpd$ is strong monoidal.
    In particular, given any two graphs $X$ and $Y$, the functors
    \[
        \Psi_{X,Y} \from \Pi_{1}\left(X \gtimes Y\right) \to \Pi_{1}X \times \Pi_{1}Y
    \]
    and
    \[
        \Phi_{X,Y} \from \Pi_{1}X \times \Pi_{1}Y \to \Pi_{1}\left(X \gtimes Y\right)
    \]
    are inverse to each other.
    \qed
\end{theorem}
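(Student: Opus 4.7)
The plan is to first establish that $\Psi_{X,Y}$ and $\Phi_{X,Y}$ are mutually inverse functors (the substantive part), and then observe that strict monoidality follows from naturality in $X$ and $Y$ together with the trivial preservation of the unit $I_0$. I would not need to do any new path-homotopy arguments, since the two preceding lemmas essentially package all the combinatorial content.

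For the composite $\Psi_{X,Y} \circ \Phi_{X,Y}$: given a morphism $(\left[\sigma\right], \left[\tau\right]) \from (x,y) \to (x',y')$ in $\Pi_1 X \times \Pi_1 Y$ with $\sigma \from I_m \to X$ and $\tau \from I_n \to Y$, applying $\Phi_{X,Y}$ yields $\left[(\sigma \gtimes y) \ast (x' \gtimes \tau)\right]$, and then $\Psi_{X,Y}$ projects this to $\left(\left[\pi_X \circ ((\sigma \gtimes y) \ast (x' \gtimes \tau))\right], \left[\pi_Y \circ ((\sigma \gtimes y) \ast (x' \gtimes \tau))\right]\right)$. A direct computation identifies the first component with $\sigma \ast c_{x'}$ and the second with $c_y \ast \tau$, each of which is path-homotopic to the original path by the unit axiom from \cref{path-concat-assoc-unital} (after passing to $\pi_0$). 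So the composite is the identity. For the composite $\Phi_{X,Y} \circ \Psi_{X,Y}$: given $\left[\gamma\right] \from (x,y) \to (x',y')$ with $\gamma$ of finite length, the image is $\left[(\pi_X \gamma \gtimes y) \ast (x' \gtimes \pi_Y \gamma)\right]$, and this equals $\left[\gamma\right]$ by the lemma immediately preceding the theorem statement. This handles both functoriality checks, since we already know each of $\Psi_{X,Y}$ and $\Phi_{X,Y}$ is a well-defined functor on its own.

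Finally, to upgrade ``isomorphism of groupoids on each pair $(X,Y)$'' to ``strict monoidal,'' I would observe that $\Pi_1 I_0$ is the terminal groupoid (preserving the unit on the nose), that the isomorphisms $\Psi_{X,Y}$ are natural in $X$ and $Y$ by construction (each component is $\Pi_1$ applied to a projection, and the projections are natural), and that the associator coherence commutes strictly because the analogous coherence already holds on the nose for $\gtimes$ in $\Graph$ and for $\times$ in $\Gpd$, and $\Pi_1$ of a projection is the projection of $\Pi_1$'s. The main ``obstacle,'' such as it is, is really the verification in the preceding lemma that $\Phi_{X,Y} \circ \Psi_{X,Y}$ is the identity; everything else in the theorem is either bookkeeping or a direct application of \cref{path-concat-assoc-unital}.
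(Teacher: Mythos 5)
Your proposal is correct and follows essentially the same route as the paper: the real content is the lemma immediately preceding the theorem (that $\left[\left(\pi_{X}\gamma \gtimes y\right) \ast \left(x' \gtimes \pi_{Y}\gamma\right)\right] = \left[\gamma\right]$), which handles $\Phi_{X,Y} \circ \Psi_{X,Y}$, while $\Psi_{X,Y} \circ \Phi_{X,Y} = \mathrm{id}$ and the monoidal coherence/unit checks are the routine bookkeeping the paper likewise treats as straightforward. Your explicit identification of the projected concatenation with $\sigma \ast c_{x'}$ and $c_{y} \ast \tau$ (equal to $\left[\sigma\right]$ and $\left[\tau\right]$ by unitality, or equivalently by reparametrization) just spells out what the paper leaves implicit.
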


We can now deduce the enrichment of the category of graphs over the category of groupoids.

\begin{theorem} \label{graph-gpd-enriched}
    The category $\Graph$ of graphs admits an enrichment over $\Gpd$ with hom-groupoids given by $\Pi_{1}\left(\ghom{X}{Y}\right)$.
    Given this enrichment, $\Pi_{1} \from \Graph \to \Gpd$ is a $\Gpd$-enriched functor.
\end{theorem}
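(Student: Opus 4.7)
The plan is to transport the canonical self-enrichment of the closed symmetric monoidal category $(\Graph, \gtimes, I_{0}, \ghom{-}{-})$ of \cref{prop:graph-closed-monoidal} along the strict monoidal functor $\Pi_{1}$ of \cref{thm:pi1_monoidal}. This is the standard change-of-enrichment construction for monoidal functors applied to a closed monoidal category, so the content reduces to assembling the structure maps and invoking facts already established in the paper.

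To construct the enrichment, I take the internal composition $c_{X,Y,Z} \from \ghom{Y}{Z} \gtimes \ghom{X}{Y} \to \ghom{X}{Z}$ and the unit $u_{X} \from I_{0} \to \ghom{X}{X}$ provided by the closed monoidal structure of $\Graph$ (the former obtained by transposing the double evaluation, the latter picking out $\id_{X}$), and apply $\Pi_{1}$ to them. Precomposing with the isomorphism $\Psi^{-1}_{\ghom{Y}{Z},\ghom{X}{Y}}$ from \cref{thm:pi1_monoidal} yields a functor
\[
\Pi_{1}(\ghom{Y}{Z}) \times \Pi_{1}(\ghom{X}{Y}) \to \Pi_{1}(\ghom{X}{Z}),
\]
and, identifying $\Pi_{1}(I_{0})$ with the terminal groupoid $\mathbf{1}$, a unit functor $\mathbf{1} \to \Pi_{1}(\ghom{X}{X})$. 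The associativity and unit laws for this $\Gpd$-enrichment are the images under $\Pi_{1}$ of the associativity and unit diagrams that already commute in $\Graph$, rewritten using the $\Psi^{-1}$ isomorphisms in order to express everything in terms of Cartesian products of groupoids; they commute by functoriality of $\Pi_{1}$ together with the naturality of $\Psi$.

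For the enriched functor structure on $\Pi_{1}$, I start from the evaluation map $\mathrm{ev} \from \ghom{X}{Y} \gtimes X \to Y$ in $\Graph$, apply $\Pi_{1}$, and precompose with $\Psi^{-1}$ to produce a functor $\Pi_{1}(\ghom{X}{Y}) \times \Pi_{1}X \to \Pi_{1}Y$. Since $\Gpd$ is Cartesian closed with internal hom $\Gpd(-,-)$, the closed-monoidal adjunction transposes this to a functor $(\Pi_{1})_{X,Y} \from \Pi_{1}(\ghom{X}{Y}) \to \Gpd(\Pi_{1}X, \Pi_{1}Y)$. Compatibility of $(\Pi_{1})_{X,Y}$ with the composition and unit maps constructed above follows, by the universal property of $\Gpd(-,-)$, from the analogous compatibility of $\mathrm{ev}$ with $c_{X,Y,Z}$ and $u_{X}$ in $\Graph$, which is exactly the property characterizing the internal hom $\ghom{-}{-}$.

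The main obstacle is bookkeeping: each enrichment axiom corresponds to a coherence diagram involving the isomorphisms $\Psi$ and the Cartesian closed transposition in $\Gpd$, and one must match these up carefully when several products appear. However, no genuinely new calculation is required, since every required commutative diagram in $\Gpd$ is the image under the monoidal functor $\Pi_{1}$ of a diagram in $\Graph$ that already commutes by the closed monoidal structure.
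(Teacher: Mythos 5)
Your argument is correct and is essentially the paper's own proof: the paper simply cites the change-of-base lemma for enriched categories (applied to the self-enrichment of $\Graph$ from \cref{prop:graph-closed-monoidal} and the strict monoidal functor $\Pi_{1}$ from \cref{thm:pi1_monoidal}), whereas you unfold that lemma by hand, constructing the composition, unit, and enriched-functor data explicitly via the isomorphisms $\Psi$ and transposition in $\Gpd$. No substantive difference in approach.
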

\begin{proof}
    This follows from a straightforward application of the change of base lemma for enriched categories \cite[Lem.~3.4.3]{riehl:cat-htpy} to the $\Graph$, which is enriched over itself by \cref{prop:graph-closed-monoidal} as a closed monoidal category, and the functor $\Pi_{1} \from \Graph \to \Gpd$, which is strong monoidal by \cref{thm:pi1_monoidal}.
\end{proof}

A category enriched over $\Gpd$ is precisely a $\left(2,1\right)$-category, with 2-cells given by morphisms in the hom-groupoids.
In the case of $\Graph$, where the hom-groupoids are given by $\Pi_{1}\left(\ghom{X}{Y}\right)$, the 2-cells are given by path-homotopy classes of paths in the graph $\ghom{X}{Y}$ or, equivalently, path-homotopy classes of homotopies between graph maps.

A similar statement can be made about the categorical product in place of the box product, using the fact that the two are weakly equivalent.
From the pont of view of A-homotopy theory, the box product is more natural to work with, which is why we have not pursued this direction in the present paper.

\begin{corollary}\label{thm:htpy-invar}
    $\Graph$ admits the structure of a $\left(2,1\right)$-category, with $0$-cells given by graphs, $1$-cells given by graph maps, and $2$-cells given by path-homotopy classes of homotopies between graph maps.
    Furthermore, $\Pi_{1} \from \Graph \to \Gpd$ is a $\left(2,1\right)$-functor and hence, maps homotopies between graph maps to natural isomorphisms between the induced functors.
    \qed
\end{corollary}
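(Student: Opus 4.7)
The plan is to unpack the standard identification of $(2,1)$-categories with categories enriched in $\Gpd$, and then invoke \cref{graph-gpd-enriched} directly. Recall that a $(2,1)$-category is by definition exactly a $\Gpd$-enriched category: its 0-cells are the objects, its 1-cells (between fixed 0-cells) are the objects (vertices) of the corresponding hom-groupoid, and its 2-cells are the morphisms therein (necessarily invertible, since a hom-groupoid is a groupoid). Likewise a $(2,1)$-functor is the same data as a $\Gpd$-enriched functor. Under this dictionary, \cref{graph-gpd-enriched} directly supplies a $(2,1)$-structure on $\Graph$ with hom-groupoids $\Pi_{1}\left(\ghom{X}{Y}\right)$, as well as the fact that $\Pi_{1} \from \Graph \to \Gpd$ is a $(2,1)$-functor.

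Next I would verify that the 2-cells produced by this enrichment match the description in the statement. A 2-cell $f \Rightarrow g$ is a morphism $f \to g$ in $\Pi_{1}\left(\ghom{X}{Y}\right)$, that is, a path-homotopy class of a finite-length path $f \rightsquigarrow g$ in the graph $\ghom{X}{Y}$. By the observation made just before the definition of $A$-homotopies, such a path is the same data as a homotopy $H \from f \Rightarrow g$ of the same length. Hence 2-cells in this structure are precisely path-homotopy classes of homotopies between graph maps, as claimed.

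For the final sentence, I would note that the 2-cells of $\Gpd$ viewed as a $(2,1)$-category are precisely natural isomorphisms, since every natural transformation between functors of groupoids is automatically invertible. A $(2,1)$-functor into $\Gpd$ therefore sends every 2-cell to a natural isomorphism; applied to $\Pi_{1}$, this yields the claim that homotopies between graph maps are sent to natural isomorphisms between the induced functors on fundamental groupoids.

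The main step is not a substantive obstacle but rather a careful bookkeeping exercise: once \cref{graph-gpd-enriched} is in hand, the corollary is purely formal, and the only care required is to make explicit the translation between $\Gpd$-enriched structures and $(2,1)$-categorical structures, and to match paths in $\ghom{X}{Y}$ with homotopies between graph maps, both of which are immediate from the definitions set up earlier in the paper.
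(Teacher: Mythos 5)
Your argument is correct and matches the paper's: the corollary is deduced formally from \cref{graph-gpd-enriched} via the identification of $\Gpd$-enriched categories with $(2,1)$-categories, together with the observation that morphisms in $\Pi_{1}(\ghom{X}{Y})$ are path-homotopy classes of homotopies. No gaps.
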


\section{Covering graphs} \label{sec:covering}

In this section, we develop the theory of covering graphs, a discrete analogue of the theory of covering spaces from algebraic topology.

We begin by introducing local isomorphisms and covering maps, giving several equivalent definitions of each of these notions (\cref{lem:equiv-defs-local-iso,lem:equiv-defs-covering}), and studying their basic properties, e.g., the path and homotopy lifting properties (\cref{lem:path-lifting-property,lem:homotopy-lifting-property}, respectively).
Note that the notion that we refer to as the `local isomorphism' below was previously studied in combinatorics, e.g., \cite{leighton,sunada} and in A-homotopy theory specifically, e.g., \cite{barcelo-greene-jarrah-welker:vanishing,hardeman:lifting}, under the name `covering map' which we reserve for a stronger notion.

We then turn our attention to the category of coverings of a fixed graph and show that it is equivalent to the category of $\Set$-valued functors on the fundamental groupoid of the graph (\cref{thm:covers-as-functors}).
From that, we deduce the familiar Galois correspondence between connected coverings of a pointed connected graph and the poset of subgroups of its fundamental group (\cref{cor:Galois-correspondence}).

We conclude this section with the construction of the universal cover of a pointed graph (\cref{univ-cov-exists}), which we take to be the initial object in the category of pointed coverings.

We note here that the key results of this section, including \cref{thm:covers-as-functors,univ-cov-exists,cor:Galois-correspondence}, require the full strength of our definition of a covering graph.
When working with local isomorphisms instead, one needs to impose an additional assumption that the base graph has no 3- and 4-cycles, which by the result of Lutz \cite[Thm.~1.1]{lutz} limits the theory to graphs with no non-trivial homotopy groups above dimension 1.

\subsection{Local isomorphisms and covering maps}

To define local isomorphisms and covering graphs, we need some preliminary definitions.
The first of these is a well-known categorical notion of lifting properties.

\begin{definition}
  Let $\cat{C}$ be any category.
  A morphism $p \from C \to D$ in $\cat{C}$ has the \emph{right-lifting property (RLP)} against a morphism $i \from A \to B$ in $\cat{C}$ if for every commutative square of the form
  \[
  \begin{tikzcd}
      A \arrow[r,"u"] \arrow[d,"i"'] & C \arrow[d,"p"] \\
      B \arrow[r,"v"] & D 
  \end{tikzcd}
  \]
  there exists a morphism $h \from B \to C$, called a \emph{lift}, such that the following diagram commutes:
  \[
  \begin{tikzcd}
      A \arrow[r,"u"] \arrow[d,"i"'] & C \arrow[d,"p"] \\
      B \arrow[r,"v"] \arrow[ur,"h" description, dotted] & D 
  \end{tikzcd}
  \]
  Furthermore, if the lift $h \from B \to C$ is unique, then we say that $p$ has the \emph{unique RLP} against $i$.
\end{definition}

Next, we define certain graph maps that we will be interested in taking lifts against.
These may be thought of as graph-theoretic analogues of the \emph{open-box inclusions} that appear in the theory of cubical Kan complexes.

\begin{definition}
    For any $m, n \in \mathbb{N}$, let $\gbox_{m,n} \from I_{2m+n} \to I_{m} \gtimes I_{n}$ be the graph map given by:
    \[
        \gbox_{m,n}\left(i\right) = 
        \begin{cases}
            \left(m-i,0\right) & \text{if } 0 \leq i \leq m \\
            \left(0,i-m\right) & \text{if } m \leq i \leq m+n \\
            \left(i-m-n,n\right) & \text{if } m+n \leq i \leq 2m+n
        \end{cases}
    \]
\end{definition}

\begin{figure}[H]
\centering
\begin{minipage}{0.25\textwidth}
\centering
\begin{tikzpicture}[colorstyle/.style={circle, draw=black, fill=black, thick, inner sep=0pt, minimum size=1 mm, outer sep=0pt},scale=1.5]
    \node (1) at (0,0) [colorstyle, label = above: {$1$}, label = left: {\tiny $0$}] {};
    \node (2) at (1,0) [colorstyle, label = below: {$2$}, label = above: {$0$}, label = right: {\tiny $1$}] {};
    \draw [thick] (1) -- (2);
    \node at (0.5,-1) [anchor = south]{$\gbox_{1,0}$};
\end{tikzpicture}
\end{minipage}
\begin{minipage}{0.2\textwidth}
\centering
\begin{tikzpicture}[colorstyle/.style={circle, draw=black, fill=black, thick, inner sep=0pt, minimum size=1 mm, outer sep=0pt},scale=1.5]
    \node (1) at (0,1) [colorstyle, label = above left: {$1$}, label = below right: {\tiny $(0,0)$}] {};
    \node (2) at (1,1) [colorstyle, label = above right: {$0$}, label = below right: {\tiny $(1,0)$}] {};
    \node (3) at (0,0) [colorstyle, label = below left: {$2$}, label = above right: {\tiny $(0,1)$}] {};
    \node (4) at (1,0) [colorstyle, label = below right: {$3$}, label = above right: {\tiny $(1,1)$}] {};
    \draw [thick] (4) -- (3);
    \draw [thick] (3) -- (1);
    \draw [thick] (1) -- (2);
    \draw [gray,thin,dotted] (4) -- (2);
    \node at (0.5,-1) [anchor = south]{$\gbox_{1,1}$};
\end{tikzpicture}
\end{minipage}
\begin{minipage}{0.5\textwidth}
\centering
\begin{tikzpicture}[colorstyle/.style={circle, draw=black, fill=black, thick, inner sep=0pt, minimum size=1 mm, outer sep=0pt},scale=1.5]
    \node (0) at (3,0) [colorstyle, label = below left: {$8$}, label = above right: {\tiny $(3,2)$}] {};
    \node (1) at (2,0) [colorstyle, label = below: {$7$}, label = above: {\tiny $(2,2)$}] {};
    \node (2) at (1,0) [colorstyle, label = below: {$6$}, label = above: {\tiny $(1,2)$}] {};
    \node (3) at (0,0) [colorstyle, label = below left: {$5$}, label = above right: {\tiny $(0,2)$}] {};
    \node (4) at (0,1) [colorstyle, label = left: {$4$}, label = right: {\tiny $(0,1)$}] {};
    \node (5) at (0,2) [colorstyle, label = above left: {$3$}, label = below right: {\tiny $(0,0)$}] {};
    \node (6) at (1,2) [colorstyle, label = above: {$2$}, label = below: {\tiny $(1,0)$}] {};
    \node (7) at (2,2) [colorstyle, label = above: {$1$}, label = below: {\tiny $(2,0)$}] {};
    \node (8) at (3,2) [colorstyle, label = above right: {$0$}, label = below right: {\tiny $(3,0)$}] {};
    \node (9) at (3,1) [label = right: {\tiny $(3,1)$}] {};
    \node (10) at (2,1) {\tiny $(2,1)$};
    \node (11) at (1,1) {\tiny $(1,1)$};
    \draw [thick] (0) -- (1);
    \draw [thick] (1) -- (2);
    \draw [thick] (2) -- (3);
    \draw [thick] (3) -- (4);
    \draw [thick] (4) -- (5);
    \draw [thick] (5) -- (6);
    \draw [thick] (6) -- (7);
    \draw [thick] (7) -- (8);
    \draw [gray,thin,dotted] (0) -- (8);
    \draw [gray,thin,dotted] (1) -- (7);
    \draw [gray,thin,dotted] (2) -- (6);
    \draw [gray,thin,dotted] (4) -- (9);
    \node at (1.5,-1) [anchor = south]{$\gbox_{3,2}$};
\end{tikzpicture}
\end{minipage}
\end{figure}

\begin{definition}
    The \emph{(star) neighbourhood} $N_{x}$ of a vertex $x$ in a graph $X$ is a subgraph of $X$ given as follows:
    \begin{align*}
        \V{\left(N_{x}\right)} &= \left\{x' \in \V{X} \ \middle\vert \ x' \sim x \text{ or } x' = x \right\} \\
        \E{\left(N_{x}\right)} &= \left\{ e \in \E{X} \ \middle\vert \ x \in e \right\}
    \end{align*}
\end{definition}

\begin{lemma}\label{RLP-nbhd}
    Let $p \from Y \to X$ be a graph map.
    \begin{enumerate}
        \item The map $p$ has RLP against $\gbox_{1,0} \from I_{1} \to I_{2}$ if and only if the restriction $p|_{N_{y}} \from N_{y} \to N_{p\left(y\right)}$ is injective for all $y \in Y$.
        \item The map $p$ has RLP against $0 \from I_{0} \to I_{1}$ if and only if the restriction $p|_{N_{y}} \from N_{y} \to N_{p\left(y\right)}$ is surjective for all $y \in Y$.
    \end{enumerate}
\end{lemma}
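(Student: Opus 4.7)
The plan is to unpack each right-lifting property by explicitly describing what a commutative square with the given map on the left looks like in terms of vertex data, and then compare the existence of a lift to the desired local injectivity or surjectivity of $p|_{N_y}$. Since the maps on the left, $\gbox_{1,0}$ and $0 \from I_0 \to I_1$, are both extremely small, the lifting conditions can be read off completely by a vertex-by-vertex bookkeeping argument.

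For part (1), I would first describe the map $\gbox_{1,0} \from I_2 \to I_1$ explicitly: it sends the endpoints $0, 2$ of $I_2$ to the vertex $1$ of $I_1$ and the middle vertex $1$ to the vertex $0$. A commutative square against $p$ therefore reduces to choosing three vertices $u(0), u(1), u(2) \in Y$ with $u(0), u(2) \in N_{u(1)}$ and $p(u(0)) = p(u(2))$ (the bottom map $v$ being forced by commutativity). A lift $h \from I_1 \to Y$ must satisfy $h(0) = u(1)$ and $h(1) = u(0) = u(2)$, so it exists if and only if $u(0) = u(2)$. Quantifying over all such squares, this matches exactly the condition that for every $y \in Y$ and every pair $y', y'' \in N_y$ with $p(y') = p(y'')$ one has $y' = y''$, which is injectivity of $p|_{N_y}$ on vertices, hence injectivity as a graph map since edges in the star $N_y$ are determined by their pairs of endpoints.

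For part (2), the map $0 \from I_0 \to I_1$ picks out the vertex $0$, so a commutative square is the data of a vertex $y \in Y$ and a graph map $v \from I_1 \to X$ with $v(0) = p(y)$. A lift $h \from I_1 \to Y$ amounts to choosing an element $h(1) \in N_y$ with $p(h(1)) = v(1)$. The existence of a lift for every such square is therefore precisely surjectivity of $p|_{N_y}$ on vertices; the upgrade to surjectivity as a graph map is automatic because $N_{p(y)}$ is a star centered at $p(y)$ and every edge of $N_{p(y)}$ contains $p(y)$, which is hit by $y \in N_y$ itself.

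The only real care needed, and not a genuine obstacle, is tracking the fact that graph maps may collapse an edge to a single vertex: the degenerate cases where two of $u(0), u(1), u(2)$ coincide, or where $v(0) = v(1)$, must be handled alongside the generic case. These are absorbed uniformly into the analysis once one notes that $N_y$ is defined to contain $y$ itself, so a ``collapsed edge at $y$'' is literally the vertex $y \in N_y$ and every case falls under the same vertex-level statement.
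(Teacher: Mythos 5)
Your proposal is correct and takes essentially the same approach as the paper: unpack the two tiny lifting problems against $\gbox_{1,0} \from I_{2} \to I_{1}$ and $0 \from I_{0} \to I_{1}$ vertex-by-vertex and match solvability of every square with injectivity, respectively surjectivity, of $p|_{N_{y}}$ on vertices. The paper merely organizes the same bookkeeping as two separate implications for each part (building the explicit square from a failure of injectivity or surjectivity, and conversely), and your additional remarks upgrading the vertex-level statements to the star subgraphs are harmless extras.
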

\begin{proof}
\begin{enumerate}
    \item $\left(\Rightarrow\right)$ Let $y \in Y$ and $x = p\left(y\right) \in X$. Suppose there exist $y', y'' \in N_{y}$ such that $p\left(y'\right) = p\left(y''\right) = x' \in N_{x}$.
    We can define a graph map $\lambda \from I_{2} \to Y$ where
    \[
        \lambda\left(0\right) = y', \qquad \lambda\left(1\right) = y, \qquad \lambda\left(2\right) = y''
    \]
    giving us the following lifting problem:
    \[
    \begin{tikzcd}
        I_{2} \arrow[r,"\lambda"] \arrow[d,"\gbox_{1,0}"'] & Y \arrow[d,"p"] \\
        I_{1} \arrow[r,"e"] & X
    \end{tikzcd}
    \]
    where $e \from I_{1} \to X$ is given by $e\left(0\right) = x$ and $e\left(1\right) = x'$.
    By the hypothesis, the above lifting problem has a solution, which implies that
    \[
        y' = \lambda\left(0\right) = \lambda\left(2\right) = y''
    \]
    Thus, $p|_{N_{y}}$ is injective.

    $\left(\Leftarrow\right)$ Consider a lifting problem between $\gbox_{1,0} \from I_{2} \to I_{1}$ and $p \from Y \to X$ as follows:
    \[
    \begin{tikzcd}
        I_{2} \arrow[r,"u"] \arrow[d,"\gbox_{1,0}"'] & Y \arrow[d,"p"] \\
        I_{1} \arrow[r,"v"] & X
    \end{tikzcd}
    \]
    Then, we have two vertices $u\left(0\right), u\left(2\right) \in N_{u\left(1\right)}$ such that $p \circ u \left(0\right) = p \circ u \left(2\right) = v\left(1\right) \in N_{p \circ u\left(1\right)} = N_{v\left(0\right)}$. By the hypothesis, $p|_{N_{u\left(1\right)}}$ is injective, and hence, we must have $u\left(0\right) = u\left(2\right)$. Thus, the above lifting problem has a solution.
    
    \item $\left(\Rightarrow\right)$ Let $y \in Y$ and $x = p\left(y\right) \in X$. Given any $x' \in N_{x}$, let $e \from I_{1} \to X$ be the graph map given by $e\left(0\right) = x$ and $e\left(1\right) = x'$, and consider the following lifting problem:
    \[
    \begin{tikzcd}
        I_{0} \arrow[r,"y"] \arrow[d,"0"'] & Y \arrow[d,"p"] \\
        I_{1} \arrow[r,"e"] & X
    \end{tikzcd}
    \]
    By hypothesis, there exists a solution to the lifting problem, say $\tilde{e} \from I_{1} \to Y$.
    Then, letting $y' = \tilde{e}\left(1\right)$ we have $y' \in N_{y}$ and also
    \[
        p\left(y'\right) = \left(p \circ \tilde{e}\right)\left(1\right) = e\left(1\right) = x'
    \] 
    Thus, $p|_{N_{y}}$ is surjective.
    
    $\left(\Leftarrow\right)$ Consider a lifting problem between $0 \from I_{0} \to I_{1}$ and $p \from Y \to X$ as follows:
    \[
    \begin{tikzcd}
        I_{0} \arrow[r,"u"] \arrow[d,"0"'] & Y \arrow[d,"p"] \\
        I_{1} \arrow[r,"v"] & X
    \end{tikzcd}
    \]
    Then, we have a vertex $v\left(1\right) \in N_{v\left(0\right)} = N_{p \circ u\left(0\right)}$. By the hypothesis, $p|_{N_{u\left(0\right)}}$ is surjective. Hence, there exists some vertex $y' \in N_{u\left(0\right)}$ such that $p\left(y'\right) = v\left(1\right)$. Thus, the above lifting problem has a solution $\tilde{v} \from I_{1} \to Y$ given by $\tilde{v}\left(0\right) = u\left(0\right)$ and $\tilde{v}\left(1\right) = y'$.
    \qedhere
\end{enumerate}
\end{proof}

\begin{lemma} \label{lem:equiv-defs-local-iso}
    Let $p \from Y \to X$ be a graph map.
    The following are equivalent:
    \begin{enumerate}
        \item $p$ has the unique RLP against $0 \from I_{0} \to I_{n}$ for all $n \in \mathbb{N}$.
        \item $p$ has the unique RLP against $0 \from I_{0} \to I_{1}$.
        \item $p$ has RLP against $0 \from I_{0} \to I_{1}$ and $\gbox_{1,0} \from I_{2} \to I_{1}$.
        \item For every vertex $y \in Y$, the restriction $p|_{N_{y}} \from N_{y} \to N_{p\left(y\right)}$ is an isomorphism.
    \end{enumerate}
\end{lemma}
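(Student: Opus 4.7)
The plan is to prove the four conditions equivalent by establishing the cyclic chain $(1) \Rightarrow (2) \Rightarrow (3) \Rightarrow (4) \Rightarrow (1)$, leveraging \cref{RLP-nbhd} to connect the lifting conditions to local properties of $p$.

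The implication $(1) \Rightarrow (2)$ is immediate by taking $n=1$. For $(2) \Rightarrow (3)$, the RLP against $0 \from I_0 \to I_1$ is part of the hypothesis, so I only need to extract the RLP against $\gbox_{1,0} \from I_2 \to I_1$. Given a lifting square with top edge $u \from I_2 \to Y$ and bottom edge $v \from I_1 \to X$ (where $v(0) = p(u(1))$ and $v(1) = p(u(0)) = p(u(2))$), I observe that both maps $I_1 \to Y$ sending $0 \mapsto u(1), 1 \mapsto u(0)$ and $0 \mapsto u(1), 1 \mapsto u(2)$ are lifts of $v$ extending the vertex $u(1)$. By uniqueness in (2), these agree, so $u(0) = u(2)$ and the constant-on-fibre lift $I_1 \to Y$ sending $0 \mapsto u(1), 1 \mapsto u(0)$ solves the square.

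For $(3) \Rightarrow (4)$, \cref{RLP-nbhd} shows that the two RLP assumptions are precisely equivalent to $p|_{N_y} \from N_y \to N_{p(y)}$ being, respectively, surjective and injective for every $y \in Y$, hence bijective on vertices. To upgrade this to a graph isomorphism, I note that the neighbourhood $N_y$ is a star centered at $y$ (its edges are exactly those of $X$ containing $y$, so the only adjacencies inside $N_y$ are between $y$ and its neighbours); similarly $N_{p(y)}$ is a star centered at $p(y)$. Since $p|_{N_y}$ sends $y$ to $p(y)$ and is a bijection on vertex sets, the bijection automatically preserves and reflects the only possible adjacencies, so it is an isomorphism of graphs.

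Finally, for $(4) \Rightarrow (1)$, I argue by induction on $n$. The case $n=0$ is trivial. For the inductive step, given a lifting problem
\[
\begin{tikzcd}
    I_0 \arrow[r,"y"] \arrow[d,"0"'] & Y \arrow[d,"p"] \\
    I_n \arrow[r,"\gamma"] & X
\end{tikzcd}
\]
with $\gamma(0) = p(y)$, I inductively define $\tilde\gamma(0) = y$ and, assuming $\tilde\gamma(k)$ is constructed with $p(\tilde\gamma(k)) = \gamma(k)$, I use (4) to obtain a unique $\tilde\gamma(k+1) \in N_{\tilde\gamma(k)}$ with $p(\tilde\gamma(k+1)) = \gamma(k+1) \in N_{\gamma(k)}$. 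This produces a graph map $\tilde\gamma \from I_n \to Y$ lifting $\gamma$, and the uniqueness at each step forces uniqueness of the lift. The main subtlety is the step $(3) \Rightarrow (4)$: one must be careful that a bijective graph map need not be an isomorphism in general, but the specific star structure of $N_y$ rescues the argument cleanly.
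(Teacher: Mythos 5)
Your proof is correct and follows essentially the same route as the paper: $(1)\Rightarrow(2)$ by specialization, $(2)\Rightarrow(3)$ by comparing the two lifts of an edge through the middle vertex $u(1)$ and invoking uniqueness, $(3)\Rightarrow(4)$ via \cref{RLP-nbhd}, and $(4)\Rightarrow(1)$ by the inductive construction of the unique lift. Your extra care in $(3)\Rightarrow(4)$ --- noting that the vertex bijection is automatically a graph isomorphism because $N_y$ and $N_{p(y)}$ are stars whose only edges involve the centre --- fills in a detail the paper leaves implicit, and is accurate.
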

\begin{proof} ~
    \begin{itemize}
    \item[$\left(1\right) \Rightarrow \left(2\right)$] is immediate.
        
    \item[$\left(2\right) \Rightarrow \left(3\right)$] $p$ has RLP against $0 \from I_{0} \to I_{1}$ by assumption. So, it suffices to show that it has RLP against $\gbox_{1,0} \from I_{2} \to I_{1}$. To this end, consider a lifting problem as follows:
    \[
    \begin{tikzcd}
        I_{2} \arrow[r,"u"] \arrow[d,"\gbox_{1,0}"'] & Y \arrow[d,"p"] \\
        I_{1} \arrow[r,"v"] & X
    \end{tikzcd}
    \]
    Note that $\gbox_{1,0}\left(1\right) = 0$ and $\gbox_{1,0}\left(0\right) = 1 = \gbox_{1,0}\left(2\right)$.
    Thus, the above lifting problem admits a solution $h \from I_{1} \to Y$ if and only if we have $u\left(0\right) = u\left(2\right)$.
    
    To see that $u\left(0\right) = u\left(2\right)$, we consider the following diagram:
    \[
    \begin{tikzcd}
       I_{0} \arrow[r,"1"] \arrow[d,"0"'] & I_{2} \arrow[r,"u"] \arrow[d,"\gbox_{1,0}" description] & Y \arrow[d,"p"] \\
        I_{1} \arrow[r,equals] & I_{1} \arrow[r,"v"] & X
    \end{tikzcd}
    \]
    This defines a lifting problem between $0 \from I_{0} \to I_{1}$ and $p \from Y \to X$. There exist two distinct solutions to the lifting problem given by the left part of the diagram, namely $e_{2} \from I_{1} \to I_{2}$ and $\overline{e_{1}} \from I_{1} \to I_{2}$. It follows that $u \circ e_{2}$ and $u \circ \overline{e_{1}}$ are both solutions to the lifting problem between $0 \from I_{0} \to I_{1}$ and $p \from Y \to X$.
    But, since $p$ was assumed to have the unique RLP against $0 \from I_{0} \to I_{1}$, they must be equal. So, we have:
    \[
        u\left(0\right) = \left(u \circ \overline{e_{1}}\right)\left(1\right) = \left(u \circ e_{2}\right)\left(1\right) = u\left(2\right)
    \]
    as required.
        
    \item[$\left(3\right) \Rightarrow \left(4\right)$] This follows from \cref{RLP-nbhd}.
    
    \item[$\left(4\right) \Rightarrow \left(1\right)$] Consider the following lifting problem:
    \[
    \begin{tikzcd}
        I_{0} \arrow[r,"y"] \arrow[d,"0"'] & Y \arrow[d,"p"] \\
        I_{n} \arrow[r,"\gamma"] & X
    \end{tikzcd}
    \]
    We want to construct the unique lift $\tilde{\gamma} \from I_{n} \to Y$.
    We construct it inductively.
    Note that we must have $\tilde{\gamma}\left(0\right) = y$, giving us the base case.
    Suppose we have constructed $\tilde{\gamma}\left(i-1\right) \in Y$ for some $i \in \left\{1,\ldots,n\right\}$.
    Let $y_{i-1} = \tilde{\gamma}\left(i-1\right)$ and $x_{i-1} = \gamma\left(i-1\right)$.
    Then, since $p|_{N_{y_{i-1}}} \from N_{y_{i-1}} \to N_{x_{i-1}}$ is an isomorphism, there is a unique vertex $y_{i} \in N_{y_{i-1}}$ such that $p\left(y_{i}\right) = \gamma\left(i\right) \in N_{x_{i-1}}$.
    Then, we must define $\tilde{\gamma}\left(i\right) = y_{i}$.
    Thus, the unique lift $\tilde{\gamma} \from I_{n} \to Y$ is given by the following recursive formula:
    \[
        \tilde{\gamma}\left(i\right) = \begin{cases}
            y & \text{if } i = 0 \\
            \left(p|_{N_{\tilde{\gamma}\left(i-1\right)}}\right)^{-1}\left(\gamma\left(i\right)\right) & \text{if } i = 1, \ldots, n
        \end{cases}
        \qedhere
    \]
    \end{itemize}
\end{proof}

\begin{definition}
    A map $p \from Y \to X$ is a \emph{local isomorphism} if it satisfies any of the equivalent conditions in \cref{lem:equiv-defs-local-iso}.
\end{definition}

We emphasize, again, that our definition of a local isomorphism captures what other authors, e.g., \cite{barcelo-greene-jarrah-welker:vanishing,hardeman:lifting} refer to as a covering graph.
Some authors also require surjectivity in their definition of covering graphs to avoid dealing with empty fibers, but this additional requirement does not play any serious role in their treatments.
For us, a covering graph is a stronger notion, presented in \cref{def:covering}.

\begin{lemma}[Path lifting property, cf.~{\cite[Thm.~3.0.9]{hardeman:lifting}}] \label{lem:path-lifting-property}
    Let $p \from Y \to X$ be a local isomorphism.
    Given any path $\gamma \from x \rightsquigarrow x'$ in $X$ and any vertex $y \in p^{-1}\left(x\right)$, there exists a unique path $\tilde{\gamma}$ in $Y$ starting at $y$ such that $p \circ \tilde{\gamma} = \gamma$.
\end{lemma}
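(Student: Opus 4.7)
The plan is to observe that for paths of finite length, this statement is essentially a rephrasing of condition (1) in \cref{lem:equiv-defs-local-iso}, which says that $p$ has unique RLP against $0 \from I_0 \to I_n$ for all $n \in \mathbb{N}$. Given a finite path $\gamma \from I_n \to X$ and a vertex $y \in p^{-1}(x)$, viewed as a graph map $y \from I_0 \to Y$, the starting condition $\gamma(0) = x = p(y)$ says exactly that the square
\[
\begin{tikzcd}
    I_0 \arrow[r,"y"] \arrow[d,"0"'] & Y \arrow[d,"p"] \\
    I_n \arrow[r,"\gamma"] & X
\end{tikzcd}
\]
commutes. The unique RLP then supplies a unique lift $\tilde{\gamma} \from I_n \to Y$, which by construction starts at $y$ and satisfies $p \circ \tilde{\gamma} = \gamma$.

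For a path of length $\infty$, I would reduce to the finite case using the stabilization data. Given $\gamma \from I_\infty \to X$ with $\gamma(i) = x$ for $i \leq N_-$ and $\gamma(i) = x'$ for $i \geq N_+$, form the finite restricted path $\gamma'$ of length $N_+ - N_-$ defined by $\gamma'(i) = \gamma(i + N_-)$. Lift $\gamma'$ uniquely to $\tilde{\gamma}' \from I_{N_+ - N_-} \to Y$ starting at $y$ via the finite case, and set $y' = \tilde{\gamma}'(N_+ - N_-) \in p^{-1}(x')$. Extend by constants on either side: define $\tilde{\gamma}(i) = y$ for $i \leq N_-$, $\tilde{\gamma}(i) = \tilde{\gamma}'(i - N_-)$ for $N_- \leq i \leq N_+$, and $\tilde{\gamma}(i) = y'$ for $i \geq N_+$. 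This yields a path of length $\infty$ in $Y$ lifting $\gamma$ and starting at $y$, and uniqueness follows because any such lift restricts on $[N_-, N_+]$ to a lift of $\gamma'$ starting at $y$, which must agree with $\tilde{\gamma}'$, and is then forced to be constant outside by the stabilization condition together with the local isomorphism property on neighbourhoods of $y$ and $y'$.

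No substantive obstacle is anticipated here: the real work was carried out in establishing the equivalent characterizations of local isomorphisms, and once the unique RLP against every $0 \from I_0 \to I_n$ is in hand, path lifting follows formally. The only routine verifications are that the proposed gluing in the infinite case is a well-defined graph map and that the extension outside $[N_-, N_+]$ is uniquely determined by the data.
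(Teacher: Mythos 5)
Your proposal is correct and takes essentially the same approach as the paper: the paper's entire proof is the one-line observation that the lemma is just the unique RLP against $0 \from I_{0} \to I_{n}$ from \cref{lem:equiv-defs-local-iso}, exactly as in your finite case. Your extra treatment of length-$\infty$ paths (lift the finite truncation, extend by constants, uniqueness forced by injectivity of $p$ on neighbourhoods) is sound but goes beyond what the paper records.
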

\begin{proof}
    This follows from the fact that local isomorphisms have the unique RLP against $0 \from I_{0} \to I_{n}$ for all $n \in \mathbb{N}$.
\end{proof}

\begin{lemma} \label{lem:equiv-defs-covering}
    Let $p \from Y \to X$ be a graph map.
    The following are equivalent:
    \begin{enumerate}
        \item[(1)] The induced map $p_{*} \from \ghom{I_{n}}{Y} \to \ghom{I_{n}}{X}$ is a local isomorphism for all $n \in \mathbb{N}$.
        \item[(1')] $p$ has the unique RLP against $0 \gtimes \text{id} \from I_{0} \gtimes I_{n} \to I_{1} \gtimes I_{n}$ for all $n \in \mathbb{N}$.
        \item[(2)] The induced map $p_{*} \from \ghom{I_{n}}{Y} \to \ghom{I_{n}}{X}$ is a local isomorphism for $n = 0, 1$.
        \item[(2')] $p$ has the unique RLP against $0 \gtimes \text{id} \from I_{0} \gtimes I_{n} \to I_{1} \gtimes I_{n}$ for $n = 0, 1$.
        \item[(3)] $p$ has the unique RLP against $0 \from I_{0} \to I_{1}$ and $\gbox_{1,1} \from I_{3} \to I_{1} \gtimes I_{1}$.
        \item[(4)] $p$ has RLP against $0 \from I_{0} \to I_{1}$, $\gbox_{1,0} \from I_{2} \to I_{1}$, and $\gbox_{1,1} \from I_{3} \to I_{1} \gtimes I_{1}$.
    \end{enumerate}
\end{lemma}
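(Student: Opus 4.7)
The plan is to close the equivalences through three clusters: $(1) \Leftrightarrow (1')$ and $(2) \Leftrightarrow (2')$ via the closed monoidal adjunction, $(2') \Rightarrow (1')$ by direct construction (with $(1) \Rightarrow (2)$ trivial), and $(2') \Leftrightarrow (3) \Leftrightarrow (4)$ via hands-on lifting constructions.

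The equivalences $(1) \Leftrightarrow (1')$ and $(2) \Leftrightarrow (2')$ are immediate from the closed monoidal structure (\cref{prop:graph-closed-monoidal}) together with \cref{lem:equiv-defs-local-iso}: the adjunction $(-) \gtimes I_{n} \adj \ghom{I_{n}}{-}$ transposes a lifting square of $p$ against $0 \gtimes \id \from I_{0} \gtimes I_{n} \to I_{1} \gtimes I_{n}$ into a lifting square of $p_{*} \from \ghom{I_{n}}{Y} \to \ghom{I_{n}}{X}$ against $0 \from I_{0} \to I_{1}$, and this bijection of lifts preserves uniqueness; the latter condition on $p_{*}$ is precisely that $p_{*}$ is a local isomorphism. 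Trivially $(1) \Rightarrow (2)$.

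For $(2') \Rightarrow (1')$, given $u \from I_{n} \to Y$ and $v \from I_{1} \gtimes I_{n} \to X$ with $pu = v \circ (0 \gtimes \id)$, I use the $n = 0$ case of $(2')$ (which is just local isomorphism) to define $\tilde{v}(1, i)$ as the unique vertex in $N_{u(i)}$ with $p \tilde{v}(1, i) = v(1, i)$. To see that the resulting vertex map is a graph map, I apply the $n = 1$ case to each subsquare on $\{(\varepsilon, j) : \varepsilon \in \{0,1\},\, j \in \{i, i+1\}\}$: the unique square-lift it produces must agree with $\tilde{v}$ at the top-left and top-right vertices by local-isomorphism uniqueness, so the top edge of the lifted square furnishes the required adjacency $\tilde{v}(1, i) \sim \tilde{v}(1, i+1)$ (or equality). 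Uniqueness of $\tilde{v}$ as a lift against $0 \gtimes \id$ follows from the $n = 0$ case applied vertex-by-vertex.

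For $(2') \Rightarrow (3)$, given a lifting problem against $\gbox_{1,1}$ with input $u \from I_{3} \to Y$ and $v \from I_{1} \gtimes I_{1} \to X$, I restrict $u$ to the middle edge $\{1, 2\} \cong I_{1}$, which $\gbox_{1, 1}$ sends onto the left side $(0, 0) \sim (0, 1)$ of the square. Applying $(2')$ at $n = 1$ yields a unique lift $\tilde{v} \from I_{1} \gtimes I_{1} \to Y$, and the identifications $\tilde{v}(1, 0) = u(0)$ and $\tilde{v}(1, 1) = u(3)$ follow from local-isomorphism uniqueness along the outer edges of $I_{3}$, showing $\tilde{v} \circ \gbox_{1, 1} = u$. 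Uniqueness of such a lift is automatic since $\gbox_{1, 1}$ is surjective on vertices. Next, $(3) \Rightarrow (4)$ is obtained by forgetting uniqueness and invoking \cref{lem:equiv-defs-local-iso} to recover RLP against $\gbox_{1, 0}$ from the unique RLP against $0 \from I_{0} \to I_{1}$. Finally, for $(4) \Rightarrow (2')$, local isomorphism (which $(4)$ provides via the previous lemma) uniquely lifts the two side edges $(0, \varepsilon) \sim (1, \varepsilon)$ of the square from the endpoints $u(\varepsilon)$ to yield candidate vertices $\tilde{v}(1, 0), \tilde{v}(1, 1)$; these together with $u$ assemble into a graph map $w \from I_{3} \to Y$ with $pw = v \circ \gbox_{1, 1}$, and RLP against $\gbox_{1, 1}$ fills in the square to give the required $\tilde{v}$, whose uniqueness is inherited from uniqueness of the side-edge lifts.

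The main delicate point is the uniqueness bookkeeping in $(2') \Rightarrow (3)$ and $(4) \Rightarrow (2')$: in each, the lift is produced by an auxiliary lifting problem, and matching the resulting data with the originally prescribed input relies on invoking local-isomorphism uniqueness at carefully chosen edges.
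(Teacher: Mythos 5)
Your proposal is correct, and its main ingredients are the same as the paper's: transposing lifting problems across the closed monoidal adjunction of \cref{prop:graph-closed-monoidal} for $(1)\Leftrightarrow(1')$ and $(2)\Leftrightarrow(2')$, uniqueness of edge lifts supplied by \cref{lem:equiv-defs-local-iso}, and the fact that a lift against $\gbox_{1,1}$ is forced on vertices; indeed your $(2')\Rightarrow(3)$ is essentially the paper's argument (restrict $u$ to the middle edge of $I_3$, take the unique square lift, and match the two remaining corners using uniqueness of edge lifts). Where you genuinely differ is in how the cycle is closed. The paper proves $(2')\Rightarrow(3)\Rightarrow(1')$, with $(3)\Rightarrow(1')$ building the lift of $I_1\gtimes I_n$ column by column and invoking the RLP against $\gbox_{1,1}$ to verify that consecutive lifted columns are adjacent, and then settles $(3)\Leftrightarrow(4)$ in one line (RLP against $0$ and $\gbox_{1,0}$ yields a local isomorphism, and solutions against $\gbox_{1,1}$ are automatically unique). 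You instead prove $(2')\Rightarrow(1')$ directly, where the column-gluing is easier because the $n=1$ case of $(2')$ already provides full square lifts whose corners are pinned down by local-isomorphism injectivity; the price is that you must then prove $(4)\Rightarrow(2')$ by a separate construction (lift the two side edges, assemble a map on $I_3$, and fill with the $\gbox_{1,1}$-RLP), which is in effect the $n=1$ instance of the paper's $(3)\Rightarrow(1')$ step. The total work is comparable: your decomposition isolates $(4)$ as the minimal generating package and makes the passage from it to square lifts explicit, while the paper concentrates the technical gluing in the single implication $(3)\Rightarrow(1')$ and obtains $(4)$ almost for free.
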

\begin{proof}
    By \cref{prop:graph-closed-monoidal}, we have a bijective correspondence between the following lifting problems:
    \[
    \begin{tikzcd}
        I_{0} \gtimes I_{n} \arrow[r] \arrow[d,"0 \gtimes \text{id}"'] & Y \arrow[d,"p"]  \\
        I_{1} \gtimes I_{n} \arrow[r] & X
    \end{tikzcd}
    \quad \leftrightsquigarrow \quad
    \begin{tikzcd}
        I_{0} \arrow[r] \arrow[d,"0"'] & \ghom{I_{n}}{Y} \arrow[d,"p_{*}"] \\
        I_{1} \arrow[r] & \ghom{I_{n}}{X}
    \end{tikzcd}
    \]
    Furthermore, either lifting problem admits a (unique) solution if and only if its counterpart admits a (unique) solution.
    Thus, we have the equivalences $\left(1\right) \Leftrightarrow \left(1'\right)$ and $\left(2\right) \Leftrightarrow \left(2'\right)$.
    
    \begin{itemize}
        \item[$\left(1'\right) \Rightarrow \left(2'\right)$]  is immediate.
        
        \item[$\left(2'\right) \Rightarrow \left(3\right)$] Note that for $n=0$, the map $0 \gtimes \text{id} \from I_{0} \gtimes I_{n} \to I_{1} \gtimes I_{n}$ equals the map $0 \from I_{0} \to I_{1}$. So, it suffices to show that $p$ has the unique RLP against $\gbox_{1,1} \from I_{3} \to I_{1} \gtimes I_{1}$. To this end, consider a lifting problem as follows:
        \[
        \begin{tikzcd}
            I_{3} \arrow[r,"u"] \arrow[d,"\gbox_{1,1}"'] & Y \arrow[d,"p"] \\
            I_{1} \gtimes I_{1} \arrow[r,"v"] & X
        \end{tikzcd}
        \]
        Note that this lifting problem has a unique solution if and only if either $u\left(0\right) \sim u\left(3\right)$ is an edge in $Y$ or if $u\left(0\right) = u\left(3\right)$.

        To see that either $u\left(0\right) \sim u\left(3\right)$ is an edge in $Y$ or $u\left(0\right) = u\left(3\right)$, consider the following diagram:
        \[
        \begin{tikzcd}
            I_{0} \gtimes I_{1} \cong I_{1} \arrow[r,"e_{2}"] \arrow[d,"0 \gtimes \text{id}"'] & I_{3} \arrow[r,"u"] \arrow[d,"\gbox_{1,1}" description] & Y \arrow[d,"p"] \\
            I_{1} \gtimes I_{1} \arrow[r,equal] & I_{1} \gtimes I_{1} \arrow[r,"v"] & X
        \end{tikzcd}
        \]
        This defines a lifting problem between $0 \gtimes \text{id} \from I_{0} \gtimes I_{1} \to I_{1} \gtimes I_{1}$ and $p$. By the assumption, this lifting problem has a unique solution. So, there exists a map $h \from I_{1} \gtimes I_{1} \to Y$ that makes the above diagram commute.
        Thus, either $h\left(1,1\right) \sim h\left(1,0\right)$ is an edge in $Y$ or we have $h\left(1,1\right) = h\left(1,0\right)$. We will now show that $h\left(1,0\right) = u\left(0\right)$ and that $h\left(1,1\right) = u\left(3\right)$.
        
        We know that $h\left(0,0\right) = u\left(1\right)$ and that $p \circ h = v$.
        Thus, both $u \circ \overline{e_{1}} \from I_{1} \to Y$ and $h \circ \left(\text{id} \gtimes 0\right) \from I_{1} \gtimes I_{0} \cong I_{1} \to Y$ are solutions to the following lifting problem:
        \[
        \begin{tikzcd}[column sep = large]
            I_{0} \arrow[r,"u\left(1\right)"] \arrow[d,"0"'] & Y \arrow[d,"p"]  \\
            I_{1} \arrow[r,"v \circ \left(\text{id} \gtimes 0\right)"] & X
        \end{tikzcd}
        \]
        Since $p$ has the unique RLP against $0 \from I_{0} \to I_{1}$ by the assumption, the two solutions must coincide and in particular, we have:
        \[
            h\left(1,0\right) = \left(h \circ \left(\text{id} \gtimes 0\right)\right)\left(1\right) = \left(u \circ \overline{e_{1}}\right)\left(1\right) = u\left(0\right)
        \]
        A similar argument shows that $h\left(1,1\right) = u\left(3\right)$.
        
        \item[$\left(3\right) \Rightarrow \left(1'\right)$] Consider the following lifting problem:
        \[
        \begin{tikzcd}
            I_{0} \gtimes I_{n} \arrow[r,"u"] \arrow[d,"0 \gtimes \text{id}"'] & Y \arrow[d,"p"] \\
            I_{1} \gtimes I_{n} \arrow[r,"v"] & X
        \end{tikzcd}
        \]
        We want to construct a lift $h \from I_{1} \gtimes I_{n} \to Y$ and show that it is unique.
        
        Note that any solution $h \from I_{1} \gtimes I_{n} \to Y$ must necessarily satisfy the condition $h\left(0,i\right) = u\left(0,i\right)$ for $i = 0,\ldots,n$.
        
        For each $i = 0, \ldots, n$, consider the following diagram:
        \[
        \begin{tikzcd}
            I_{0} \gtimes I_{0} \arrow[r,"\text{id} \gtimes i"] \arrow[d,"0 \gtimes \text{id}"'] & I_{0} \gtimes I_{n} \arrow[r,"u"] \arrow[d,"0 \gtimes \text{id}" description] & Y \arrow[d,"p"] \\
            I_{1} \gtimes I_{0} \arrow[r,"\text{id} \gtimes i"] & I_{1} \gtimes I_{n} \arrow[r,"v"] & X
        \end{tikzcd}
        \]
        Since $p$ has the unique RLP against $0 \gtimes \text{id} \from I_{0} \gtimes I_{0} \to I_{1} \gtimes I_{0}$ by hypothesis, there exists a unique lift $h_{i} \from I_{1} \cong I_{1} \gtimes I_{0} \to Y$.
        
        Note that if a solution $h \from I_{1} \gtimes I_{n} \to Y$ to the original lifting problem exists, then the composite $h \circ \left(\text{id} \gtimes i\right)$ is also a solution to the second lifting problem.
        By uniqueness of solutions, we must have $h \circ \left(\text{id} \gtimes i\right) = h_{i}$.
        In other words, we must necessarily define $h\left(1,i\right) = h_{i}\left(1\right)$ for $i = 0, \ldots, n$.
        
        It only remains to verify that for each $i = 1, \ldots, n$, either $h\left(1,i-1\right) \sim h\left(1,i\right)$ is an edge in $Y$ or we have $h\left(1,i-1\right) = h\left(1,i\right)$.
        
        To this end, let $\tilde{h}_{i} \from I_{3} \to Y$ be the map given by the concatenation $\tilde{h}_{i} = \overline{h_{i-1}} \ast u_{i} \ast h_{i}$, where $u_{i} \from I_{1} \to Y$ is the composite $u_{i} = u \circ \left(\text{id} \gtimes e_{i}\right) \from I_{1} \cong I_{0} \gtimes I_{1} \to Y$, for $i = 1, \ldots, n$.
        Observe that:
        \[
            \tilde{h}_{i}\left(0\right) = h_{i-1}\left(1\right) = h\left(i-1,1\right), \qquad \tilde{h}_{i}\left(3\right) = h_{i}\left(1\right) = h\left(i,1\right)
        \]
        We can consider the following lifting problem for $i = 1, \ldots, n$:
        \[
        \begin{tikzcd}[column sep = large]
            I_{3} \arrow[r,"\tilde{h}_{i}"] \arrow[d,"\gbox_{1,1}"'] & Y \arrow[d,"p"] \\
            I_{1} \gtimes I_{1} \arrow[r,"v \circ \left(\text{id} \gtimes e_{i}\right)"] & X
        \end{tikzcd}
        \]
        By hypothesis, $p$ has the unique RLP against $\gbox_{1,1} \from I_{3} \to I_{1} \gtimes I_{1}$, and hence either $\tilde{h}_{i}\left(0\right) \sim \tilde{h}_{i}\left(3\right)$ is an edge in $Y$ or we have $\tilde{h}_{i}\left(0\right) = \tilde{h}_{i}\left(3\right)$, as required.
        
        \item[$\left(3\right) \Leftrightarrow \left(4\right)$] This follows from \cref{lem:equiv-defs-local-iso} and the observation that any solution to a lifting problem against $\gbox_{1,1} \from I_{3} \to I_{1} \gtimes I_{1}$ is necessarily unique.
        \qedhere
    \end{itemize}
\end{proof}

\begin{definition} \label{def:covering}
    A map $p \from Y \to X$ is a \emph{covering map} if it satisfies any of the equivalent conditions in \cref{lem:equiv-defs-covering}.
    In such a case, the graph $X$ is called the \emph{base graph} and the graph $Y$ is called the \emph{covering graph}.
\end{definition}

\begin{remark}
    Every covering map is a local isomorphism.
    A local isomorphism is a covering map if and only if it also has RLP against $\gbox_{1,1} \from I_{3} \to I_{1} \gtimes I_{1}$.
    In particular, any local isomorphism $p \from Y \to X$ where the base graph $X$ contains no 3-cycles or 4-cycles is a covering map.
\end{remark}

\begin{example}
    The graph map $p \from I_{\infty} \to C_{n};\ i \mapsto i\ (\text{mod } n)$ is a local isomorphism for all $n \geq 3$, but a covering map only for $n \geq 5$.
    In particular, this shows how our definition of a covering graph differs from the one given in \cite{barcelo-kramer-laubenbacher-weaver,hardeman:lifting}.
\end{example}

\begin{example}
    The graph maps $C_{3n} \to C_{3};\ i \mapsto i\ (\text{mod } 3)$ and $C_{4n} \to C_{4};\ i \mapsto i\ (\text{mod } 4)$ are both local isomorphisms for every $n \geq 1$, but covering maps only for $n = 1$.
\end{example}

We now prove the homotopy lifting property of covering graphs.
This property is known to fail for local isomorphisms if the base graph contains a 3- or 4-cycle.
In fact, our definition of a covering graph was greatly influenced by closely examining Hardeman's proof of the analogous property for local isomorphisms under the additional assumption that the base graph does not contain any 3- or 4-cycles \cite[Thm.~3.0.10]{hardeman:lifting}.

\begin{lemma}[Homotopy lifting property] \label{lem:homotopy-lifting-property}
    Let $p \from Y \to X$ be a covering map.
    Given any two paths $\gamma, \sigma \from x \rightsquigarrow x'$ in $X$, a vertex $y \in p^{-1}\left(x\right)$, and two paths $\tilde{\gamma}, \tilde{\sigma}$ in $Y$ starting at $y$ such that $p \circ \tilde{\gamma} = \gamma$ and $p \circ \tilde{\sigma} = \sigma$, if we have $\left[\gamma\right] = \left[\sigma\right]$ in $\pi_{0}P_{\mathbb{N}}X\left(x,x'\right)$, then the paths $\tilde{\gamma}$ and $\tilde{\sigma}$ have the same endpoint $y' \in p^{-1}\left(x'\right)$ and we have $\left[\tilde{\gamma}\right] = \left[\tilde{\sigma}\right]$ in $\pi_{0}P_{\mathbb{N}}Y\left(y,y'\right)$.
\end{lemma}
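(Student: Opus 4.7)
The plan is to build up from atomic moves: a single adjacency in some finite path graph $P_m X(x, x')$, and a single reparametrization. Once these two cases are in hand, the general conclusion follows by iterating along a path of adjacencies in the quotient graph $P_{\mathbb{N}} X(x, x')$ that witnesses the equality $[\gamma] = [\sigma]$ in $\pi_0 P_{\mathbb{N}} X(x, x')$.

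For the adjacency case, suppose $\gamma, \sigma \in P_m X(x, x')$ are connected by an edge, witnessed by a map $H \from I_m \gtimes I_1 \to X$ with $H(-, 0) = \gamma$ and $H(-, 1) = \sigma$. I would lift $H$ to $Y$ using the full strength of \cref{lem:equiv-defs-covering}: by condition (1'), together with symmetry of the box product, $p$ has the unique RLP against $\id \gtimes 0 \from I_m \gtimes I_0 \to I_m \gtimes I_1$, so there is a unique $\tilde{H} \from I_m \gtimes I_1 \to Y$ with $p \circ \tilde{H} = H$ and $\tilde{H}(-, 0) = \tilde{\gamma}$. Because each $H(-, i)$ is a path from $x$ to $x'$, the restrictions $H(0, -)$ and $H(m, -)$ are the constant paths $c_x$ and $c_{x'}$ respectively; unique path lifting (\cref{lem:path-lifting-property}) then forces $\tilde{H}(0, -) = c_y$ and $\tilde{H}(m, -) = c_{y'}$, where $y'$ is the endpoint of $\tilde{\gamma}$. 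Consequently $\tilde{H}(-, 1)$ is a lift of $\sigma$ starting at $y$, so by unique path lifting again it coincides with $\tilde{\sigma}$, forcing $\tilde{\sigma}$ to end at $y'$. The map $\tilde{H}$ itself is then a path-homotopy $\tilde{\gamma} \Rightarrow \tilde{\sigma}$, giving $[\tilde{\gamma}] = [\tilde{\sigma}]$ in $\pi_0 P_{\mathbb{N}} Y(y, y')$.

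For the reparametrization case, suppose $\sigma = \gamma \circ s$ for a shrinking map $s \from I_{m'} \to I_m$. Then $\tilde{\gamma} \circ s$ is a lift of $\sigma$ starting at $y$, and by uniqueness in \cref{lem:path-lifting-property} it coincides with $\tilde{\sigma}$. In particular $\tilde{\sigma}$ ends at the same vertex $y'$ as $\tilde{\gamma}$, and $\tilde{\gamma}$ and $\tilde{\sigma}$ are reparametrizations of each other, hence represent the same class in $\pi_0 P_{\mathbb{N}} Y(y, y')$.

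To finish, I would unpack the hypothesis $[\gamma] = [\sigma]$: it means there is a path in the graph $P_{\mathbb{N}} X(x, x')$ from $[\gamma]$ to $[\sigma]$, and since edges in this quotient graph come from edges within a single summand $P_n X(x, x')$, such a path decomposes as a finite alternation of reparametrizations and single-edge adjacencies in various finite path graphs. Applying the two cases above along this zig-zag, and using uniqueness of path lifting at each step to identify the various intermediate lifts in $Y$, one concludes simultaneously that $\tilde{\gamma}$ and $\tilde{\sigma}$ share an endpoint $y'$ and that $[\tilde{\gamma}] = [\tilde{\sigma}]$ in $\pi_0 P_{\mathbb{N}} Y(y, y')$. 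The main obstacle is the single-edge lifting step, since it is precisely where one needs the extra RLP against $\gbox_{1,1} \from I_3 \to I_1 \gtimes I_1$ distinguishing covering maps from local isomorphisms; without it, the lifted $\tilde{H}(-, 1)$ need not be a genuine path adjacent to $\tilde{\gamma}$ when $X$ contains 3- or 4-cycles.
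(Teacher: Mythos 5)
Your proof is correct, and it takes a noticeably different route from the paper's. The paper translates the hypothesis $[\gamma]=[\sigma]$ in $\pi_{0}P_{\mathbb{N}}X(x,x')$ into a single rectangle $H \from I_{m}\gtimes I_{n} \to X$ with $H(-,0)=\gamma\circ s$, $H(-,n)=\sigma\circ t$ and constant sides, and then lifts it in one step against $0\gtimes\text{id} \from I_{0}\gtimes I_{n} \to I_{m}\gtimes I_{n}$ (using condition (1) of \cref{lem:equiv-defs-covering} via \cref{lem:equiv-defs-local-iso}), with initial datum the constant path $c_{y}$ along the side $\{0\}\gtimes I_{n}$; the endpoint side $\tilde{H}(m,-)$ is then seen to be constant because it lifts $c_{x'}$. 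You instead decompose the witness of $[\gamma]=[\sigma]$ into atomic moves --- single adjacencies in a fixed $P_{m}X(x,x')$ and single reparametrizations --- and lift each one-step homotopy in the homotopy direction (condition (1') plus symmetry of $\gtimes$), with initial datum the already-lifted path $\tilde{\gamma}$, using unique path lifting (\cref{lem:path-lifting-property}) to pin down the two constant sides and to handle the reparametrization steps. The same characterization of covering maps does the work in both arguments, so the difference is organizational rather than conceptual: the paper's version is shorter (one lifting problem), whereas yours makes explicit the step the paper glosses over, namely that connectivity in the quotient graph $P_{\mathbb{N}}X(x,x')$ unpacks into a finite zig-zag of reparametrizations and edges in finite path graphs (the paper implicitly assembles this zig-zag into the single homotopy $H$ with constant ends). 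The price of your approach is the extra bookkeeping of tracking the unique lift at $y$ and its endpoint through every atomic step; the benefit is that each step uses only the most elementary instance of the lifting property. One small imprecision in your closing remark: for a mere local isomorphism the issue is not that the lifted top edge $\tilde{H}(-,1)$ fails to be adjacent to $\tilde{\gamma}$, but that the square $\tilde{H}$ need not exist at all; this is a side comment and does not affect the proof.
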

\begin{proof}
    Since we have $\left[\gamma\right] = \left[\sigma\right]$ in $\pi_{0}P_{\mathbb{N}}X\left(x,x'\right)$, there must exist a path-homotopy $H \from I_{m} \gtimes I_{n} \to X$ such that
    \[
        H\left(-,0\right) = \gamma \circ s, \qquad
        H\left(-,n\right) = \sigma \circ t
    \]
    and
    \[
        H\left(0,-\right) = c_{x}, \qquad
        H\left(m,-\right) = c_{x'}
    \]
    where $s$ and $t$ are some shrinking maps.
    
    Consider the following lifting problem:
    \[
    \begin{tikzcd}
        I_{0} \gtimes I_{n} \arrow[r,"c_{y}"] \arrow[d,"0 \gtimes \text{id}"'] & Y \arrow[d,"p"] \\
        I_{m} \gtimes I_{n} \arrow[r,"H"] & X
    \end{tikzcd}
    \]
    
    Since $p \from Y \to X$ is a covering map, the induced map $p_{*} \from \ghom{I_{n}}{Y} \to \ghom{I_{n}}{X}$ is a local isomorphism for all $n \in \mathbb{N}$.
    Hence, the induced map $p_{*} \from \ghom{I_{n}}{Y} \to \ghom{I_{n}}{X}$ has the unique RLP against $0 \from I_{0} \to I_{m}$ for all $m, n \in \mathbb{N}$.
    Equivalently, $p \from Y \to X$ has the unique RLP against $0 \gtimes \text{id} \from I_{0} \gtimes I_{n} \to I_{m} \gtimes I_{n}$ for all $m, n \in \mathbb{N}$. Thus, the lifting problem described earlier has a unique solution. That is, we have a unique map $\tilde{H} \from I_{m} \gtimes I_{n} \to Y$ such that:
    \[
        p \circ \tilde{H} = H, \text{ and} \qquad \tilde{H}\left(0,-\right) = c_{y}.
    \]
    Also note that we have:
    \[
        \tilde{H}\left(-,0\right) = \tilde{\gamma} \circ s, \qquad \tilde{H}\left(-,n\right) = \tilde{\sigma} \circ t.
    \]
    
    Observe that the path $\tilde{H}\left(m,-\right) \from \left(\tilde{\gamma} \circ s\right) \left(n\right) \rightsquigarrow \left(\tilde{\sigma} \circ t\right) \left(n\right)$ satisfies $p \circ \tilde{H}\left(m,-\right) = c_{x'}$. Thus, it must also be a constant path at some $y' \in p^{-1}\left(x'\right)$. It follows that both $\tilde{\gamma}$ and $\tilde{\sigma}$ must have their endpoints at the same vertex $y' \in p^{-1}\left(x'\right)$ and that the map $\tilde{H}$ defines a path-homotopy $\tilde{\gamma} \circ s \Rightarrow \tilde{\sigma} \circ t$.
\end{proof}

\begin{proposition}
    For all $y_{0} \in Y$, a covering map $p \from Y \to X$ induces an isomorphism $A_n(Y, y_0) \to A_n(X, f(y_0)$ for $n \geq 2$ and a monomorphism for $n=1$.
\end{proposition}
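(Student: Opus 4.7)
The $n=1$ case is immediate from the homotopy lifting property (\cref{lem:homotopy-lifting-property}): given loops $\tilde{\alpha}, \tilde{\beta}$ based at $y_0$ in $Y$ whose composites with $p$ are path-homotopic in $X$, lifting the path-homotopy shows $\tilde{\alpha}$ and $\tilde{\beta}$ are themselves path-homotopic, so $[\tilde{\alpha}] = [\tilde{\beta}]$ in $A_1(Y, y_0)$.

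For $n \geq 2$, my plan rests on a key auxiliary lemma: for every graph $G$, the induced graph map $p_{*} \from \ghom{G}{Y} \to \ghom{G}{X}$ is a local isomorphism. To prove this, I would verify condition (4) of \cref{lem:equiv-defs-local-iso}, showing that for each vertex $f \from G \to Y$ the restriction $p_{*}|_{N_f} \from N_f \to N_{p \circ f}$ is a graph isomorphism. Since $p$ is a local isomorphism, each $N_{f(v)} \to N_{p(f(v))}$ is a graph iso, so any $g' \in N_{p \circ f}$ yields a vertex-wise candidate lift $\tilde{g}$. The only nontrivial step is checking that $\tilde{g}$ preserves adjacencies: for each edge $v \sim v'$ in $G$, the pair $\tilde{g}(v), \tilde{g}(v')$ must be adjacent or equal in $Y$. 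This reduces to lifting a (possibly degenerate) square in $X$ whose three corners have already been chosen in $Y$, which is precisely the RLP against $\gbox_{1,1}$ from \cref{def:covering} --- the step where the covering condition, rather than merely local isomorphism, is genuinely needed.

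Granting the key lemma, both injectivity and surjectivity for $n \geq 2$ follow from the same higher-dimensional lifting strategy, using \cref{equivalence-finite-infinite} to work with finite grids. For surjectivity, I would represent $[f] \in A_n(X, x_0)$ (where $x_0 := p(y_0)$) by a map $f \from I_N^n \to X$ with $f|_{\partial I_N^n} = x_0$, view it as a path $I_N \to \ghom{I_N^{n-1}}{X}$ based at the constant map $c_{x_0}$, and lift via \cref{lem:path-lifting-property} applied to the local isomorphism $p_{*} \from \ghom{I_N^{n-1}}{Y} \to \ghom{I_N^{n-1}}{X}$, starting at $c_{y_0}$. The boundary condition $\tilde{f}|_{\partial I_N^n} = y_0$ would follow from two observations: along any $v \in \partial I_N^{n-1}$, the slice $\tilde{f}(-, v)$ uniquely lifts the constant path at $x_0$ and must itself be constant at $y_0$; and the terminal face $\tilde{f}(N, -) \from I_N^{n-1} \to p^{-1}(x_0)$ lands in the discrete fiber and thus, by connectedness of $I_N^{n-1}$, is forced to be constant at $y_0$, since $\partial I_N^{n-1} \neq \emptyset$ precisely when $n \geq 2$. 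For injectivity, the same lifting strategy is applied to a path-homotopy $H \from I_N^n \gtimes I_M \to X$, viewed as a path in $\ghom{I_N^n}{X}$; lifting starting at $\tilde{\alpha}$ and using unique path lifting at basepoint vertices forces the endpoint to equal $\tilde{\beta}$.

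The principal obstacle is the key lemma, especially the square-filling step: one must set up the $\gbox_{1,1}$ lifting problem carefully so that the three lifted corners assemble into a valid graph map $I_3 \to Y$, handling both the case $f(v) = f(v')$ (where the square degenerates on one side) and the case $f(v) \sim f(v')$ in a uniform way.
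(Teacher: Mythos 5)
Your argument is essentially correct, but it is a genuinely different route from the one taken in the paper. The paper's proof is two lines: a covering map is a fibration in the sense of \cite{carranza-kapulkin:cubical-graphs} with discrete fibers, and the long exact sequence of a fibration proved there immediately gives an isomorphism on $A_n$ for $n\geq 2$ and a monomorphism on $A_1$. You instead reprove by hand exactly the special case of that machinery you need: your key lemma, that $p_{*} \from \ghom{G}{Y} \to \ghom{G}{X}$ is a local isomorphism for \emph{every} graph $G$, is correct, and your identification of the $\gbox_{1,1}$-lifting step (via \cref{lem:equiv-defs-covering}, applied to the square in $X$ on the vertices $pf(v), pf(v'), g'(v), g'(v')$ with the three lifted corners $\tilde{g}(v), f(v), f(v'), \tilde{g}(v')$ forming the $I_3$) as the precise place where the covering condition, rather than local isomorphism, is needed is exactly right; the rest of your outline (discreteness of fibers, rigidity of lifts via unique path lifting from a boundary vertex, and the side-face argument) goes through. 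What each approach buys: the paper's proof is short and situates the statement inside the fibration/long-exact-sequence framework (with the extra information that comes with it), at the cost of importing external machinery; yours is self-contained within this paper's toolkit and produces a reusable exponentiability statement for coverings. One detail to tighten in a write-up of your version: in the injectivity step for $n\geq 2$, equality of classes in the finite model $\pi_0\Omega^{n}_{\mathbb{N}}$ does not literally hand you a single path-homotopy $H \from I_N^{\gtimes n} \gtimes I_M \to X$, but only a zig-zag of edges and reparametrizations; either lift that zig-zag step by step (using the uniqueness of lifts fixed at one vertex, and that lifts of reparametrized maps are reparametrized lifts), or work in the $\Omega^{n}_{\infty}$ model of \cref{equivalence-finite-infinite}, where equality in $\pi_0$ is witnessed by an honest finite-length path in the loop graph that your hom-graph local isomorphism lets you lift directly.
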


\begin{proof}
    A covering map is a fibration of graphs in the sense of \cite[Thm.~5.9]{carranza-kapulkin:cubical-graphs} with discrete fiber.
    Using the long exact sequence of the fibration \cite[Thm.~5.13]{carranza-kapulkin:cubical-graphs}, we obtain the desired isomorphism.
\end{proof}

\subsection{The category of coverings over a fixed base graph}

Our next goal is to define and study properties of the category of coverings of a fixed graph.

\begin{definition} \label{def:map-coverings}
    Let $p \from Y \to X$ and $p' \from Y' \to X$ be two covering maps over the same base $X$.
    A \emph{morphism $f \from p \to p'$ of coverings over $X$} is a graph map $f \from Y \to Y'$ such that the following diagram commutes:
    \[
    \begin{tikzcd}[column sep = small]
        Y \arrow[dr,"p"'] \arrow[rr,"f"] && Y' \arrow[dl,"p'"] \\
        & X &
    \end{tikzcd}
    \]
    For any graph $X$, coverings over $X$ and morphisms between them form a category, denoted $\Cov\left(X\right)$.
    It is the full subcategory of the slice $\Graph \slice X$ on covering maps.
  
    Given a graph map $g \from X' \to X$, we have a functor $g^{*} \from \Cov\left(X\right) \to \Cov\left(X'\right)$.
    It maps a covering $p \from Y \to X$ over $X$ to the pullback $g^{*}p \from g^{*}Y \to X'$, which is a covering over $X'$.
    It is straightforward to verify that $g^{*}$ is a functor.
    Furthermore, the assignment $\left(g \from X' \to X\right) \mapsto \left(g^{*} \from \Cov\left(X\right) \to \Cov\left(X'\right)\right)$ is also functorial, and we have a well-defined functor
    \[
        \Cov \from \Graph^{\op} \to \Cat
    \]
\end{definition}

The next lemma is a well-known categorical fact about cancellation of morphisms with certain lifting property, which we chose to include here for completeness.

\begin{lemma}
    Let $\cat{C}$ be any category.
    Consider the following commutative triangle in $\cat{C}$:
    \[
    \begin{tikzcd}[column sep = small]
        X \arrow[dr,"p"'] \arrow[rr,"q"] && Y \arrow[dl,"r"] \\
        & Z &
    \end{tikzcd}
    \]
    If $p$ and $r$ have the unique RLP against some fixed morphism $i \from A \to B$, then so does $q$.
\end{lemma}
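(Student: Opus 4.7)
The plan is to perform the usual diagram chase, using the unique RLP of $p$ to produce the desired lift and the unique RLP of $r$ to verify one of its compatibility conditions.

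Concretely, start with an arbitrary lifting problem against $q$,
\[
\begin{tikzcd}
A \arrow[r,"u"] \arrow[d,"i"'] & X \arrow[d,"q"] \\
B \arrow[r,"v"] & Y
\end{tikzcd}
\]
First I would post-compose the right-hand column with $r$ to obtain a lifting problem against $p = rq$ with lower map $rv \from B \to Z$; commutativity of the new square follows because $p u = r q u = r v i$. Using the unique RLP of $p$ against $i$, I get a unique morphism $h \from B \to X$ satisfying $h i = u$ and $p h = r v$.

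Next I would show that $q h = v$. Both $q h$ and $v$ fit into the square
\[
\begin{tikzcd}
A \arrow[r,"qu"] \arrow[d,"i"'] & Y \arrow[d,"r"] \\
B \arrow[r,"rv"] & Z
\end{tikzcd}
\]
as lifts: indeed $q h \circ i = q u$ and $r \circ q h = p h = r v$, while trivially $v \circ i = q u$ (from the original square) and $r v = r v$. By the unique RLP of $r$ against $i$, these two lifts must coincide, so $q h = v$. Thus $h$ solves the original lifting problem.

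Finally, for uniqueness, suppose $h' \from B \to X$ is any other solution, so $h' i = u$ and $q h' = v$. Then $p h' = r q h' = r v$, so $h'$ is also a lift in the auxiliary diagram against $p$; by the unique RLP of $p$ we conclude $h' = h$. There is no real obstacle here: the only thing to be careful about is invoking the two unique lifting hypotheses in the right order, first against $p$ to build $h$, then against $r$ to constrain its behaviour along $q$.
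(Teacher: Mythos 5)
Your proof is correct and follows essentially the same argument as the paper: construct the lift $h$ via the unique RLP of $p$ against $i$, use the unique RLP of $r$ to conclude $qh = v$, and deduce uniqueness by observing that any solution against $q$ is also a solution against $p$. No issues.
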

\begin{proof}
    Consider a lifting problem between $i$ and $q$ as follows:
    \[
    \begin{tikzcd}
        A \arrow[r,"u"] \arrow[d,"i"'] & X \arrow[d,"q"] \arrow[dr,"p"] \\
        B \arrow[r,"v"] & Y \arrow[r,"r"] & Z
    \end{tikzcd}
    \]
    Since $p$ has unique $RLP$ against $i$, there exists a unique morphism $h \from B \to X$ such that $h \circ i = u$ and $p \circ h = r \circ v$.

    Observe that both $q \circ h$ and $v$ are solutions to the following lifting problem between $i$ and $r$:
    \[
    \begin{tikzcd}
        A \arrow[r,"q \circ u"] \arrow[d,"i"'] & Y \arrow[d,"r"] \\
        B \arrow[r,"r \circ v"] & Z
    \end{tikzcd}
    \]
    But since $r$ has the unique RLP against $i$, we must have $q \circ h = v$.

    Since we have both $h \circ i = u$ and $q \circ h = v$, it follows that $h \from B \to X$ is a solution to the original lifting problem between $i$ and $q$.
    Furthermore, it is unique since any solution to the lifting problem between $i$ and $q$ is also a solution to the corresponding lifting problem between $i$ and $p$, which admits a unique solution.
\end{proof}

Recalling that covering maps are defined to be those maps that have the unique RLP against $0 \from I_{0} \to I_{1}$ and $\gbox_{1,1} \from I_{3} \to I_{1} \gtimes I_{1}$, we have the following result:

\begin{proposition}
    Let $f \from p \to p'$ be a morphism in $\Cov\left(X\right)$ between two coverings $p \from Y \to X$ and $p' \from Y' \to X$.
    Then the underlying graph map $f \from Y \to Y'$ is itself a covering map.
    \qed
\end{proposition}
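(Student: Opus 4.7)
The plan is to apply the cancellation lemma immediately preceding this proposition to the defining lifting properties of a covering map. Recall from \cref{lem:equiv-defs-covering}(3) that a graph map is a covering map precisely when it has the unique RLP against the two morphisms $0 \from I_{0} \to I_{1}$ and $\gbox_{1,1} \from I_{3} \to I_{1} \gtimes I_{1}$. Thus, by hypothesis, both $p \from Y \to X$ and $p' \from Y' \to X$ have the unique RLP against each of these two morphisms.

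Since $f \from p \to p'$ is a morphism in $\Cov(X)$, the underlying graph map $f \from Y \to Y'$ fits into a commutative triangle
\[
\begin{tikzcd}[column sep = small]
    Y \arrow[dr,"p"'] \arrow[rr,"f"] && Y' \arrow[dl,"p'"] \\
    & X &
\end{tikzcd}
\]
to which the cancellation lemma applies. Taking $i$ to be $0 \from I_{0} \to I_{1}$ yields that $f$ has the unique RLP against $0 \from I_{0} \to I_{1}$, and taking $i$ to be $\gbox_{1,1} \from I_{3} \to I_{1} \gtimes I_{1}$ yields that $f$ has the unique RLP against $\gbox_{1,1}$. Invoking condition (3) of \cref{lem:equiv-defs-covering} in the other direction, we conclude that $f \from Y \to Y'$ is itself a covering map.

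There is no real obstacle here: the cancellation lemma is stated in exactly the generality needed, and the characterization of covering maps via unique RLP against a fixed pair of morphisms makes the conclusion immediate. The only thing to double-check is that the definition of covering map being used is indeed the unique-RLP characterization rather than the mere RLP one from condition (4), but \cref{lem:equiv-defs-covering} establishes that these are equivalent, so no further work is required.
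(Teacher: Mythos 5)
Your proof is correct and follows exactly the route the paper intends: the commutative triangle $p' \circ f = p$ together with the preceding cancellation lemma, applied to the unique RLP against $0 \from I_{0} \to I_{1}$ and $\gbox_{1,1} \from I_{3} \to I_{1} \gtimes I_{1}$, and the characterization of covering maps in condition (3) of \cref{lem:equiv-defs-covering}. Nothing is missing.
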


\begin{definition}\leavevmode
\begin{enumerate}
    \item Given a local isomorphism $p \from Y \to X$ and a path $\gamma \from x \rightsquigarrow x'$ in $X$, the \emph{unwinding of $\gamma$} is a set-function
    \[
        \mathsf{unw}_{\gamma} \from p^{-1}\left(x\right) \to p^{-1}\left(x'\right)
    \]
    given by
    \[
        y \mapsto \text{ the end-point of the unique lift } \tilde{\gamma} \text{ of } \gamma \text{ starting at } y
    \]
    where $\tilde{\gamma}$ is the unique path in $Y$ that starts at $y$ and satisfies $p \circ \tilde{\gamma} = \gamma$ (cf.~\cref{lem:path-lifting-property}).

    If $p$ is a covering map, by \cref{lem:homotopy-lifting-property}, the function $\mathsf{unw}_{\gamma} \from p^{-1}\left(x\right) \to p^{-1}\left(x'\right)$ only depends on the path-homotopy class of $\gamma$ and so, we may denote it by $\mathsf{unw}_{\left[\gamma\right]} \from p^{-1}\left(x\right) \to p^{-1}\left(x'\right)$.
    
    \item Given any covering map $p \from Y \to X$, let
    \[
        \mathsf{Fib}_{X}\left(p\right)  \from \Pi_{1}{X} \to \Set
    \]
    be the functor that maps a vertex $x \in X$ to the fiber $p^{-1}\left(x\right)$ over $x$, and a morphism $\left[\gamma\right] \from x \to x'$ in $X$ to the set-function $\mathsf{unw}_{\left[\gamma\right]} \from p^{-1}\left(x\right) \to p^{-1}\left(x'\right)$.

    \item Given any graph $X$, let
    \[
        \mathsf{Fib}_{X} \from \Cov\left(X\right) \to \Set^{\Pi_{1}{X}}
    \]
    be the functor that maps an object $p \in \Cov\left(X\right)$ to the functor $\mathsf{Fib}_{X}\left(p\right) \from \Pi_{1}X \to \Set$, and a morphism $f \from p \to p'$ in $\Cov\left(X\right)$ to the induced natural transformation $\mathsf{Fib}_{X}\left(f\right) \from \mathsf{Fib}_{X}\left(p\right) \Rightarrow \mathsf{Fib}_{X}\left(p'\right)$ whose components are given by the restrictions $f|_{p^{-1}\left(x\right)} \from p^{-1}\left(x\right) \to \left(p'\right)^{-1}\left(x\right)$ for $x \in X$.

    \item Given any morphism $g \from X' \to X$ in $\Graph$, the following naturality square commutes up to a natural isomorphism of functors:
    \[
    \begin{tikzcd}
        \Cov\left(X\right) \arrow[r,"\mathsf{Fib}_{X}"] \arrow[d,"g^{*}"'] & \Set^{\Pi_{1}X} \arrow[d,"\left(-\right) \circ g_{*}"] \\
        \Cov\left(X'\right) \arrow[r,"\mathsf{Fib}_{X'}"] & \Set^{\Pi_{1}X'} 
    \end{tikzcd}
    \]
    Let $\mathsf{Fib} \from \Cov \Rightarrow \Set^{\Pi_{1}\left(-\right)}$ be the pseudo-natural transformation whose components are given by $\mathsf{Fib}_{X} \from \Cov\left(X\right) \to \Set^{\Pi_{1}X}$.
\end{enumerate}
\end{definition}

The next definition mimics the standard topological construction of a covering space associated to a functor, cf.~\cite[pp.~63--64]{hatcher}.

\begin{definition}\label{def:total-graph}
Given any graph $X$, the \emph{total graph} $\mathsf{Tot}_{X}F$ of a functor $F \from \Pi_{1}X \to \Set$ is defined as follows:
\begin{align*}
    \V{\left(\mathsf{Tot}_{X}F\right)} &= \coprod_{x \in X}{Fx} \\
    \E{\left(\mathsf{Tot}_{X}F\right)} &=
    \left\{
        y \sim y'
    \ \middle\vert \   
        \begin{aligned}
        x  \sim x' \text{ is an edge in X,} & \qquad \text{where } y \in Fx \text{ and } y' \in Fx' \text{, and} \\
        \left(F\left[e\right]\right)\left(y\right) = y', &\qquad \text{where } e \from I_{1} \to X \text{ is given by } e\left(0\right) = x \text{ and } e\left(1\right) = x' 
        \end{aligned}
    \right\}
\end{align*}
It comes equipped with a graph map $p \from \mathsf{Tot}_{X}F \to X$ that maps $y \in \mathsf{Tot}_{X}F$ to $x \in X$ if $y \in Fx$.
\end{definition}

\begin{proposition}
    Given any graph $X$ and functor $F \from \Pi_{1}X \to \Set$, the graph map $p \from \mathsf{Tot}_{X}F \to X$ is a covering map.
\end{proposition}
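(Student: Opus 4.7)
The plan is to verify condition (4) of \cref{lem:equiv-defs-covering}, i.e., that $p$ has the right lifting property against $0 \from I_{0} \to I_{1}$, $\gbox_{1,0} \from I_{2} \to I_{1}$, and $\gbox_{1,1} \from I_{3} \to I_{1} \gtimes I_{1}$. The first lifting problem is immediate: given $y \in Fx$ and an edge $e \from I_{1} \to X$ starting at $x$, define the lift by $\tilde{e}(0) = y$ and $\tilde{e}(1) = F[e](y)$ (and take the constant lift when $e$ is constant). For the $\gbox_{1,0}$ problem, a lift exists exactly when $u(0) = u(2)$; since $p \circ u = v \circ \gbox_{1,0}$, both $u(0)$ and $u(2)$ lie in $F(v(1))$, and the two edges $0 \sim 1$ and $1 \sim 2$ of $I_{2}$ yield $u(0) = F[e](u(1)) = u(2)$ in the non-degenerate case $v(0) \neq v(1)$, and $u(0) = u(1) = u(2)$ when $v$ is constant.

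The substantive step is lifting against $\gbox_{1,1}$. Given $u \from I_{3} \to \mathsf{Tot}_{X} F$ and $v \from I_{1} \gtimes I_{1} \to X$ with $p \circ u = v \circ \gbox_{1,1}$, the candidate lift $h$ is forced on vertices by $h \circ \gbox_{1,1} = u$, and three of the four edges of $I_{1} \gtimes I_{1}$ are automatically handled by the image of $u$. Only the remaining edge $(1,0) \sim (1,1)$ requires verification: one must show that $u(0) \sim u(3)$ is an edge in $\mathsf{Tot}_{X} F$, or that $u(0) = u(3)$ in case $v(1,0) = v(1,1)$.

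The heart of the argument is that in $\Pi_{1}(I_{1} \gtimes I_{1})$ the direct path $(1,0) \to (1,1)$ represents the same morphism as the long path $(1,0) \to (0,0) \to (0,1) \to (1,1)$; this follows from \cref{lem:paths-in-Im-times-In}, or equivalently from the monoidality of $\Pi_{1}$ in \cref{thm:pi1_monoidal} applied to the contractible $I_{1}$. Applying $F$ to this equality in $\Pi_1 X$ (via $v$) and chasing $u(0) \in F(v(1,0))$ around the long path produces $u(3)$ at the far end, since each edge of the long path in $X$ has its $F$-image identified with the corresponding $u(i)$ by the very definition of edges in $\mathsf{Tot}_{X} F$. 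Comparing with the direct path and splitting on whether $v(1,0) = v(1,1)$ yields either $u(0) = u(3)$ or $F[e](u(0)) = u(3)$ for the edge $e$ from $v(1,0)$ to $v(1,1)$, which is precisely what is required.

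The main bookkeeping obstacle will be consistently orienting the undirected edges of $X$ when converting between morphisms in $\Pi_{1} X$ and the functional relations defining edges in $\mathsf{Tot}_{X} F$; this is handled using that $F$ sends inverse paths to inverse functions, since $\Pi_{1} X$ is a groupoid.
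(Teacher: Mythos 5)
Your proposal is correct and follows essentially the same route as the paper: the forced lift against $0 \from I_{0} \to I_{1}$, and for $\gbox_{1,1}$ the identification $[\,v \circ \gbox_{1,1}\,] = [e]$ in $\Pi_{1}X$ (via \cref{lem:paths-in-Im-times-In}) followed by chasing $u(0)$ along the long path using the definition of edges in $\mathsf{Tot}_{X}F$ to get $F[e](u(0)) = u(3)$. The only cosmetic difference is that you verify condition (4) of \cref{lem:equiv-defs-covering} (adding the easy $\gbox_{1,0}$ check, which you handle correctly since fibers of $\mathsf{Tot}_{X}F$ are discrete), whereas the paper uses condition (3).
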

\begin{proof}
    We need to verify that $p \from \mathsf{Tot}_{X}F \to X$ has the unique RLP against $0 \from I_{0} \to I_{1}$ and against $\gbox_{1,1} \from I_{3} \to I_{1} \gtimes I_{1}$.
    
    We first consider the following lifting problem against $0 \from I_{0} \to I_{1}$:
    \[
    \begin{tikzcd}
        I_{0} \arrow[r,"y"] \arrow[d,"0"'] &
        \mathsf{Tot}_{X}F \arrow[d,"p"] \\
        I_{1} \arrow[r,"e"] &
        X
    \end{tikzcd}
    \]
    where $e\left(0\right) = p\left(y\right) = x$ and $e\left(1\right) = x'$ for some $x, x' \in X$ such that either $x \sim x'$ is an edge in $X$ or $x = x'$.
    
    Note that any solution $\tilde{e} \from I_{1} \to Y$ to the above lifting problem must satisfy $\tilde{e}\left(0\right) = y$ and $\tilde{e}\left(1\right) = F\left(\left[e\right]\right)\left(y\right)$.
    Since these two conditions define a well-defined graph map, we have a unique solution to the above lifting problem.
    
    Next, we consider the lifting problem:
    \[
    \begin{tikzcd}
        I_{3} \arrow[r,"u"] \arrow[d,"\gbox_{1,1}"'] & \mathsf{Tot}_{X}F \arrow[d,"p"] \\
        I_{1} \gtimes I_{1} \arrow[r,"v"] & X
    \end{tikzcd}
    \]
    We can visualize the maps $u \from I_{3} \to \mathsf{Tot}_{X}F$ and $v \from I_{1} \gtimes I_{1} \to X$ as follows:
    \begin{figure}[H]
    \centering
    \begin{minipage}{0.45\textwidth}
    \centering
    \begin{tikzpicture}[colorstyle/.style={circle, draw=black, fill=black, thick, inner sep=0pt, minimum size=1 mm, outer sep=0pt}, scale=1]
        \node (1) at (0,1) [colorstyle, label = above left: {$u\left(1\right)$}] {};
        \node (2) at (1,1) [colorstyle, label = above right: {$u\left(0\right)$}] {};
        \node (3) at (0,0) [colorstyle, label = below left: {$u\left(2\right)$}] {};
        \node (4) at (1,0) [colorstyle, label = below right: {$u\left(3\right)$}] {};
        \draw [thick] (1) -- (2);
        \draw [thick] (1) -- (3);
        \draw [thick] (3) -- (4);
    \end{tikzpicture}
    \end{minipage}
    \begin{minipage}{0.45\textwidth}
    \centering
    \begin{tikzpicture}[colorstyle/.style={circle, draw=black, fill=black, thick, inner sep=0pt, minimum size=1 mm, outer sep=0pt}, scale=1]
        \node (1) at (0,1) [colorstyle, label = above left: {$v\left(0,0\right)$}] {};
        \node (2) at (1,1) [colorstyle, label = above right: {$v\left(1,0\right)$}] {};
        \node (3) at (0,0) [colorstyle, label = below left: {$v\left(0,1\right)$}] {};
        \node (4) at (1,0) [colorstyle, label = below right: {$v\left(1,1\right)$}] {};
        \draw [thick] (1) -- (2) ;
        \draw [thick] (1) -- (3);
        \draw [thick] (3) -- (4);
        \draw [thick] (2) -- (4);
    \end{tikzpicture}
    \end{minipage}
    \end{figure}
    This lifting problem admits a unique solution if and only if either $u\left(0\right) \sim u\left(3\right)$ is an edge in $\mathsf{Tot}_{X}F$ or $u\left(0\right) = u\left(3\right)$.
    
    Note that we either have an edge $v\left(1,0\right) \sim v\left(1,1\right)$ in $X$ or have $v\left(1,0\right) = v\left(1,1\right)$.
    In either case, it suffices to show that $F\left(\left[e\right]\right)\left(u\left(0\right)\right) = u\left(3\right)$ where $e \from I_{1} \to X$ is given by $e\left(0\right) = v\left(1,0\right)$ and $e\left(1\right) = v\left(1,1\right)$.
    
    Consider the path $\gamma \from I_{3} \to X$ given by the composite
    \[
    \gamma =
    \left(
    \begin{tikzcd}
        I_{3} \arrow[r,"\gbox_{1,1}"] & I_{1} \gtimes I_{1} \arrow[r,"v"] & X
    \end{tikzcd}
    \right)
    \]
    Note that $\left[\gamma\right] = \left[e\right]$.
    
    Thus, we have:
    \begin{align*}
        F\left[e\right]\left(u\left(0\right)\right) &= \left(F\left[\gamma\right]\right)\left(u\left(0\right)\right) \\
        &= F\left[e_{3}^{*}\left(\gamma\right)\right] \circ F\left[e_{2}^{*}\left(\gamma\right)\right] \circ F\left[e_{1}^{*}\left(\gamma\right)\right] \left(u\left(0\right)\right) \\
        &= F\left[e_{3}^{*}\left(\gamma\right)\right] \circ F\left[e_{2}^{*}\left(\gamma\right)\right] \left(u\left(1\right)\right) \\
        &= F\left[e_{3}^{*}\left(\gamma\right)\right] \left(u\left(2\right)\right) \\
        &= u\left(3\right)
    \end{align*}
    as required.
\end{proof}

\begin{definition}
\begin{enumerate}
    \item Given any graph $X$, let
    \[
        \mathsf{Tot}_{X} \from \Set^{\Pi_{1}X} \to \mathsf{Cov}\left(X\right)
    \]
    be the functor that maps a functor $F \from \Pi_{1}X \to \Set$ to the covering map $\mathsf{Tot}_{X}F \to X$, and a natural transformation $\alpha \from F \Rightarrow G$ between two functors $F, G \from \Pi_{1}X \to \Set$ to the induced morphism $\mathsf{Tot}_{X}\left(\eta\right) \from \mathsf{Tot}_{X}\left(F\right) \to \mathsf{Tot}_{X}\left(G\right)$ of coverings over $X$ given by the disjoint union $\coprod_{x \in X}{\alpha_{x}}$.

    \item Given any morphism $g \from X' \to X$ in $\Graph$, the following naturality square commutes up to a natural isomorphism:
    \[
    \begin{tikzcd}
        \Set^{\Pi_{1}{X}} \arrow[r,"\mathsf{Tot}_{X}"] \arrow[d,"\left(-\right) \circ g_{*}"'] & \mathsf{Cov}\left(X\right) \arrow[d,"g^{*}"] \\
        \Set^{\Pi_{1}{X'}} \arrow[r,"\mathsf{Tot}_{X'}"] & \mathsf{Cov}\left(X'\right)
    \end{tikzcd}
    \]
    Let $\mathsf{Tot} \from \Set^{\Pi_{1}{\left(-\right)}} \Rightarrow \mathsf{Cov}$ be the pseudo-natural transformation whose components are given by $\mathsf{Tot}_{X} \from \Set^{\Pi_{1}X} \to \mathsf{Cov}\left(X\right)$.
\end{enumerate}
\end{definition}

\begin{theorem} \label{thm:covers-as-functors}
    For each graph $X$, the functors
    \[
        \mathsf{Fib}_{X} \from \mathsf{Cov}\left(X\right) \to \Set^{\Pi_{1}{X}}
    \]
    and
    \[
        \mathsf{Tot}_{X} \from \Set^{\Pi_{1}{X}} \to \mathsf{Cov}\left(X\right)
    \]
    define an equivalence of categories.
\end{theorem}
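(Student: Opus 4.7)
I will construct natural isomorphisms in both directions, each of which is the identity on vertex sets; the substance lies in verifying that these identifications preserve morphisms on the functor side and edges on the covering side. The two directions rely on different inputs from the theory developed in this section, and I expect the edge comparison for $\mathsf{Tot}_X \circ \mathsf{Fib}_X \cong \mathrm{id}_{\mathsf{Cov}(X)}$ to be the main obstacle, since this is where the full strength of the local isomorphism condition is used.

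For $\mathsf{Fib}_X \circ \mathsf{Tot}_X \cong \mathrm{id}_{\Set^{\Pi_1 X}}$, fix $F \from \Pi_1 X \to \Set$ and let $p \from \mathsf{Tot}_X F \to X$ be its projection, so that $p^{-1}(x) = Fx$ on objects. For a morphism $[\gamma] \from x \to x'$ in $\Pi_1 X$, both $\mathsf{unw}_{[\gamma]}$ and $F[\gamma]$ are functorial under concatenation of paths, so it suffices to check the agreement $\mathsf{unw}_{[e]} = F[e]$ for a single edge $e \from I_1 \to X$. But by the construction of the edge set of $\mathsf{Tot}_X F$, the unique lift of $e$ starting at $y \in Fx$ terminates precisely at $F[e](y)$, giving the required equality. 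Naturality in $F$ is then immediate from the definitions.

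For $\mathsf{Tot}_X \circ \mathsf{Fib}_X \cong \mathrm{id}_{\mathsf{Cov}(X)}$, given a covering $p \from Y \to X$ I define $\Phi \from \mathsf{Tot}_X(\mathsf{Fib}_X p) \to Y$ as the identity on vertices, using $\V{Y} = \coprod_{x \in X} p^{-1}(x)$. By construction, every edge of $\mathsf{Tot}_X(\mathsf{Fib}_X p)$ arises as the unique lift of an edge of $X$, and is thus automatically an edge of $Y$. The converse is where the local isomorphism hypothesis is essential: given an edge $y \sim y'$ in $Y$, injectivity of $p|_{N_y}$ (\cref{RLP-nbhd}) rules out $p(y) = p(y')$, so $p(y) \sim p(y')$ is an edge $e$ in $X$; uniqueness of path lifts (\cref{lem:path-lifting-property}) then identifies $y \sim y'$ with the lift of $e$ starting at $y$, which is precisely the condition for $y \sim y'$ to be an edge of $\mathsf{Tot}_X(\mathsf{Fib}_X p)$. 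Naturality in morphisms of $\mathsf{Cov}(X)$ is then automatic, since any such morphism is determined by its action on vertex sets.
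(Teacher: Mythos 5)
Your proposal is correct, and it is exactly the argument the paper has in mind: the paper's proof reads ``Straightforward after unfolding the definitions,'' and your writeup supplies precisely that unfolding --- both composites are compared to the identity via identity-on-vertices maps, with the unit checked on single edges using that the unique lift of an edge starting at $y \in Fx$ ends at $F[e](y)$, and the counit's edge comparison handled by injectivity of $p|_{N_y}$ together with uniqueness of lifts. The only implicit step worth flagging is the reduction to length-$1$ edges in the first direction, which uses that $\mathsf{unw}$ is compatible with concatenation (again by uniqueness of lifts); this is already needed for $\mathsf{Fib}_{X}(p)$ to be a functor, so it is harmless.
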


\begin{proof}
    Straightforward after unfolding the definitions.
\end{proof}

In case when $(X, x_0)$ is a pointed connected graph, by \cref{A1-v-Pi1-connected}, we may replace the functor category $\Set^{\Pi_{1}{X}}$ by a smaller model $\Set^{A_{1}(X,x_0)}$, i.e., the category of $A_1(X, x_0)$-sets.
  
\newcommand{\Fib}[1]{\mathsf{Fib}_{#1}}
\newcommand{\Tot}[1]{\mathsf{Tot}_{#1}}

\begin{corollary} \label{cor:equiv-A1-set-cov}
    For a pointed connected graph $(X, x_0)$, we have an equivalence of categories
    \[ \Cov(X) \equiv \Set^{A_{1}(X, x_0)} \]
    given by postcomposing the functors $\Fib{X}$ and $\Tot{X}$ with the equivalence of \cref{A1-v-Pi1-connected}. \qed
\end{corollary}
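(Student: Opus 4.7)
The plan is to obtain the desired equivalence by composing two equivalences already available in the paper. By \cref{A1-v-Pi1-connected}, for a connected graph $X$, the inclusion of the pointed vertex $\{x_0\} \hookrightarrow X$ induces an equivalence of groupoids
\[
    \iota \from A_{1}(X, x_0) \to \Pi_{1}X,
\]
where $A_{1}(X, x_0)$ is regarded as a one-object groupoid. By \cref{thm:covers-as-functors}, we also have an equivalence of categories
\[
    \Fib{X} \from \Cov(X) \to \Set^{\Pi_{1}X}
\]
with quasi-inverse $\Tot{X}$.

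First, I would invoke the standard fact that any equivalence of (small) categories $F \from \cat{C} \to \cat{D}$ induces an equivalence of functor categories $F^{*} \from \Set^{\cat{D}} \to \Set^{\cat{C}}$ by precomposition, with quasi-inverse given by left (or right) Kan extension along $F$; this is purely formal from the 2-functoriality of $\Set^{(-)}$. Applying this to $\iota$ yields an equivalence
\[
    \iota^{*} \from \Set^{\Pi_{1}X} \to \Set^{A_{1}(X, x_0)}.
\]

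Then I would define the desired equivalence as the composite
\[
    \Cov(X) \xrightarrow{\Fib{X}} \Set^{\Pi_{1}X} \xrightarrow{\iota^{*}} \Set^{A_{1}(X, x_0)},
\]
with quasi-inverse obtained by composing $\Tot{X}$ with a quasi-inverse of $\iota^{*}$. Since both factors are equivalences of categories and the composite of equivalences is an equivalence, the result follows. There is no real obstacle here: the only minor bookkeeping point is to verify that $\iota^{*} \circ \Fib{X}$ sends a covering $p \from Y \to X$ to the fiber $p^{-1}(x_{0})$ equipped with its natural $A_{1}(X, x_0)$-action by path-lifting, which is immediate from the definitions of $\Fib{X}$ and $\iota$.
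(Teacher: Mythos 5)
Your proposal is correct and follows exactly the argument the paper intends (and leaves implicit): compose the equivalence $\Fib{X} \from \Cov(X) \to \Set^{\Pi_{1}X}$ of \cref{thm:covers-as-functors} with the equivalence of functor categories induced by precomposition along the groupoid equivalence $A_{1}(X,x_{0}) \simeq \Pi_{1}X$ of \cref{A1-v-Pi1-connected}. The only detail you add beyond the paper's one-line justification is the explicit identification of the composite with the fiber $p^{-1}(x_{0})$ carrying its path-lifting action, which is a correct and harmless elaboration.
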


This allows us to recover the usual Galois correspondence between subgroups of the fundamental group of a pointed connected space and the connected coverings thereof.

\begin{corollary} \label{cor:Galois-correspondence}
    For a pointed connected graph $(X, x_0)$, the functors $\Fib{X}$ and $\Tot{X}$ restrict to give an equivalence of categories between the opposite of the poset of subgroups of $A_1(X, x_0)$ and the category of connected covering graphs of $(X, x_0)$, i.e.,
     \[\left\{ \begin{array}{c} \textnormal{subgroups of } A_1(X, x_0) \\ \textnormal{ordered by } \subseteq \end{array} \right\}^\op 
     \equiv 
     \left\{ \begin{array}{c} \textnormal{connected coverings } \\ \textnormal{of } (X, x_0)  \end{array} \right\}\text{.}\]
     A connected cover $p \from (Y,y_{0}) \to (X,x_{0})$ corresponds to the subgroup $p_{*}(A_{1}(Y,y_{0})) \leq A_{1}(X,x_{0})$.
     Moreover, the subgroup $p_{*}(A_{1}(Y,y_{0}))$ is normal exactly when the action of $A_{1}(X,x_{0})$ on $p^{-1} (x_0)$ is transitive.
\end{corollary}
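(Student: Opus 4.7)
The plan is to apply \cref{cor:equiv-A1-set-cov} to pass from $\Cov(X)$ to $\Set^G$ where $G = A_1(X, x_0)$, thereby translating the claim into a statement about transitive $G$-sets, and then invoke the classical correspondence between transitive $G$-sets and subgroups of $G$.

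First, I would characterize connected covers under the equivalence: a covering $p \from Y \to X$ is connected if and only if the associated $G$-set $\Fib{X}(p)(x_0) = p^{-1}(x_0)$ is transitive. The forward direction follows from path lifting: given $y, y' \in p^{-1}(x_0)$ and a path $\delta \from y \rightsquigarrow y'$ in $Y$ (which exists by connectedness), the composite $\gamma = p \circ \delta$ is a loop at $x_0$ with $\mathsf{unw}_{[\gamma]}(y) = y'$, witnessing transitivity. For the converse, given any vertex $y \in Y$, I would connect it to the fiber $p^{-1}(x_0)$ by choosing a path in $X$ from $p(y)$ to $x_0$ (using connectedness of $X$), lifting it via \cref{lem:path-lifting-property}, and then invoking transitivity of the $G$-action on the fiber to conclude $Y$ is path-connected.

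Second, I would invoke the classical correspondence between pointed transitive $G$-sets and subgroups of $G$: the assignment $H \mapsto (G/H, eH)$ has inverse $(S, s_0) \mapsto \mathrm{Stab}(s_0)$, and a pointed $G$-equivariant map $(G/H, eH) \to (G/K, eK)$ exists (and is then unique) if and only if $H \subseteq K$. Composed with $\Fib{X}$ and $\Tot{X}$, this produces the asserted equivalence, with the orientation of morphisms accounting for the \emph{opposite} in the statement.

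The main subtlety is the role of basepoints. Transitive $G$-sets \emph{up to unpointed isomorphism} correspond to conjugacy classes of subgroups, not to subgroups themselves; to recover the full subgroup poset one must work with pointed transitive $G$-sets, equivalently with covers equipped with a chosen lift of $x_0$. This pointed structure is implicit in the setup: for each connected cover $(Y, p)$ we select a preimage $y_0 \in p^{-1}(x_0)$ and send it to $\mathrm{Stab}(y_0) \leq G$; conversely, $H \leq G$ yields $\Tot{X}(G/H)$ with distinguished basepoint $eH$. Verifying that this extension of $\Fib{X}$ and $\Tot{X}$ respects morphisms (which are automatically pointed, since a morphism over $X$ of connected covers is determined by its value at the basepoint, once chosen compatibly) is the main bookkeeping step, after which the Galois correspondence assembles directly from the two equivalences above.
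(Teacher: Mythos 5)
Your overall route is the same as the paper's: restrict the equivalence of \cref{cor:equiv-A1-set-cov} to connected coverings and transitive $A_1(X,x_0)$-sets, then classify transitive $G$-sets by subgroups. The differences are in the details: you prove the ``connected cover $\Leftrightarrow$ transitive fiber'' characterization by hand via path lifting (\cref{lem:path-lifting-property}) and unwinding, whereas the paper simply restricts to connected objects in both categories, characterized categorically; and you explicitly flag the pointed-versus-unpointed issue (unpointed transitive $G$-sets only remember conjugacy classes of subgroups), which the paper's one-line proof passes over. That extra care is a genuine improvement in precision.

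The one place where your argument is not yet an argument is the final variance step. By the fact you yourself cite, a pointed equivariant map $(G/H, eH) \to (G/K, eK)$ exists iff $H \subseteq K$, so the category of pointed transitive $G$-sets is the subgroup poset taken \emph{covariantly}; the sentence ``with the orientation of morphisms accounting for the opposite'' therefore does not follow from what precedes it. If instead you keep the unpointed morphisms of $\Cov(X)$ (which is what $\Fib{X}$ and $\Tot{X}$ literally act on), then connected covers form the orbit category of $G$ rather than a poset at all: for instance the universal cover acquires automorphism group $G$, and maps $G/H \to G/K$ correspond to cosets $gK$ with $g^{-1}Hg \subseteq K$. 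So you must commit to pointed coverings with pointed morphisms (as your bookkeeping paragraph suggests) and then track the direction of the correspondence explicitly, rather than asserting that the ``op'' falls out. To be fair, the paper's own proof is equally terse at exactly this point, asserting that transitive $G$-sets are ``equivalent to quotients of $G$ by its subgroups, and hence to the opposite of the poset of subgroups''; your proposal at least isolates where the real content of that assertion lies.
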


\begin{proof}
    By restricting the equivalence of \cref{cor:equiv-A1-set-cov} to connected objects in both categories (i.e., objects $X$ such $\mathrm{Hom}(X, -)$ preserves coproducts), we obtain an equivalence between the categories of transitive $A_1(X,x_0)$-sets and of connected covering graphs of $(X,x_0)$.
    The former is however equivalent to that of quotients of $A_1(X,x_0)$ by its subgroups, and hence equivalent to the opposite of the poset of subgroups of $A_1(X, x_0)$.

    The verification that under this correspondence normal subgroups correspond to connected coverings with a transitive action of the fundamental group follows verbatim the proof given in the classical setting, e.g., \cite[Prop.~1.39.(a)]{hatcher}.
\end{proof}

An analogue of a slightly weaker version of this result was proven in \cite[\S3]{chih-scull:covers}.

\subsection{Universal covers}

We next turn our attention to the construction of the universal cover of a pointed graph $(X, x_0)$, which is the initial object in the category of pointed covering graphs of $(X,x_0)$.
To define it, we need to upgrade the definition of coverings and their maps (\cref{def:covering,def:map-coverings}) to the pointed setting.

\begin{definition} \leavevmode
\begin{enumerate}
    \item Let $\left(X,x_{0}\right)$ be a pointed graph.
    A pointed graph map $p \from \left(Y,y_{0}\right) \to \left(X,x_{0}\right)$ where the underlying map $p \from Y \to X$ is a covering is called a \emph{pointed covering}.

    \item Given two pointed coverings $p \from \left(Y,y_{0}\right) \to \left(X,x_{0}\right)$ and $p' \from \left(Y',y_{0}'\right) \to \left(X,x_{0}\right)$ over the same pointed base $\left(X,x_{0}\right)$, a \emph{morphism $f \from p \to p'$ of pointed coverings over $\left(X,x_{0}\right)$} is a pointed map $f \from \left(Y,y_{0}\right) \to \left(Y',y_{0}'\right)$ such that the underlying map $f \from Y \to Y'$ is a morphism of coverings over $X$.

    \item For any pointed graph $\left(X,x_{0}\right)$, pointed coverings over $X$ and morphisms between them form a category, denoted $\mathsf{Cov}\left(X,x_{0}\right)$.
    It is the full subcategory of $\Graph_{*} \slice X$ on pointed covering maps.

    \item An initial object in the category $\mathsf{Cov}\left(X,x_{0}\right)$ is called a \emph{universal cover}.
\end{enumerate}
\end{definition}

Recall that a graph $X$ is \emph{simply connected} if it is path-connected and if the fundamental group $A_{1}\left(X,x_{0}\right)$ is trivial for any vertex $x_{0} \in X$.

\begin{theorem} \leavevmode \label{univ-cov-exists}
\begin{enumerate}
    \item Every pointed graph $\left(X,x_{0}\right)$ admits a universal cover.
    \item A pointed covering $p \from \left(Y,y_{0}\right) \to \left(X,x_{0}\right)$ is universal if and only if $Y$ is simply connected.
\end{enumerate}
\end{theorem}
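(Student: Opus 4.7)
The plan is to assemble the theorem directly from the three preceding results. Since the paper has already constructed the graph $\tilde{X}_{x_{0}}$ and shown both that the canonical map $p \from (\tilde{X}_{x_{0}},[c_{x_{0}}]) \to (X, x_{0})$ is a pointed covering and that $\tilde{X}_{x_{0}}$ is simply connected, most of the work is done. What remains is to package these into the two claims.

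For part (1), I would simply invoke the construction of $\tilde{X}_{x_{0}}$: the preceding proposition establishes that the map $p \from (\tilde{X}_{x_{0}},[c_{x_{0}}]) \to (X,x_{0})$ is an initial object in $\mathsf{Cov}(X,x_{0})$, i.e., a universal cover. Thus every pointed graph admits a universal cover.

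For part (2), the direction $(\Leftarrow)$ is exactly the proposition stating that a pointed covering $p \from (Y, y_{0}) \to (X, x_{0})$ with $Y$ simply connected is universal. For $(\Rightarrow)$, suppose $p \from (Y, y_{0}) \to (X, x_{0})$ is a universal cover. Since $\tilde{X}_{x_{0}} \to (X,x_{0})$ is also a universal cover by part (1), initial objects are unique up to unique isomorphism, so there is an isomorphism $(Y, y_{0}) \iso (\tilde{X}_{x_{0}}, [c_{x_{0}}])$ in $\mathsf{Cov}(X, x_{0})$. In particular, the underlying graph $Y$ is isomorphic to $\tilde{X}_{x_{0}}$, which was proven to be simply connected. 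Hence $Y$ is simply connected.

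The only step that carries any subtlety is making sure that ``initial object unique up to unique isomorphism'' is applied correctly in the pointed covering category, but this is a purely categorical fact and presents no real obstacle. Everything else is bookkeeping over the previous two propositions, so the proof will be essentially a single paragraph citing the relevant earlier results.
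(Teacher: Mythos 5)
Your proposal is correct and matches the paper's own argument: the paper likewise deduces part (1) from the construction of $\tilde{X}_{x_{0}}$ and its universality, and proves the forward direction of part (2) by noting that any initial object of $\mathsf{Cov}\left(X,x_{0}\right)$ must be isomorphic to $\tilde{X}_{x_{0}}$, which is simply connected. No gaps; the appeal to uniqueness of initial objects up to isomorphism is exactly the step the paper uses.
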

\begin{proof}
    We can suppose the base graph $X$ is path-connected without any loss in generality.
    Indeed, if $X$ is not path-connected, then for pointed cover $p \from (Y,y_{0}) \to (X,x_{0})$ to be initial in the category $\mathsf{Cov}\left(X,x_{0}\right)$, the fiber $p^{-1}(x)$ over any vertex $x$ not in the path-component of $x_{0}$ must be empty.
    Thus any statement about a universal cover over $(X,x_{0})$ reduces to a statement about a universal cover over the path-component of $x_{0}$ in $X$.
    
    Assuming $X$ is path-connected, for any local isomorphism $p \from Y \to X$, the graph $Y$ must also be path-connected.
    Thus, the category $\mathsf{Cov}\left(X,x_{0}\right)$ coincides with the category of pointed, connected coverings of $(X,x_{0})$.
    By the Galois correspondence of \cref{cor:Galois-correspondence}, this category is equivalent to the poset of subgroups of $A_{1}(X,x_{0})$ ordered by reverse inclusion.
    This poset has an initial object, namely the trivial subgroup $\left\{[c_{x_{0}}]\right\}$.
    It follows that the category $\mathsf{Cov}\left(X,x_{0}\right)$ also has an initial object.
    Furthermore, a pointed covering $p \from (Y,y_{0}) \to (X,x_{0})$ is universal if and only if the subgroup $p_{*}(A_{1}(Y,y_{0}))$ is trivial.
    Since $p$ is a covering map, the group homomorphism $p_{*} \from A_{1}(Y,y_{0}) \to A_{1}(X,x_{0})$ is injective.
    Thus, a pointed covering $p \from (Y,y_{0}) \to (X,x_{0})$ is universal if and only $A_{1}(Y,y_{0})$ is trivial, i.e. if and only if $Y$ is simply connected.
\end{proof}

\begin{example}\leavevmode
\begin{enumerate}
    \item Every simply connected graph is its own universal cover via the identity map.
    Thus, the cycle graphs $C_{3}$ and $C_{4}$, which are in fact contractible, are their own universal covers.
    Other examples of simply connected graphs include trees (i.e. connected graphs without cycles) and connected, chordal graphs (i.e. connected graphs in which every cycle having four or more vertices has a \emph{chord} -- an edge that is not part of the cycle, but connects two vertices of the cycle).
    \item Recall that the map $p \from I_{\infty} \to C_{n}; i \mapsto i\ (\text{mod } n)$ is a covering map for $n \geq 5$.
    Since $I_{\infty}$ is simply connected, this map is the universal cover of $C_{n}$ for $n \geq 5$.
    For $n = 3, 4$, this map is not a covering map, so there is no question of it being a universal cover.
    We note here that other treatments of this subject, which define universal covers as local isomorphisms with simply connected domains, suffer from the issue of non-uniqueness: for example, both the identity map on $C_{n}$ as well as the map $p \from I_{\infty} \to C_{n}; i \mapsto i\ (\text{mod } n)$ are local isomorphisms with simply connected domains, for $n = 3$ and $4$.
\end{enumerate}
\end{example}

We now proceed to give an explicit description of the universal cover over any pointed graph $(X,x_{0})$, essentially following the construction of \cref{def:total-graph} for the representable functor $\Pi_{1}X(x_{0},-) \from \Pi_{1}X \to \Set$, which corresponds to the action of $A_{1}(X,x_{0})$ on itself by left multiplication.

\begin{definition}
    Given any pointed graph $\left(X,x_{0}\right)$, we construct the graph $\tilde{X}_{x_{0}}$ as follows:
    \begin{align*}
    \V{\left(\tilde{X}_{x_{0}}\right)} &=
    \left\{
    \text{path-homotopy class } \left[\gamma\right]
    \ \middle\vert \
    \gamma \text{ is a path in } X \text{ starting at } x_{0} \right\} \\
    \E{\left(\tilde{X}_{x_{0}}\right)} &=
    \left\{
    \left[\gamma\right] \sim \left[\gamma \ast e\right]
    \ \middle\vert \
    \begin{aligned}
        & \gamma \text{ is a path in } X \text{ starting at } x_{0} \text{, and} \\
        & e \text{ is a path of length 1 in } X, \\
        & \text{starting at the end-point of } \gamma
    \end{aligned}
    \right\}
    \end{align*}
    It has a distinguished vertex given by the path-homotopy class of the constant path at $x_{0}$.
    It also comes equipped with a pointed graph map $p \from \left(\tilde{X}_{x_{0}},\left[\mathrm{c}_{x_{0}}\right]\right) \to \left(X,x_{0}\right)$ that maps $\left[\gamma\right] \in \tilde{X}_{x_{0}}$ to the end-point of $\gamma$ in $X$.
\end{definition}

\begin{proposition}
    Given any pointed graph $\left(X,x_{0}\right)$, the map $p \from \left(\tilde{X}_{x_{0}},\left[\mathrm{c}_{x_{0}}\right]\right) \to \left(X,x_{0}\right)$ is a universal cover.
    \qed
\end{proposition}

\begin{remark}
    Observe that the fiber of $p \from \tilde{X}_{x_{0}} \to X$ over the base point $x_{0} \in X$ is precisely the fundamental group $A_{1}\left(X,x_{0}\right)$.
\end{remark}

\begin{definition}
    Given a pointed graph $\left(X,x_{0}\right)$, let $\mathsf{Fib}_{x_{0}} \from \mathsf{Cov}\left(X\right) \to \Set$ denote the functor that maps a covering map $p \from Y \to X$ to the fiber $p^{-1}\left(x_{0}\right)$ of $p$ over $x_{0}$ and a morphism of coverings $f \from p \to p'$ over $X$ to the restriction $f|_{p^{-1}\left(x_{0}\right)} \from p^{-1}\left(x_{0}\right) \to \left(p'\right)^{-1}\left(x_{0}\right)$.
\end{definition}

\begin{proposition}
Let $\left(X,x_{0}\right)$ be a pointed graph.
\begin{enumerate}
    \item The functor $\mathsf{Fib}_{x_{0}}$ is representable, and it is represented by the universal cover $\tilde{X}_{x_{0}} \to X$. In particular, given any cover $Y \to X$ over $X$, the hom-set $\mathsf{Cov}\left(X\right)\left(\tilde{X}_{x_{0}},Y\right)$ is in bijection with the fiber $\mathsf{Fib}_{x_{0}}\left(Y\right)$ of $Y$ over the base-point $x_{0}$.
    
    \item The fundamental group $A_{1}\left(X,x_{0}\right)$ is isomorphic to the automorphism group $\mathsf{Aut}_{\Set^{\mathsf{Cov}\left(X\right)}}\left(\mathsf{Fib}_{x_{0}}\right)$ of the functor $\mathsf{Fib}_{x_{0}} \from \mathsf{Cov}\left(X\right) \to \Set$ in the functor category $\Set^{\mathsf{Cov}\left(X\right)}$.
    \end{enumerate}
\end{proposition}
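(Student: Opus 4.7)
The plan is to deduce both parts from the universal property of $\tilde{X}_{x_0}$ as the initial object in $\mathsf{Cov}(X,x_0)$ and a standard application of Yoneda's lemma.

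For part (1), I would define the candidate bijection
\[
    \mathsf{Cov}\left(X\right)\left(\tilde{X}_{x_{0}},Y\right) \longrightarrow \mathsf{Fib}_{x_0}(Y), \qquad f \mapsto f\left(\left[c_{x_0}\right]\right),
\]
natural in $Y \in \mathsf{Cov}(X)$. Surjectivity follows by noting that each $y_0 \in p^{-1}(x_0)$ turns $p \from Y \to X$ into a pointed covering $\left(Y, y_0\right) \to (X, x_0)$; by the initiality of $\tilde{X}_{x_0}$ in $\mathsf{Cov}(X, x_0)$ (established in \cref{univ-cov-exists}), there is a unique pointed morphism $\left(\tilde{X}_{x_0}, [c_{x_0}]\right) \to (Y, y_0)$, and forgetting the pointing yields a morphism in $\mathsf{Cov}(X)$ whose value at $[c_{x_0}]$ is $y_0$. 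Injectivity is analogous: two morphisms $f_1, f_2 \from \tilde{X}_{x_0} \to Y$ in $\mathsf{Cov}(X)$ agreeing on $[c_{x_0}]$ yield two pointed morphisms to $(Y, f_1([c_{x_0}]))$, hence coincide by initiality.

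For part (2), I would combine part (1) with the Yoneda lemma. Since $\mathsf{Fib}_{x_0}$ is naturally isomorphic to the representable $\mathsf{Cov}(X)(\tilde{X}_{x_0}, -)$, we have
\[
    \mathsf{Aut}_{\Set^{\mathsf{Cov}(X)}}(\mathsf{Fib}_{x_0}) \cong \mathsf{Aut}_{\mathsf{Cov}(X)}\left(\tilde{X}_{x_0}\right)^{\op}.
\]
Taking $Y = \tilde{X}_{x_0}$ in part (1) identifies the endomorphism monoid of $\tilde{X}_{x_0}$ in $\mathsf{Cov}(X)$ with the fiber over $x_0$, which is $A_1(X, x_0)$ by the remark preceding the proposition. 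Explicitly, the endomorphism $f_{[\gamma]}$ corresponding to $[\gamma] \in A_1(X,x_0)$ is forced by the path lifting property to act by $f_{[\gamma]}([\sigma]) = [\gamma \ast \sigma]$, so $f_{[\gamma]} \circ f_{[\sigma]} = f_{[\gamma \ast \sigma]}$. In particular, each endomorphism has an inverse $f_{[\bar{\gamma}]}$, so $\mathsf{End}_{\mathsf{Cov}(X)}(\tilde{X}_{x_0}) = \mathsf{Aut}_{\mathsf{Cov}(X)}(\tilde{X}_{x_0})$.

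The remaining bookkeeping is to track that the bijection $A_1(X, x_0) \to \mathsf{Aut}_{\mathsf{Cov}(X)}(\tilde{X}_{x_0})$ given by $[\gamma] \mapsto f_{[\gamma]}$ is an anti-isomorphism of groups (since the group law of $A_1(X,x_0)$, inherited from composition in $\Pi_1 X$, is given by $[\gamma] \cdot [\sigma] = [\sigma \ast \gamma]$, whereas our formula above gives $f_{[\gamma]} \circ f_{[\sigma]} = f_{[\gamma \ast \sigma]}$). Composing this anti-isomorphism with the op from the Yoneda lemma yields the desired group isomorphism $A_1(X, x_0) \cong \mathsf{Aut}_{\Set^{\mathsf{Cov}(X)}}(\mathsf{Fib}_{x_0})$. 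The only real point requiring care is the explicit formula $f_{[\gamma]}([\sigma]) = [\gamma \ast \sigma]$, which is the bookkeeping step most likely to obscure the argument; it reduces to unwinding the description of the canonical path from $[c_{x_0}]$ to $[\sigma]$ in $\tilde{X}_{x_0}$ constructed in the proof of \cref{univ-cov-exists} and applying uniqueness of lifts.
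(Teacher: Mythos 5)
Your proof is correct and follows essentially the same route as the paper: part (1) via the initiality of $\left(\tilde{X}_{x_{0}},\left[c_{x_{0}}\right]\right)$ among pointed coverings, and part (2) via the Yoneda lemma combined with the identification $\mathsf{Cov}\left(X\right)\left(\tilde{X}_{x_{0}},\tilde{X}_{x_{0}}\right) \cong \mathsf{Fib}_{x_{0}}\left(\tilde{X}_{x_{0}}\right) \cong A_{1}\left(X,x_{0}\right)$. You in fact supply details the paper only asserts (invertibility of the endomorphisms of $\tilde{X}_{x_{0}}$, the explicit formula $f_{\left[\gamma\right]}\left(\left[\sigma\right]\right) = \left[\gamma \ast \sigma\right]$, and the opposite-group bookkeeping); the only quibble is the convention for the product on $A_{1}\left(X,x_{0}\right)$, which affects whether your map is a homomorphism or an anti-homomorphism but not the stated isomorphism.
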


\begin{proof}
We prove each part in turn.
\begin{enumerate}
    \item Given any $y_{0} \in \mathsf{Fib}_{x}\left(Y\right)$, we have a unique morphism of pointed coverings $\left(\tilde{X}_{x_{0}},\left[c_{x_{0}}\right]\right) \to \left(Y,y_{0}\right)$. On the other hand, given any morphism of coverings $f \from \tilde{X}_{x_{0}} \to Y$, we have a unique element $y_{0} = f\left(\left[c_{x_{0}}\right]\right)$ in $\mathsf{Fib}_{x_{0}}\left(Y\right)$.

    \item By the Yoneda lemma, we have the following bijection:
    \[
        \Set^{\mathsf{Cov}\left(X\right)}\left(\mathsf{Fib}_{x_{0}},\mathsf{Fib}_{x_{0}}\right) \cong \mathsf{Cov}\left(X\right)\left(\tilde{X}_{x_{0}},\tilde{X}_{x_{0}}\right) 
    \]
    Moreover, $\mathsf{Cov}\left(X\right)\left(\tilde{X}_{x_{0}},\tilde{X}_{x_{0}}\right)$ is in bijection with $\mathsf{Fib}_{x_{0}}\left(\tilde{X}_{x_{0}}\right)$, which is precisely the fundamental group $A_{1}\left(X,x_{0}\right)$.

    Since every morphism of coverings $\tilde{X}_{x_{0}} \to \tilde{X}_{x_{0}}$ must be invertible, we have:
    \[
        \mathsf{Aut}_{\mathsf{Cov}\left(X\right)}\left(\tilde{X}_{x_{0}}\right) = \mathsf{Cov}\left(X\right)\left(\tilde{X}_{x_{0}},\tilde{X}_{x_{0}}\right)
    \]
    and similarly, we have:
    \[
        \mathsf{Aut}_{\Set^{\mathsf{Cov}\left(X\right)}}\left(\mathsf{Fib}_{x_{0}}\right) = \Set^{\mathsf{Cov}\left(X\right)}\left(\mathsf{Fib}_{x_{0}},\mathsf{Fib}_{x_{0}}\right)
    \]
    Furthermore, we can check that all the bijections preserve the group structure and are, in fact, group isomorphisms. \qedhere
\end{enumerate}   
\end{proof}

\begin{example}
    For $n \geq 5$, the automorphism group of the universal cover $p \from I_{\infty} \to C_{n}$ is $\mathbb{Z}$, with an automorphism $i \mapsto i + nk$ for each $k \in \mathbb{Z}$.
    Thus, we have:
    \[
        \pi_{1}\left(C_{n}, \ast \right) \cong
        \begin{cases}
            0 & \text{for } n = 3, 4 \\
            \mathbb{Z} & \text{for } n \geq 5
        \end{cases}
    \]
\end{example}

\section{Seifert--Van Kampen theorem for graphs} \label{sec:kampen}

This section establishes an analogue of the familiar Seifert--van Kampen theorem from algebraic topology, a version of which was previously proven in A-homotopy theory in \cite{barcelo-kramer-laubenbacher-weaver}.
Our statement is a strengthening of the result found therein, obtained by refining the condition on the pushout square to be preserved.

Our preliminary statement of the Seifert--van Kampen theorem (\cref{thm:van-kampen}) for the fundamental groupoid contains the technical heart of the proof.
We subsequently give two more statements: \cref{thm:van-kampen-refined}, which is a refined version more convenient to use in practice, and \cref{thm:van-kampen-groups}, which specializes the theorem to the fundamental group.
We conclude this section with an example application of our refined Seifert--van Kampen \cref{thm:van-kampen-groups} to which the statement of \cite{barcelo-kramer-laubenbacher-weaver} would not have applied (\cref{ex:S2-simply-connected}).

We begin with a preliminary definition required to establish our technical assumption for the statement of the Seifert--van Kampen theorem.

\begin{definition} \leavevmode
\begin{enumerate}
    \item For any $m, n \in \mathbb{N}$, let the \emph{boundary} $\partial_{m,n} \from I_{2m+2n} \to I_{m} \gtimes I_{n}$ be the graph map given by:
    \[
        \partial_{m,n}\left(i\right) = 
        \begin{cases}
            \left(m-i,0\right) & \text{if } 0 \leq i \leq m \\
            \left(0,i-m\right) & \text{if } m \leq i \leq m+n \\
            \left(i-m-n,n\right) & \text{if } m+n \leq i \leq 2m+n \\
            \left(m,2n+2n-i\right) & \text{if } 2m+n \leq i \leq 2m+2n
        \end{cases}
    \]
\begin{figure}[h]
\centering
\begin{minipage}{0.45\textwidth}
\centering
\begin{tikzpicture}[colorstyle/.style={circle, draw=black, fill=black, thick, inner sep=0pt, minimum size=1 mm, outer sep=0pt},scale=1.5]
    \node (1) at (0,2) [colorstyle, label = above left: {$1$}, label = below right : {\tiny $(0,0)$}] {};
    \node (2) at (2,2) [colorstyle, label = above left: {$0$}, label = below right: {$4$}, label = below left : {\tiny $(1,0)$}] {};
    \node (3) at (0,0) [colorstyle, label = below left: {$2$}, label = above right : {\tiny $(0,1)$}] {};
    \node (4) at (2,0) [colorstyle, label = below right: {$3$}, label = above left : {\tiny $(1,1)$}] {};
    \draw [thick] (4) -- (3);
    \draw [thick] (3) -- (1);
    \draw [thick] (1) -- (2);
    \draw [thick] (2) -- (4);
    \node at (0.5,-1) [anchor = south]{$\partial_{1,1}$};
\end{tikzpicture}
\end{minipage}
\begin{minipage}{0.45\textwidth}
\centering
\begin{tikzpicture}[colorstyle/.style={circle, draw=black, fill=black, thick, inner sep=0pt, minimum size=1 mm, outer sep=0pt},scale=1.5]
    \node (1) at (2,2) [colorstyle, label = above left: {$0$}, label = below right: {$8$}, label = below left: {\tiny $(2,0)$}] {};
    \node (2) at (1,2) [colorstyle, label = above: {$1$}, label = below: {\tiny $(1,0)$}] {};
    \node (3) at (0,2) [colorstyle, label = above left: {$2$}, label = below right: {\tiny $(0,0)$}] {};
    \node (4) at (0,1) [colorstyle, label = left: {$3$}, label = right: {\tiny $(0,1)$}] {};
    \node (5) at (0,0) [colorstyle, label = below left: {$4$}, label = above right: {\tiny $(0,2)$}] {};
    \node (6) at (1,0) [colorstyle, label = below: {$5$}, label = above: {\tiny $(1,2)$}] {};
    \node (7) at (2,0) [colorstyle, label = below right: {$6$}, label = above left: {\tiny $(2,2)$}] {};
    \node (8) at (2,1) [colorstyle, label = right: {$7$}, label = left: {\tiny $(2,1)$}] {};
    \node at (1,1) {\tiny $(1,1)$};
    \draw [thick] (1) -- (2);
    \draw [thick] (2) -- (3);
    \draw [thick] (3) -- (4);
    \draw [thick] (4) -- (5);
    \draw [thick] (5) -- (6);
    \draw [thick] (6) -- (7);
    \draw [thick] (7) -- (8);
    \draw [thick] (8) -- (1);
    \draw [gray,thin,dotted] (2) -- (6);
    \draw [gray,thin,dotted] (4) -- (8);
    \node at (1,-1) [anchor = south]{$\partial_{2,2}$};
\end{tikzpicture}
\end{minipage}
\end{figure}
    
    \item Let $X$ be a graph and $h \from I_{1} \gtimes I_{1} \to X$ be any map. A \emph{net of $h$} is a pair $\left(H,s\right)$ consisting of a map $H \from I_{m} \gtimes I_{n} \to X$ for some $m, n \in \mathbb{N}$, together with a shrinking map $s \from I_{2m+2n} \to I_{4}$, such that the following diagram commutes:
    \[
    \begin{tikzcd}
        I_{2m+2n} \arrow[r,"\partial_{m,n}"] \arrow[d,"s"'] &
        I_{m} \gtimes I_{n} \arrow[r,"H"] &
        X \arrow[d,equal] \\
        I_{4} \arrow[r,"\partial_{1,1}"] &
        I_{1} \gtimes I_{1} \arrow[r,"h"] &
        X
    \end{tikzcd}
    \]

    \item Let $X$ be a graph and $H \from I_{m} \gtimes I_{n} \to X$ be any map. The \emph{$(i,j)$-cell of $H$}, denoted by $H_{i,j}$, is given by the following composite:
    \[
        H_{i,j} =
        \left(
        \begin{tikzcd}
            I_{1} \gtimes I_{1} \arrow[rr,"e_{i} \gtimes e_{j}"] &&
            I_{m} \gtimes I_{n} \arrow[r,"H"] &
            X
        \end{tikzcd}
        \right)
        \qquad
        \text{for } i = 1, \ldots, m \text{ and } j = 1, \ldots, n
    \]
\end{enumerate}
\end{definition}

The name `boundary map' used above might seem a little awkward in the context of chain complexes where a boundary map would go from a higher- to a lower-dimensional object.
Our choice of the name is inspired by simplicial/cubical homotopy theory, where one refers to the inclusion of the boundary of a simplex/cube as the `boundary map.'

\begin{theorem} \label{thm:van-kampen}
    Consider a pushout square in $\Graph$ as follows:
    \[
    \begin{tikzcd}
        X_{0} \arrow[r] \arrow[d] \arrow[dr,phantom,"\ulcorner" description, very near end] &
        X_{1} \arrow[d] \\
        X_{2} \arrow[r] &
        X
    \end{tikzcd}
    \]
    If every map $h \from I_{1} \gtimes I_{1} \to X$ satisfies the following net resolution condition:
    \begin{equation} \tag{N} \label{vK-cond} 
    \text{$h$ admits a net $\left(H,s\right)$ such that each cell $H_{i,j}$ of $H$ factors through $X_{1}$ or $X_{2}$}
    \end{equation}
    then the pushout square is preserved by the functor $\Pi_{1} \from \Graph \to \Gpd$. That is, we have the following pushout square in $\Gpd$:
    \[
    \begin{tikzcd}
        \Pi_{1}X_{0} \arrow[r] \arrow[d] \arrow[dr,phantom,"\ulcorner" description, very near end] &
        \Pi_{1}X_{1} \arrow[d] \\
        \Pi_{1}X_{2} \arrow[r] &
        \Pi_{1}X
    \end{tikzcd}
    \]
\end{theorem}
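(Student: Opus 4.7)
The plan is to verify that $\Pi_{1}X$ satisfies the universal property of the pushout of $\Pi_{1}X_{1}$ and $\Pi_{1}X_{2}$ over $\Pi_{1}X_{0}$ in $\Gpd$. Given a groupoid $\cat{G}$ together with functors $F_{1} \from \Pi_{1}X_{1} \to \cat{G}$ and $F_{2} \from \Pi_{1}X_{2} \to \cat{G}$ agreeing upon restriction along the two maps out of $\Pi_{1}X_{0}$, I would construct a functor $F \from \Pi_{1}X \to \cat{G}$ extending this cocone. On objects: the vertex functor $\V{(-)} \from \Graph \to \Set$ preserves colimits, so the vertex set of $X$ is the pushout of those of $X_{1}$ and $X_{2}$ over $X_{0}$, which forces the definition of $F$ on objects. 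On edges: every edge of $X$ is the image of an edge in $X_{1}$ or in $X_{2}$ (from the construction of pushouts in $\Graph$), so each length-$1$ path is assigned a morphism of $\cat{G}$ via $F_{1}$ or $F_{2}$, consistently by the compatibility hypothesis on $\Pi_{1}X_{0}$. Extend by concatenation to all finite paths and observe that the result is invariant under reparametrization.

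The main obstacle is to verify that this assignment descends to path-homotopy classes, giving a well-defined function $\Pi_{1}X\left(x,x'\right) \to \cat{G}\left(Fx,Fx'\right)$. I would reduce this to the following single-square claim: for every $h \from I_{1} \gtimes I_{1} \to X$, the two paths $h\left(-,0\right) \ast h\left(1,-\right)$ and $h\left(0,-\right) \ast h\left(-,1\right)$ from $h\left(0,0\right)$ to $h\left(1,1\right)$ have equal image under $F$. The single-square claim suffices because any path-homotopy $H \from I_{m} \gtimes I_{n} \to X$ can be transformed from its initial row $H\left(-,0\right)$ to its final row $H\left(-,n\right)$ by applying the single-square relation at one cell $H_{i,j}$ at a time, in a standard staircase fashion on the underlying grid.

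When $h$ factors through $X_{1}$ or $X_{2}$, the single-square claim is immediate: the two boundary paths are already path-homotopic within the respective subgraph (as witnessed by the square itself, essentially via the case $m=n=1$ of \cref{lem:paths-in-Im-times-In}), so the equality is preserved by $F_{1}$ or $F_{2}$, and hence by $F$. For a general $h$, apply the net resolution condition \eqref{vK-cond} to obtain $\left(H,s\right)$ with $H \from I_{m} \gtimes I_{n} \to X$ all of whose cells factor through $X_{1}$ or $X_{2}$. A staircase induction within the grid $H$, using the already-established factorizable case at each cell of $H$, shows that the two monotone boundary paths of $H$ from $\left(0,0\right)$ to $\left(m,n\right)$ have equal image under $F$. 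The defining commuting diagram of the net, together with the invariance of $F$ under reparametrization through $s$, then transfers this equality back to the two boundary paths of $h$.

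Uniqueness of $F$ is automatic: any extension of the cocone must agree with $F_{1}$ and $F_{2}$ on the length-$1$ paths originating in $X_{1}$ or $X_{2}$, and these generate all morphisms of $\Pi_{1}X$.
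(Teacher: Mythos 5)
Your proposal is correct and follows essentially the same route as the paper, which packages your construction (edges determined by $F_{1},F_{2}$, extension by concatenation, reparametrization invariance, the single-square condition, and the staircase/grid argument over the net) into \cref{lem:functors-out-of-Pi1} and then verifies the universal property exactly as you do.
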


To prove this theorem, we will first need to characterize functors of the form $F \from \Pi_{1}X \to \cat{G}$ for any given graph $X$ and groupoid $\cat{G}$.
By definition of the fundamental groupoid (see \cref{def:fund-gpd}), a functor $F \from \Pi_{1}X \to \cat{G}$ consists of the following data: a set-function $F \from \V{X} \to \ob{\cat{G}}$ on objects, and for each pair of vertices $x,x' \in X$, a set-function $F_{x,x'} \from \pi_{0}P_{\mathbb{N}}X\left(x,x'\right) \to \cat{G}(Fx,Fx')$, subject to functoriality.
By adjointness (see \cref{prop:pi0-discrete-graph-adjunction}), the set-functions
\[
    F_{x,x'} \from \pi_{0}P_{\mathbb{N}}X\left(x,x'\right) \to \cat{G}(Fx,Fx')
\]
correspond to graph maps
\[
    F_{x,x'} \from P_{\mathbb{N}}X\left(x,x'\right) \to \cat{G}(Fx,Fx')_{\mathsf{discrete}}
\]
which in turn correspond to (see \cref{def:path-graphs}) graph maps
\[
    F_{x,x'}^{(n)} \from P_{n}X\left(x,x'\right) \to \cat{G}(Fx,Fx')_{\mathsf{discrete}}, \qquad n \in \mathbb{N}.
\]
subject to compatibility with reparametrization by shrinking maps.
Finally, since each path $\gamma \from x \rightsquigarrow x'$ in $X$ of finite length $n$ can be expressed as a concatenation of $n$ paths of length $1$ i.e. $\gamma = e_{1}^{*}(\gamma) \ast \cdots \ast e_{n}^{\ast}(\gamma)$, by functoriality, it suffices to describe the graph maps
\[
    F_{x,x'}^{(1)} \from P_{1}X\left(x,x'\right) \to \cat{G}(Fx,Fx')_{\mathsf{discrete}}.
\]
Thus, given a functor $F \from \Pi_{1}X \to \cat{G}$, we have the data of a set-function $F \from \V{X} \to \ob{\cat{G}}$ on objects, and for each pair of vertices $x,x' \in X$, the graph maps $F_{x,x'}^{(1)} \from P_{1}X\left(x,x'\right) \to \cat{G}(Fx,Fx')_{\mathsf{discrete}}$.
The following lemma tells us the conditions under which, we can build a functor $F \from \Pi_{1}X \to \cat{G}$ out of any given data of this form.

\begin{lemma} \label{lem:functors-out-of-Pi1}
    Let $X$ be a graph and $\cat{G}$ be a groupoid. Further suppose we are given the following data:
    \begin{itemize}
        \item a function $F \from \V{X} \to \ob{\cat{G}}$
        \item graph maps $F_{x,x'}^{(1)} \from P_{1}X\left(x,x'\right) \to \cat{G}\left(Fx,Fx'\right)_{\mathsf{discrete}}$, one for each pair of vertices $x,x' \in X$
    \end{itemize}
    subject to the following conditions:
    \begin{itemize}
        \item $F_{x,x}^{(1)}\left(c_{x}\right) = \text{id}_{Fx}$ for each $x \in X$
        \item $F_{x',x}^{(1)}\left(\overline{e}\right) = F_{x,x'}^{(1)}\left(e\right)^{-1}$ for each path $e \from x \rightsquigarrow x'$ of length $1$
    \end{itemize}
    Given any path $\gamma \from x \rightsquigarrow x'$ of length $n \in \mathbb{N}$, let $F_{x,x'}^{(n)}\left(\gamma\right) \in \cat{G}\left(Fx,Fx'\right)$ be given by:
    \[
        F_{x,x'}^{(n)}\left(\gamma\right) = F_{x_{n-1},x_{i}}^{(1)}\left(e_{n}^{*}\left(\gamma\right)\right) \circ \cdots \circ F_{x_{0},x_{1}}^{(1)}\left(e_{1}^{*}\left(\gamma\right)\right)
    \]
    where $x_{i} = \gamma\left(i\right)$ for $i = 0, \ldots, n$. Then, we have the following:
    \begin{enumerate}
        \item Given any path $\gamma \from x \rightsquigarrow x'$ of length $m$ and any path $\sigma \from x' \rightsquigarrow x''$ of length $n$, we have:
        \[
            F_{x,x''}^{(m+n)}\left(\gamma \ast \sigma\right) = F_{x',x''}^{(n)}\left(\sigma\right) \circ F_{x,x'}^{(m)}\left(\gamma\right)
        \]

        \item Given any path $\gamma \from x \rightsquigarrow x'$ of length $n$, we have:
        \[
            F_{x',x}^{(n)}\left(\overline{\gamma}\right) = F_{x,x'}^{(n)}\left(\gamma\right)^{-1}
        \]

        \item Given any path $\gamma \from x \rightsquigarrow x'$ of length $n$ and any shrinking map $s \from I_{m} \to I_{n}$, we have:
        \[
            F_{x,x'}^{(m)}\left(\gamma \circ s\right) = F_{x,x'}^{(n)}\left(\gamma\right)
        \]
        
        \item Given any map $H \from I_{m} \gtimes I_{n} \to X$ where each cell $H_{i,j}$ satisfies the condition:
        \[
            F^{(4)}_{x_{i,j},x_{i,j}}\left(H_{i,j} \circ \partial_{1,1}\right) = \text{id}_{Fx_{i,j}}
        \]
        where $x_{i,j} = \left(H_{i,j} \circ \partial_{1,1}\right)\left(0\right)$, we have:
        \[
            F^{(2m+2n)}_{x,x}\left(H \circ \partial_{m,n}\right) = \text{id}_{Fx},
        \]
        where $x = \left(H \circ \partial_{m,n}\right)\left(0\right)$.
        
        \item If, for each map $h \from I_{1} \gtimes I_{1} \to X$, we have
        \[
            F_{x,x}^{(4)}\left(h \circ \partial_{1,1} \right) = \text{id}_{Fx}
        \]
        where $x = \left(h \circ \partial_{1,1}\right)\left(0\right)$, then
        \[
            F_{x,x'}^{(n)} \from P_{n}X\left(x,x'\right) \to \cat{G}\left(Fx,Fx'\right)_{\mathsf{discrete}}
        \]
        is a well-defined graph map for all $n \in \mathbb{N}$ and all $x, x' \in X$.

        \item If, for each map $h \from I_{1} \gtimes I_{1} \to X$, we have
        \[
            F_{x,x}^{(4)}\left(h \circ \partial_{1,1} \right) = \text{id}_{Fx}
        \]
        where $x = \left(h \circ \partial_{1,1}\right)\left(0\right)$, then we have a well-defined functor $F \from \Pi_{1}X \to \cat{G}$ that maps $x \in X$ to $Fx \in \ob{\cat{G}}$ and a morphism $\left[\gamma\right] \from x \to x'$, where $\gamma \from x \rightsquigarrow x'$ is a path in $X$ of length $n$ to $F_{x,x'}^{(n)}\left(\gamma\right)$.
    \end{enumerate}
\end{lemma}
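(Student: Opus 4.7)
The plan is to dispatch the six parts in order, with parts (1)--(3) being largely bookkeeping about the defining formula of $F^{(n)}$, part (4) carrying the main technical content, and parts (5) and (6) following quickly.

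For part (1), expanding both sides using the defining composition $F^{(n)}(\gamma) = F^{(1)}(e_n^*(\gamma)) \circ \cdots \circ F^{(1)}(e_1^*(\gamma))$ and observing that the edges of $\gamma \ast \sigma$ are, in order, the edges of $\gamma$ followed by those of $\sigma$ gives the result immediately. Part (2) is similar: the edges of $\overline{\gamma}$ are $\overline{e_n^*(\gamma)}, \ldots, \overline{e_1^*(\gamma)}$ read in reverse, so the hypothesis $F^{(1)}(\overline{e}) = F^{(1)}(e)^{-1}$ together with the identity $(ab)^{-1} = b^{-1} a^{-1}$ in $\cat{G}$ yields the claim. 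For part (3), each edge of $\gamma \circ s$ is either a constant edge (when $s(i) = s(i-1)$), contributing an identity by the hypothesis $F^{(1)}(c_x) = \text{id}_{Fx}$, or the edge $e_{s(i)}^*(\gamma)$ (when $s(i) = s(i-1) + 1$); discarding identity factors reduces the composition for $F^{(m)}(\gamma \circ s)$ to that for $F^{(n)}(\gamma)$.

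For part (4), I would induct on $mn$, the number of cells. The base case $mn = 1$ is precisely the cell hypothesis. For the inductive step, assume without loss of generality that $n \geq 2$ and decompose $I_m \gtimes I_n$ along the horizontal line $y = 1$ into the top strip $H_{\mathrm{top}} \from I_m \gtimes I_1 \to X$ and the bottom block $H_{\mathrm{bot}} \from I_m \gtimes I_{n-1} \to X$ (after the evident shift). Writing $m_H = H(-, 1)$ for the middle row and $r_1$ for the right-side edge from $H(m,0)$ to $H(m,1)$, inserting the $F$-trivial segments $m_H \ast \overline{m_H}$ and $\overline{r_1} \ast r_1$ into $H \circ \partial_{m, n}$ and regrouping, parts (1) and (2) produce
\[
F^{(2m+2n)}(H \circ \partial_{m,n}) = F^{(1)}(r_1)^{-1} \circ F^{(2m+2(n-1))}(H_{\mathrm{bot}} \circ \partial_{m, n-1}) \circ F^{(1)}(r_1) \circ F^{(2m+2)}(H_{\mathrm{top}} \circ \partial_{m, 1}).
\]
Since every cell of $H_{\mathrm{top}}$ and $H_{\mathrm{bot}}$ is a cell of $H$, the inductive hypothesis makes both inner boundary terms equal to the identity, and the whole expression collapses to $\text{id}_{Fx}$.

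Part (5) then follows: any edge $\gamma \sim \sigma$ in $P_n X(x, x')$ is witnessed by a map $H \from I_n \gtimes I_1 \to X$ whose left and right vertical edges are constant (since $\gamma$ and $\sigma$ share endpoints). Applying part (4) to $H$ and using parts (1)--(3) yields $F^{(n)}(\sigma) \circ F^{(n)}(\gamma)^{-1} = \text{id}_{Fx'}$, and since the target is discrete, this is exactly the graph-map condition. For part (6), parts (3) and (5) show that $F^{(n)}_{x, x'}(\gamma)$ depends only on the path-homotopy class $[\gamma]$, yielding a well-defined morphism assignment; functoriality follows immediately from part (1) and the normalization $F^{(1)}(c_x) = \text{id}_{Fx}$. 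The main obstacle is part (4), where the composition-order bookkeeping must be managed carefully so that the inserted cancelling segments reorganize cleanly into a product of inductively trivial sub-boundaries.
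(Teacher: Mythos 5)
Your proposal is correct and takes essentially the same route as the paper: parts (1)--(3) by inspecting the edges of concatenations, reverses, and reparametrizations, part (5) via the homotopy $H \from I_{n} \gtimes I_{1} \to X$ whose boundary is $\overline{\gamma} \ast c_{x} \ast \sigma \ast c_{x'}$, and part (6) by descending to $\pi_{0}P_{\mathbb{N}}X\left(x,x'\right)$. The only difference is cosmetic: where the paper disposes of part (4) by asserting that a grid of commuting squares in $\cat{G}$ has commuting outer boundary, you carry out that pasting explicitly by induction on the number of cells, splitting off a strip and conjugating by the side edge $r_{1}$ --- a correct implementation of the same idea.
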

\begin{proof}
\begin{enumerate}
    \item This follows from the observation that:
    \[
        e_{i}^{*}\left(\gamma \ast \sigma\right) =
        \begin{cases}
            e_{i}^{*}\left(\gamma\right) & \text{for } i = 1, \ldots, m \\
            e_{i-m}^{*}\left(\sigma\right) & \text{for } i = m+1, \ldots, m+n
        \end{cases}
    \]

    \item This follows from the observation that:
    \[
        e_{i}^{*}\left(\overline{\gamma}\right) = \overline{e_{n+1-i}^{*}\left(\gamma\right)}
    \]

    \item This follows from the observation that:
    \[
        e_{i}^{*}\left(\gamma \circ s\right) =
        \begin{cases}
            c_{\left(\gamma \circ s\right)\left(i\right)} & \text{if } s\left(i\right) = s\left(i-1\right) \\
            e_{s\left(i\right)}^{*}\left(\gamma\right) & \text{if } s\left(i\right) = s\left(i-1\right) + 1
        \end{cases}
    \]

    \item Given any map $H \from I_{m} \gtimes I_{n} \to X$, we have a diagram in $\cat{G}$ as follows:
    \[
    \begin{tikzcd}
        FH\left(0,0\right) \arrow[r] \arrow[d] & FH\left(1,0\right) \arrow[r] \arrow[d] & \cdots \arrow[r] & FH\left(m-1,0\right) \arrow[r] \arrow[d] & FH\left(m,0\right) \arrow[d] \\
        FH\left(0,1\right) \arrow[r] \arrow[d] & FH\left(1,1\right) \arrow[r] \arrow[d] & \cdots \arrow[r] & FH\left(m-1,1\right) \arrow[r] \arrow[d] & FH\left(m,1\right) \arrow[d] \\
        \vdots \arrow[d] & \vdots \arrow[d]  & \ddots & \vdots \arrow[d] & \vdots \arrow[d]  \\
        FH\left(0,n-1\right) \arrow[r] \arrow[d] & FH\left(1,n-1\right) \arrow[r] \arrow[d] & \cdots \arrow[r] & FH\left(m-1,n-1\right) \arrow[r] \arrow[d] & FH\left(m,n-1\right) \arrow[d] \\
        FH\left(0,n\right) \arrow[r] & FH\left(1,n\right) \arrow[r] & \cdots \arrow[r] & FH\left(m-1,n\right) \arrow[r] & FH\left(m,n\right)
    \end{tikzcd}
    \]
    where every arrow is invertible. If any particular cell $H_{i,j} \from I_{1} \gtimes I_{1} \to X$ satisfies the condition
    \[
        F^{(4)}_{x_{i,j},x_{i,j}}\left(H_{i,j} \circ \partial_{1,1}\right) = \text{id}_{Fx_{i,j}}
    \]
    where $x_{i,j} = \left(H_{i,j} \circ \partial_{1,1}\right)\left(0\right)$,
    then it means that the corresponding square in the above diagram is commutative. But if every square in the above diagram commutes, then the outer boundary of the diagram also commutes. Hence, the condition
    \[
        F^{(2m+2n)}_{x,x}\left(H \circ \partial_{m,n}\right) = \text{id}_{Fx},
    \]
    where $x = \left(H \circ \partial_{m,n}\right)\left(0\right)$, is satisfied.
    
    \item In order to check that
    \[
        F_{x,x'}^{(n)} \from P_{n}X\left(x,x'\right) \to \cat{G}\left(Fx,Fx'\right)_{\mathsf{discrete}}
    \]
    is a well-defined graph map, we need to verify that
    \[
        F_{x,x'}^{(n)}\left(\gamma\right) = F_{x,x'}^{(n)}\left(\sigma\right)
    \]
    for any two paths $\gamma, \sigma \from x \rightsquigarrow x'$ of length $n$ that are adjacent in the graph $P_{n}X\left(x,x'\right)$.

    Let $H \from I_{n} \gtimes I_{1} \to X$ be a map given by $H\left(-,0\right) = \gamma$ and $H\left(-,1\right) = \sigma$.

    Since every cell $H_{i,j} \from I_{1} \gtimes I_{1} \to X$ satisfies the condition
    \[
        F^{(4)}_{x_{i,j},x_{i,j}}\left(H_{i,j} \circ \partial_{1,1}\right) = \text{id}_{Fx_{i,j}}
    \]
    where $x_{i,j} = \left(H_{i,j} \circ \partial_{1,1}\right)\left(0\right)$, we have:
    \[
        F_{x',x'}^{(2n+2)}\left(H \circ \partial_{n,1}\right) = \text{id}_{Fx'}
    \]
    Noting that
    \[
        H \circ \partial_{n,1} = \overline{\gamma} \ast c_{x} \ast \sigma \ast c_{x'}
    \]
    we are done.
    
    \item We have well-defined graph maps
    \[
        F_{x,x'}^{(n)} \from P_{n}X\left(x,x'\right) \to \cat{G}\left(Fx,Fx'\right)_{\mathsf{discrete}}
    \]
    that satisfy
    \[
        F_{x,x'}^{(m)}\left(\gamma \circ s\right) = F_{x,x'}^{(n)}\left(\gamma\right)
    \]
    for every path $\gamma \from x \rightsquigarrow x'$ of length $n$ and every shrinking map $s \from I_{m} \to I_{n}$.

    Thus, we have well-defined graph maps
    \[
        F_{x,x'} \from P_{\mathbb{N}}X\left(x,x'\right) \to \cat{G}\left(Fx,Fx'\right)_{\mathsf{discrete}}
    \]
    for each $x, x' \in X$. Equivalently, we have well-defined set-functions:
    \[
        F_{x,x'} \from \pi_{0}P_{\mathbb{N}}X\left(x,x'\right) \to \cat{G}\left(Fx,Fx'\right)
    \]
    Furthermore, we have $F_{x,x}\left[c_{x}\right] = \text{id}_{Fx}$ for each $x \in X$ and $F_{x,x''}\left[\gamma \ast \sigma\right] = F_{x',x''}\left[\sigma\right] \circ F_{x,x'}\left[\gamma\right]$ for each pair of paths $\gamma \from x \rightsquigarrow x'$ and $\sigma \from x' \rightsquigarrow x''$.

    Thus, we have a well-defined functor $F \from \Pi_{1}X \to \cat{G}$.
\end{enumerate}
\end{proof}

We can now prove \cref{thm:van-kampen}.

\begin{proof}[Proof of \cref{thm:van-kampen}]
    Let $\cat{G}$ be any groupoid, and let $F_{1} \from \Pi_{1}X_{1} \to \cat{G}$ and $F_{2} \from \Pi_{1}X_{2} \to \cat{G}$ be functors such that the outer square in the following diagram commutes:
    \[
    \begin{tikzcd}
        \Pi_{1}X_{0} \arrow[r] \arrow[d] &
        \Pi_{1}X_{1} \arrow[d] \arrow[ddr, bend left = 50,"F_{1}"] & \\
        \Pi_{1}X_{2} \arrow[r] \arrow[drr, bend right = 40,"F_{2}"] &
        \Pi_{1}X \arrow[dr, dotted,"F" description] & \\
        & & \cat{G}
    \end{tikzcd}
    \]
    We want to show that there exists a unique functor $F \from \Pi_{1}X \to \cat{G}$ such that the above diagram commutes.

    By \cref{lem:functors-out-of-Pi1}, it suffices to provide the following data:
    \begin{itemize}
        \item[$\bullet$] a function $F \from \V{X} \to \ob{\cat{G}}$
        \item[$\bullet$] graph maps $F_{x,x'}^{(1)} \from P_{1}X\left(x,x'\right) \to \cat{G}\left(Fx,Fx'\right)_{\mathsf{discrete}}$, one for each pair of vertices $x,x' \in X$
    \end{itemize}
    subject to the following conditions:
    \begin{itemize}
        \item[$\bullet$] $F_{x,x}^{(1)}\left(\mathrm{c}_{x}\right) = \mathrm{id}_{Fx}$ for each $x \in X$
        \item[$\bullet$] $F_{x',x}^{(1)}\left(\overline{e}\right) = F_{x,x'}^{(1)}\left(e\right)^{-1}$ for each path $e \from x \rightsquigarrow x'$ of length $1$
        \item[$\bullet$] for each map $h \from I_{1} \gtimes I_{1} \to X$, we have
        \[
            F_{x,x}^{(4)}\left(h \circ \partial_{1,1} \right) = \mathrm{id}_{Fx}
        \]
        where $x = \left(h \circ \partial_{1,1}\right)\left(0\right)$
    \end{itemize}
    
    The function $F \from \V{X} \to \ob{\cat{G}}$ is uniquely determined by $F_{1}$ and $F_{2}$ since we have the following pushout square in $\Set$:
    \[
    \begin{tikzcd}
        \V{(X_{0})} \arrow[r] \arrow[d] \arrow[dr,phantom,"\ulcorner" description, very near end] &
        \V{(X_{1})} \arrow[d] \\
        \V{(X_{2})} \arrow[r] &
        \V{X}
    \end{tikzcd}
    \]

    Similarly, since every path of length $1$ in $X$ factors through either $X_{1}$ or $X_{2}$, the functions $F_{x,x'}^{(1)} \from P_{1}X\left(x,x'\right) \to \cat{G}\left(Fx,Fx'\right)$ are also uniquely determined by $F_{1}$ and $F_{2}$.

    The first two conditions can also be easily verified.
    It only remains to verify the third condition.

    By the hypothesis, every map $h \from I_{1} \gtimes I_{1} \to X$ admits a net $\left(H,s\right)$ such that each cell $H_{i,j}$ of $H$ factors through either $X_{1}$ or $X_{2}$.
    It follows that each cell $H_{i,j}$ satisfies the condition
    \[
        F^{(4)}_{x_{i,j},x_{i,j}}\left(H_{i,j} \circ \partial_{1,1}\right) = \mathrm{id}_{Fx_{i,j}}
    \]
    where $x_{i,j} = \left(H_{i,j} \circ \partial_{1,1}\right)\left(0\right)$.
    By part (4) of \cref{lem:functors-out-of-Pi1}, it follows that we have:
    \[
        F^{(2m+2n)}_{x,x}\left(H \circ \partial_{m,n}\right) = \mathrm{id}_{Fx},
    \]
    where $x = \left(H \circ \partial_{m,n}\right)\left(0\right)$.
    But since $\left(H,s\right)$ is a net of $h$, it follows that:
    \[
        H \circ \partial_{m,n} = h \circ \partial_{1,1} \circ s 
    \]
    Thus, we have $F^{(2m+2n)}_{x,x}\left(h \circ \partial_{1,1} \circ s\right) = \mathrm{id}_{Fx}$.
    By part (3) of \cref{lem:functors-out-of-Pi1}, it follows that $F_{x,x}^{(4)}\left(h \circ \partial_{1,1} \right) = \mathrm{id}_{Fx}$, as required.
\end{proof}

In practice, one does not need to check the net resolution condition (\ref{vK-cond}) of \cref{thm:van-kampen} for all maps $I_{1} \gtimes I_{1} \to X$, since for many of them it is trivially satisfied.
In \cref{thm:van-kampen-refined}, we give a refined version of \cref{thm:van-kampen} that only requires checking the possibly problematic situation.

The table below gives all possible shapes that the image of a map $h \from I_{1} \gtimes I_{1} \to X$ can take. For convenience, we will name the vertices as follows:
\begin{figure}[H]
\centering
\begin{tikzpicture}[colorstyle/.style={circle, draw=black, fill=black, thick, inner sep=0pt, minimum size=1 mm, outer sep=0pt},scale=1.5]
    \node (1) at (0,1) [colorstyle, label = above left: {$a = h\left(0,0\right)$}] {};
    \node (2) at (1,1) [colorstyle, label = above right: {$b = h\left(1,0\right)$}] {};
    \node (3) at (0,0) [colorstyle, label = below left: {$c = h\left(0,1\right)$}] {};
    \node (4) at (1,0) [colorstyle, label = below right: {$d = h\left(1,1\right)$}] {};
    \draw [thick] (4) -- (3);
    \draw [thick] (3) -- (1);
    \draw [thick] (1) -- (2);
    \draw [thick] (2) -- (4);
\end{tikzpicture}
\end{figure}

Then, up to symmetry of $I_{1} \gtimes I_{1}$, a map $h \from I_{1} \gtimes I_{1} \to X$ can take the following distinct shapes:

\begin{longtable}{| c | c | c |} 
    \hline
    Image & Diagram & Conditions on $a, b, c, d$ \\
    \hline
    \hline
    \text{single vertex} &
    \begin{tikzpicture}[colorstyle/.style={circle, draw=black, fill=black, thick, inner sep=0pt, minimum size=1 mm, outer sep=0pt},scale=1.5]
        \node (0) at (0,0) [colorstyle, label = above: {~}, label = below: {~}] {};
    \end{tikzpicture}
    & $a = b = c = d$ \\
    \hline
    \hline
    \text{single edge} &
    \begin{tikzpicture}[colorstyle/.style={circle, draw=black, fill=black, thick, inner sep=0pt, minimum size=1 mm, outer sep=0pt},scale=1.5]
        \node (0) at (0,0) [colorstyle, label = above: {~}, label = below: {~}] {};
        \node (1) at (1,0) [colorstyle, label = above: {~}, label = below: {~}] {};
        \draw [thick] (0) -- (1);
    \end{tikzpicture}
    & $a = c$ and $b = d$ \\
    \hline
    \hline
    \text{single edge} &
    \begin{tikzpicture}[colorstyle/.style={circle, draw=black, fill=black, thick, inner sep=0pt, minimum size=1 mm, outer sep=0pt},scale=1.5]
        \node (0) at (0,0) [colorstyle, label = above: {~}, label = below: {~}] {};
        \node (1) at (1,0) [colorstyle, label = above: {~}, label = below: {~}] {};
        \draw [thick] (0) -- (1);
    \end{tikzpicture}
    & $a = d$ and $b = c$ \\
    \hline
    \hline
    \text{single edge} &
    \begin{tikzpicture}[colorstyle/.style={circle, draw=black, fill=black, thick, inner sep=0pt, minimum size=1 mm, outer sep=0pt},scale=1.5]
        \node (0) at (0,0) [colorstyle, label = above: {~}, label = below: {~}] {};
        \node (1) at (1,0) [colorstyle, label = above: {~}, label = below: {~}] {};
        \draw [thick] (0) -- (1);
    \end{tikzpicture}
    & $a = b = c$ \\
    \hline
    \hline
    \text{pair of edges} &
    \begin{tikzpicture}[colorstyle/.style={circle, draw=black, fill=black, thick, inner sep=0pt, minimum size=1 mm, outer sep=0pt},scale=1.5]
        \node (0) at (0.25,0) [colorstyle, label = above: {~}, label = below: {~}] {};
        \node (1) at (1,0) [colorstyle, label = above: {~}, label = below: {~}] {};
        \node (2) at (1.75,0) [colorstyle, label = above: {~}, label = below: {~}] {};
        \draw [thick] (0) -- (1);
        \draw [thick] (1) -- (2);
    \end{tikzpicture}
    & $a = d$ \\ \hline \hline
    \text{a 3-cycle} &
    \begin{tikzpicture}[colorstyle/.style={circle, draw=black, fill=black, thick, inner sep=0pt, minimum size=1 mm, outer sep=0pt},scale=1.5]
        \node (0) at (0.5,0.866) [colorstyle, label = above: {~}, label = below: {~}] {};
        \node (1) at (1,0) [colorstyle, label = above: {~}, label = below: {~}] {};
        \node (2) at (0,0) [colorstyle, label = above: {~}, label = below: {~}] {};
        \draw [thick] (0) -- (1);
        \draw [thick] (1) -- (2);
        \draw [thick] (2) -- (0);
    \end{tikzpicture}
    & $a = b$ \\ \hline \hline
    \text{a 4-cycle} &
    \begin{tikzpicture}[colorstyle/.style={circle, draw=black, fill=black, thick, inner sep=0pt, minimum size=1 mm, outer sep=0pt},scale=1.5]
        \node (0) at (0,1) [colorstyle, label = above: {~}, label = below: {~}] {};
        \node (1) at (1,1) [colorstyle, label = above: {~}, label = below: {~}] {};
        \node (2) at (1,0) [colorstyle, label = above: {~}, label = below: {~}] {};
        \node (3) at (0,0) [colorstyle, label = above: {~}, label = below: {~}] {};
        \draw [thick] (0) -- (1);
        \draw [thick] (1) -- (2);
        \draw [thick] (2) -- (3);
        \draw [thick] (3) -- (0);
    \end{tikzpicture}
    & $\varnothing$ \\ \hline
\end{longtable}

Note that whenever the map $h \from I_{1} \gtimes I_{1} \to X$ itself factors through one of $X_{1}$ or $X_{2}$, it trivially satisfies the net resolution condition (\ref{vK-cond}).
This applies to all maps $h \from I_{1} \gtimes I_{1} \to X$ whose image in $X$ is either a single vertex or a single edge.

Let us also consider the case when the image of a map $h \from I_{1} \gtimes I_{1} \to X$ is a pair of edges. If $h$ does not itself factor through either $X_{1}$ or $X_{2}$, then we must have that one of the edges factors through $X_{1}$ while the other factors through $X_{2}$. We can visualize such a map as follows:
\begin{figure}[H]
\centering
\begin{tikzpicture}[colorstyle/.style={circle, draw=black, fill=black, thick, inner sep=0pt, minimum size=1 mm, outer sep=0pt},scale=1.5]
    \node (1) at (0,1) [colorstyle, label = above left: {$a$}] {};
    \node (2) at (1,1) [colorstyle, label = above right: {$b$}] {};
    \node (3) at (0,0) [colorstyle, label = below left: {$c$}] {};
    \node (4) at (1,0) [colorstyle, label = below right: {$a$}] {};
    \draw [thick] (4) -- (3);
    \draw [thick] (3) -- (1);
    \draw [thick] (1) -- (2);
    \draw [thick] (2) -- (4);
    \node at (0.5,-0.8) [anchor = south]{a pair of edges in $X$, with $a \sim b$ in $X_{1}$ and $a \sim c$ in $X_{2}$};
\end{tikzpicture}
\end{figure}

Even so, $h$ satisfies the condition (\ref{vK-cond}), with a net $H \from I_{2} \gtimes I_{2} \to X$ that can be visualized as follows:
\begin{figure}[H]
\centering
\begin{tikzpicture}[colorstyle/.style={circle, draw=black, fill=black, thick, inner sep=0pt, minimum size=1 mm, outer sep=0pt},scale=1.5]
    \node (1) at (0,2) [colorstyle, label = above left: {$a$}] {};
    \node (2) at (1,2) [colorstyle, label = above: {$b$}] {};
    \node (3) at (2,2) [colorstyle, label = above right: {$b$}] {};
    \node (4) at (0,1) [colorstyle, label = left: {$a$}] {};
    \node (5) at (1,1) [colorstyle, label = above right: {$a$}] {};
    \node (6) at (2,1) [colorstyle, label = right: {$a$}] {};
    \node (7) at (0,0) [colorstyle, label = below left: {$c$}] {};
    \node (8) at (1,0) [colorstyle, label = below: {$c$}] {};
    \node (9) at (2,0) [colorstyle, label = below right: {$a$}] {};
    \draw [thick] (1) -- (3);
    \draw [thick] (4) -- (6);
    \draw [thick] (7) -- (9);
    \draw [thick] (1) -- (7);
    \draw [thick] (2) -- (8);
    \draw [thick] (3) -- (9);
    \node at (1,-0.8) [anchor = south]{a net for $h$};
\end{tikzpicture}
\end{figure}

Thus, it suffices to check the net resolution condition (\ref{vK-cond}) for those maps $h \from I_{1} \gtimes I_{1} \to X$ whose image is a 3- or 4-cycle in $X$.

\begin{theorem}[Seifert--van Kampen Theorem for Fundamental Groupoid]\label{thm:van-kampen-refined}
    Consider a pushout square in $\Graph$ as follows:
    \[
    \begin{tikzcd}
        X_{0} \arrow[r] \arrow[d] \arrow[dr,phantom,"\ulcorner" description, very near end] &
        X_{1} \arrow[d] \\
        X_{2} \arrow[r] &
        X
    \end{tikzcd}
    \]
    If every map $h \from I_{1} \gtimes I_{1} \to X$ whose image in $X$ is a 3- or 4-cycle satisfies the following net resolution condition:
    \begin{equation} \tag{N} 
    \text{$h$ admits a net $\left(H,s\right)$ such that each cell $H_{i,j}$ of $H$ factors through $X_{1}$ or $X_{2}$}
    \end{equation}
    then the pushout square is preserved by the functor $\Pi_{1} \from \Graph \to \Gpd$. \qed
\end{theorem}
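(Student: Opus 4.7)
The plan is to reduce the statement to \cref{thm:van-kampen} by showing that, given the pushout hypothesis, every map $h \from I_1 \gtimes I_1 \to X$ whose image is \emph{not} a $3$-cycle or $4$-cycle automatically satisfies condition (N). Since the excerpt already enumerates, up to the symmetries of $I_1 \gtimes I_1$, the possible shapes of $\image(h) \subseteq X$, the proof proceeds by case analysis over these shapes and combines the resulting cases with the explicit hypothesis in the remaining two.

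First, I would handle the degenerate cases where $\image(h)$ is a single vertex or a single edge (including the collapsed shape where $a = b = c$). Because $\V{X} = \V{X_1} \cup \V{X_2}$ and $\E{X} = \E{X_1} \cup \E{X_2}$ by the pushout in $\Graph$ (recall that $\V{(-)}$ preserves colimits, and the same holds for edges), any such $h$ factors through either $X_1$ or $X_2$. Hence the trivial pair $(h, \id \from I_4 \to I_4)$, with $m = n = 1$, is already a net whose single cell $H_{1,1} = h$ factors through $X_1$ or $X_2$ as required.

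Next I would treat the pair-of-edges case, where $\image(h)$ consists of two edges joined at the common vertex $a = h(0,0) = h(1,1)$. If both edges lie in the same $X_k$, we are done as above. Otherwise, say $a \sim b$ lies in $X_1$ and $a \sim c$ lies in $X_2$; this is precisely the situation displayed in the excerpt. Here I would write down the $2 \gtimes 2$ net $H \from I_2 \gtimes I_2 \to X$ explicitly (as drawn in the text, with central vertex $a$ and boundary reading $a,b,b,a,c,c,a,a$), paired with the unique shrinking map $s \from I_8 \to I_4$ collapsing each straight segment of $\partial_{2,2}$ onto the corresponding segment of $\partial_{1,1}$. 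Each of the four cells of $H$ degenerates to either a single edge in $X_1$ (the two cells adjacent to the edge $a \sim b$) or a single edge in $X_2$ (the two adjacent to $a \sim c$), so the factoring condition is met. A short direct check shows that $H \circ \partial_{2,2} = h \circ \partial_{1,1} \circ s$, confirming this is a net of $h$.

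With these cases dispatched, the only remaining possibilities for $\image(h)$ are a $3$-cycle or a $4$-cycle, and these are precisely the cases placed in the hypothesis of \cref{thm:van-kampen-refined}. Thus every map $h \from I_1 \gtimes I_1 \to X$ admits a net satisfying condition (N), and \cref{thm:van-kampen} applies to yield the desired pushout in $\Gpd$. I expect the main obstacle to be purely bookkeeping in the pair-of-edges case: checking that the net $H$ I write down and the shrinking map $s$ together satisfy the commutativity condition in the definition of a net, i.e., that reading the boundary $\partial_{2,2}$ of $H$ reproduces $h \circ \partial_{1,1}$ up to reparametrization. Once this is verified the argument closes cleanly.
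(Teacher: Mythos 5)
Your proposal is correct and follows essentially the same route as the paper: the paper's (implicit) proof is exactly the discussion preceding the theorem, which runs through the same case analysis of the possible images of $h \from I_{1} \gtimes I_{1} \to X$, notes that single-vertex and single-edge images factor through $X_{1}$ or $X_{2}$ (so the trivial net works), exhibits the same explicit $2 \gtimes 2$ net in the mixed pair-of-edges case, and then invokes \cref{thm:van-kampen}. No gaps; your bookkeeping check of the net condition in the pair-of-edges case is precisely what the paper's figure encodes.
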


\begin{theorem}[Seifert--van Kampen Theorem for Fundamental Group]\label{thm:van-kampen-groups}
    Consider a pushout square of pointed connected graphs as follows:
    \[
    \begin{tikzcd}
        X_{0} \arrow[r] \arrow[d] \arrow[dr,phantom,"\ulcorner" description, very near end] &
        X_{1} \arrow[d] \\
        X_{2} \arrow[r] &
        X
    \end{tikzcd}
    \]
    If every map $h \from I_{1} \gtimes I_{1} \to X$ whose image in $X$ is a 3- or 4-cycle satisfies the following net resolution condition:
    \begin{equation} \tag{N}
    \text{$h$ admits a net $\left(H,s\right)$ such that each cell $H_{i,j}$ of $H$ factors through $X_{1}$ or $X_{2}$}
    \end{equation}
    then we have the following pushout square in $\Grp$:
    \[
    \pushQED{\qed}
    \begin{tikzcd}
        A_{1}\left(X_{0},x_0\right) \arrow[r] \arrow[d] \arrow[dr,phantom,"\ulcorner" description, very near end] &
        A_{1}\left(X_{1},x_1\right) \arrow[d] \\
        A_{1}\left(X_{2},x_2\right) \arrow[r] &
        A_{1}\left(X,x\right)
    \end{tikzcd}
    \qedhere
    \popQED
    \]
\end{theorem}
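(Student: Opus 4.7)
The plan is to deduce this from the groupoid version, \cref{thm:van-kampen-refined}, by passing from connected pointed groupoids to groups via the equivalence furnished by \cref{A1-v-Pi1-connected}.

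First, I would invoke \cref{thm:van-kampen-refined} on the given pushout square, which yields the pushout square
\[
\begin{tikzcd}
\Pi_{1}X_{0} \arrow[r] \arrow[d] \arrow[dr,phantom,"\ulcorner" description, very near end] & \Pi_{1}X_{1} \arrow[d] \\
\Pi_{1}X_{2} \arrow[r] & \Pi_{1}X
\end{tikzcd}
\]
in $\Gpd$. Since each of $X_{0}, X_{1}, X_{2}$ is connected by hypothesis, and since $X$ is connected as the pushout of connected graphs along a connected source, \cref{A1-v-Pi1-connected} provides equivalences $\Pi_{1}X_{i} \simeq B A_{1}(X_{i}, x_{i})$ for $i = 0, 1, 2$, and $\Pi_{1}X \simeq B A_{1}(X, x)$, where $B$ denotes the one-object delooping of a group as a groupoid.

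Next, I would verify the universal property of the pushout in $\Grp$ directly. Given a group $H$ together with homomorphisms $\psi_{1} \from A_{1}(X_{1}, x_{1}) \to H$ and $\psi_{2} \from A_{1}(X_{2}, x_{2}) \to H$ agreeing on $A_{1}(X_{0}, x_{0})$, the deloopings $B\psi_{i} \from B A_{1}(X_{i}, x_{i}) \to BH$ can be composed with the equivalences from \cref{A1-v-Pi1-connected} to produce functors $\Pi_{1}X_{i} \to BH$ which are compatible upon pullback along $\Pi_{1}X_{0} \to \Pi_{1}X_{i}$. Applying the universal property of the pushout in $\Gpd$ then yields a functor $\Pi_{1}X \to BH$, which under the equivalence $\Pi_{1}X \simeq B A_{1}(X, x)$ restricts on endomorphisms at $x$ to the desired homomorphism $A_{1}(X, x) \to H$; uniqueness follows from uniqueness of the extension in $\Gpd$.

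The main obstacle is a coherence issue: the equivalences $\Pi_{1}X_{i} \simeq B A_{1}(X_{i}, x_{i})$ depend on a choice of path in each $X_{i}$ from $x_{i}$ to every other vertex, and one must arrange these choices compatibly across the square so that the induced functors $\Pi_{1}X_{i} \to BH$ agree strictly (not merely up to natural isomorphism) on $\Pi_{1}X_{0}$. A cleaner route that sidesteps this bookkeeping is to observe that the object set of a pushout of groupoids is the pushout of underlying object sets in $\Set$, so the pushout in $\Gpd$ of the one-object groupoids $B A_{1}(X_{1}, x_{1}) \leftarrow B A_{1}(X_{0}, x_{0}) \to B A_{1}(X_{2}, x_{2})$ is again a one-object groupoid, whose unique automorphism group is identified via the universal property with the amalgamated free product $A_{1}(X_{1}, x_{1}) *_{A_{1}(X_{0}, x_{0})} A_{1}(X_{2}, x_{2})$. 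Transporting the groupoid pushout across the equivalences of \cref{A1-v-Pi1-connected} then identifies this amalgamated free product with $A_{1}(X, x)$, which yields the claim.
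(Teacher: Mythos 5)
The reduction to \cref{thm:van-kampen-refined} is the right starting point, but the passage from the groupoid pushout to the group pushout is exactly where your argument has a genuine gap---and you put your finger on it yourself without discharging it. The square produced by \cref{thm:van-kampen-refined} is a \emph{strict} ($1$-categorical) pushout in $\Gpd$, and strict pushouts in $\Gpd$ are not invariant under equivalence of groupoids. So the final move of your ``cleaner route''---replacing the corners $\Pi_{1}X_{i}$ by the equivalent one-object groupoids $B A_{1}(X_{i},x_{i})$ and ``transporting the groupoid pushout across the equivalences'' of \cref{A1-v-Pi1-connected}---is unjustified: the pushout of the replaced diagram need not be equivalent to the pushout of the original one. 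Your first route founders on the same point: composing $B\psi_{i}$ with chosen equivalences yields functors $\Pi_{1}X_{i} \to BH$ that agree on $\Pi_{1}X_{0}$ only up to natural isomorphism, whereas the universal property of the strict pushout requires strict equality. The ``coherence issue'' you name is therefore not bookkeeping; it is the entire mathematical content of the deduction, and neither route resolves it.

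Moreover, no purely formal transport of this kind can work, because the statement being transported is false for general strict pushouts of connected groupoids: the vertex group of a strict pushout need not be the amalgamated product of the vertex groups. Concretely, take $X_{0}=I_{5}$, $X_{1}=C_{5}$, $X_{2}=I_{0}$, with $I_{5}\to C_{5}$ given by $i \mapsto i \ (\mathrm{mod}\ 5)$ and $I_{5}\to I_{0}$ the collapse. The pushout graph is $I_{0}$, and one checks directly that $\Pi_{1}I_{0}$ (the terminal groupoid) is the strict pushout of $\Pi_{1}C_{5} \leftarrow \Pi_{1}I_{5} \to \Pi_{1}I_{0}$ in $\Gpd$; yet $A_{1}(C_{5})\ast_{A_{1}(I_{5})}A_{1}(I_{0}) \cong \mathbb{Z}$ while $A_{1}(I_{0})$ is trivial. (Here the map $I_{5}\to C_{5}$ is not injective on vertices; this is precisely the situation in which your ``arrange the choices of paths compatibly'' step can fail.) So any correct proof of the group-level statement must input something beyond connectivity and the groupoid pushout---for instance a compatible choice of base-paths/retractions, which is available when the vertex maps are injective (as in all of the paper's applications), or a direct universal-property argument at the level of groups in the style of the proof of \cref{thm:van-kampen}. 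As written, your proposal supplies neither, and the example above shows the missing step is not merely a formality.
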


\begin{example}
    For any $m \in \mathbb{N}$ and $n \geq 5$, the fundamental group of the graph $\bigvee_{i=1}^{m}{C_{n}}$ is isomorphic to the free group of rank $m$. We can prove this by induction on $m$ and by applying \cref{thm:van-kampen-groups} to the following pushout diagram, since $\bigvee_{i=1}^{m}{C_{n}}$ has no 3- or 4-cycles:
    \[
    \begin{tikzcd}
        I_{0} \arrow[r,"0"] \arrow[d,"0"'] \arrow[dr,phantom,"\ulcorner" description, very near end] & \bigvee_{i=1}^{m-1}{C_{n}} \arrow[d] \\
        C_{n} \arrow[r] & \bigvee_{i=1}^{m}{C_{n}}
    \end{tikzcd}
    \]    
\end{example}

\begin{example} \label{ex:S2-simply-connected}
    Note that the boundary map $\partial_{m,n} \from I_{2m+2n} \to I_{m} \gtimes I_{n}$ satisfies $\partial_{m,n}\left(0\right) = \partial_{m,n}\left(2m+2n\right)$ and hence descends to a \emph{boundary inclusion} $\partial_{m,n} \from C_{2m+2n} \hookrightarrow I_{m} \gtimes I_{n}$.
    Let $Y_{m,n}$ denote the graph given by the following pushout:
    \[
    \begin{tikzcd}
        C_{2m+2n} \arrow[r,"\partial_{m,n}",hook] \arrow[d,"\partial_{m,n}"',hook] \arrow[dr,phantom,"\ulcorner" description, very near end] & I_{m} \gtimes I_{n} \arrow[d] \\
        I_{m} \gtimes I_{n} \arrow[r] & Y_{m,n}
    \end{tikzcd}
    \]
    For $m, n \geq 3$, the graph $Y_{m,n}$ has precisely four 4-cycles that are not contained entirely in either copy of $I_{m} \gtimes I_{n}$.
    However, each of these 4-cycles satisfies the net resolution condition (\ref{vK-cond}) via a nontrivial net.
    The figure below showcases a nontrivial net for one of these 4-cycles in $Y_{3,3}$.
    \begin{figure}[H]
    \centering
    \begin{tikzpicture}[colorstyle/.style={circle, draw=black, fill=black, thick, inner sep=0pt, minimum size=1 mm, outer sep=0pt},scale=1.5]
    \node (0) at (0,0) [colorstyle, label = below left: {$6$}] {};
    \node (1) at (1,0) [colorstyle, label = below: {$7$}] {};
    \node (2) at (2,0) [colorstyle, label = below: {$8$}] {};
    \node (3) at (3,0) [colorstyle, label = below right: {$9$}] {};
    \node (4) at (0,1) [colorstyle, label = left: {$5$}] {};
    \node (5) at (1,1) [colorstyle, label = above right: {$c$}] {};
    \node (6) at (2,1) [colorstyle, label = above right: {$d$}] {};
    \node (7) at (3,1) [colorstyle, label = right: {$10$}] {};
    \node (8) at (0,2) [colorstyle, label = left: {$4$}] {};
    \node (9) at (1,2) [colorstyle, label = above right: {$b$}] {};
    \node (10) at (2,2) [colorstyle, label = above right: {$a$}] {};
    \node (11) at (3,2) [colorstyle, label = right: {$11$}] {};
    \node (12) at (0,3) [colorstyle, label = above left: {$3$}] {};
    \node (13) at (1,3) [colorstyle, label = above: {$2$}] {};
    \node (14) at (2,3) [colorstyle, label = above: {$1$}] {};
    \node (15) at (3,3) [colorstyle, label = above right: {$0$}] {};

    \draw [thick] (0) -- (3);
    \draw [thick] (4) -- (7);
    \draw [thick] (8) -- (11);
    \draw [thick] (12) -- (15);
    \draw [thick] (0) -- (12);
    \draw [thick] (1) -- (13);
    \draw [thick] (2) -- (14);
    \draw [thick] (3) -- (15);
    \draw [very thick,red] (10) -- (11);
    \draw [very thick,red] (10) -- (14);

    \node (0') at (5,0) [colorstyle, label = below left: {$6$}] {};
    \node (1') at (6,0) [colorstyle, label = below: {$7$}] {};
    \node (2') at (7,0) [colorstyle, label = below: {$8$}] {};
    \node (3') at (8,0) [colorstyle, label = below right: {$9$}] {};
    \node (4') at (5,1) [colorstyle, label = left: {$5$}] {};
    \node (5') at (6,1) [colorstyle, label = above right: {$c'$}] {};
    \node (6') at (7,1) [colorstyle, label = above right: {$d'$}] {};
    \node (7') at (8,1) [colorstyle, label = right: {$10$}] {};
    \node (8') at (5,2) [colorstyle, label = left: {$4$}] {};
    \node (9') at (6,2) [colorstyle, label = above right: {$b'$}] {};
    \node (10') at (7,2) [colorstyle, label = above right: {$a'$}] {};
    \node (11') at (8,2) [colorstyle, label = right: {$11$}] {};
    \node (12') at (5,3) [colorstyle, label = above left: {$3$}] {};
    \node (13') at (6,3) [colorstyle, label = above: {$2$}] {};
    \node (14') at (7,3) [colorstyle, label = above: {$1$}] {};
    \node (15') at (8,3) [colorstyle, label = above right: {$0$}] {};

    \draw [thick] (0') -- (3');
    \draw [thick] (4') -- (7');
    \draw [thick] (8') -- (11');
    \draw [thick] (12') -- (15');
    \draw [thick] (0') -- (12');
    \draw [thick] (1') -- (13');
    \draw [thick] (2') -- (14');
    \draw [thick] (3') -- (15');
    \draw [very thick,red] (10') -- (11');
    \draw [very thick,red] (10') -- (14');
    \node at (4,-1) [anchor = south]{The graph $Y_{3,3}$ - each pair of identically labelled vertices is identified.};
    \end{tikzpicture}
    \end{figure}
    \begin{figure}[H]
    \centering
    \begin{tikzpicture}[colorstyle/.style={circle, draw=black, fill=black, thick, inner sep=0pt, minimum size=1 mm, outer sep=0pt},scale=1.5]
    \node (1) at (0,2) [colorstyle, label = above left: {$1$}] {};
    \node (2) at (1,2) [colorstyle, label = above: {$1$}] {};
    \node (3) at (2,2) [colorstyle, label = above right: {$a$}] {};
    \node (4) at (0,1) [colorstyle, label = left: {$1$}] {};
    \node (5) at (1,1) [colorstyle, label = above right: {$0$}] {};
    \node (6) at (2,1) [colorstyle, label = right: {$11$}] {};
    \node (7) at (0,0) [colorstyle, label = below left: {$a'$}] {};
    \node (8) at (1,0) [colorstyle, label = below: {$11$}] {};
    \node (9) at (2,0) [colorstyle, label = below right: {$11$}] {};
    \draw [thick] (1) -- (3);
    \draw [thick] (4) -- (6);
    \draw [thick] (7) -- (9);
    \draw [thick] (1) -- (7);
    \draw [thick] (2) -- (8);
    \draw [thick] (3) -- (9);
    \node at (1,-1) [anchor = south]{a net for the highlighted 4-cycle in $Y_{3,3}$};
    \end{tikzpicture}
    \end{figure}

    Thus, we can apply \cref{thm:van-kampen-groups} to conclude that $Y_{m,n}$ is simply connected for $m, n \geq 3$.
\end{example}

\section{Application: graphs with prescribed fundamental group} \label{sec:application}

In this final section, we present a construction that applies the Seifert--van Kampen \cref{thm:van-kampen-groups} to produce an example of a graph with a prescribed fundamental group.
After preliminary definitions of cones and disks, we give the construction in \cref{def:graph-with-A1} and prove its correctness in \cref{thm:prescribed-A1-correct}.

We note that the problem can also be approached topologically by taking a sufficiently fine quadrangulation of a manifold with a known fundamental group, cf.~\cite[p.~123]{barcelo-kramer-laubenbacher-weaver}, although, unlike ours, this process is not algorithmic.
For example, it is unclear what quadrangulation would be considered sufficiently fine.

\begin{definition}
The \emph{cone $\mathsf{Cone}_{n}\left(X\right)$} of height $n \in \mathbb{N}$ on a graph $X$ is given by the following pushout:
\[
\begin{tikzcd}
    X \gtimes I_{0} \arrow[r,"\text{id} \gtimes 0"] \arrow[d,"!"'] \arrow[dr,phantom,"\ulcorner" description, very near end] & X \gtimes I_{n} \arrow[d] \\
    I_{0} \arrow[r] & \mathsf{Cone}_{n}\left(X\right)
\end{tikzcd}
\]
Let $\ast$ denote the identification class of $\left(x,0\right)$ in $\mathsf{Cone}_{n}\left(X\right)$ for all $x \in X$.
\end{definition}

\begin{proposition}
    Given any graph $X$ and $n \in \mathbb{N}$, the cone $\mathsf{Cone}_{n}\left(X\right)$ is contractible.
\end{proposition}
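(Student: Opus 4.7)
The plan is to write down an explicit $A$-homotopy of length $n$ from the identity on $\mathsf{Cone}_{n}(X)$ to the constant map at $\ast$. The idea is to slide everything downward along the second coordinate of $X \gtimes I_{n}$ toward the collapsed level $X \gtimes \{0\}$, which becomes the cone point in the quotient.

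Concretely, I would define $H \from X \gtimes I_{n} \gtimes I_{n} \to X \gtimes I_{n}$ by
\[
    H(x, i, j) = \left(x, \max(i-j, 0)\right).
\]
First I would check that $H$ is a graph map by a short case analysis on the three types of edges in the box product (edge in $X$, edge in the first $I_{n}$, edge in the second $I_{n}$); in each case the output coordinates either agree or change by exactly $1$. By construction $H(-,-,0) = \text{id}_{X \gtimes I_{n}}$ and $H(-,-,n) = (\text{id}_X \gtimes 0) \circ \pi_X$ since $\max(i-n,0)=0$ for all $i \in \V{I_n}$.

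Next, I would observe that $H$ collapses $X \gtimes \{0\} \gtimes I_{n}$ into $X \gtimes \{0\}$, since $H(x,0,j) = (x,0)$. Writing $q \from X \gtimes I_{n} \to \mathsf{Cone}_{n}(X)$ for the quotient map from the defining pushout, the composite $q \circ H$ is therefore constant at $\ast$ on $X \gtimes \{0\} \gtimes I_{n}$. Because $-\gtimes I_{n}$ is a left adjoint (\cref{prop:graph-closed-monoidal}), it preserves the pushout defining $\mathsf{Cone}_{n}(X)$, so $q \gtimes \id[I_n]$ is itself a pushout (and hence a quotient). Thus $q \circ H$ factors uniquely through $q \gtimes \id[I_n]$, yielding a graph map
\[
    \tilde{H} \from \mathsf{Cone}_{n}(X) \gtimes I_{n} \to \mathsf{Cone}_{n}(X).
\]
Evaluating at $j=0$ and $j=n$ gives $\tilde{H}(-,0) = \id[\mathsf{Cone}_{n}(X)]$ and $\tilde{H}(-,n) = c_{\ast}$, so $\tilde{H}$ is the required $A$-homotopy. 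The only slightly technical point is verifying the factorization through the pushout, which reduces to the left-adjointness of the box product; everything else is a direct calculation.
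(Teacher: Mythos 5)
Your proposal is correct and follows essentially the same route as the paper: an explicit homotopy on $X \gtimes I_{n} \gtimes I_{n}$ that fixes the collapsed level $X \gtimes \{0\}$ and pushes everything to it, which then descends to $\mathsf{Cone}_{n}(X) \gtimes I_{n}$ because $- \gtimes I_{n}$ preserves the defining pushout. The only differences are cosmetic: the paper uses the formula $\min(i, n-j)$ instead of your $\max(i-j,0)$, and it asserts the descent without spelling out the left-adjointness argument that you make explicit.
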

\begin{proof}
    Consider the graph map $\tilde{H} \from X \gtimes I_{n} \gtimes I_{n} \to \mathsf{Cone}_{n}\left(X\right)$ given by the formula $\tilde{H}\left(x,i,j\right) = \left[x,\min{\left(i,n-j\right)}\right]$.
    Since $\tilde{H}\left(x,0,j\right) = \ast$ for all $x \in X$ and $j \in I_{n}$, it descends to a map  $H \from \mathsf{Cone}_{n}\left(X\right) \gtimes I_{n} \to \mathsf{Cone}_{n}\left(X\right)$.
    Note that $H\left(-,-,0\right) = \text{id}_{\mathsf{Cone}_{n}\left(X\right)}$ and $H\left(-,-,n\right) = c_{\ast}$.
    Thus, the graph $\mathsf{Cone}_{n}\left(X\right)$ is contractible.
\end{proof}

\begin{definition}
    For $m, n \in \mathbb{N}$, the \emph{$(m,n)-$disk} $D_{m,n}$, with $m$ spokes and of radius $n$, is given by $\mathsf{Cone}_{n}\left(C_{m}\right)$.
    The \emph{boundary of the $\left(m,n\right)$-disk} is the map $\partial_{m,n} \from C_{m} \to D_{m,n}$ given by $ \partial_{m,n}\left(i\right) = \left(i,n\right)$.
\end{definition}

\begin{figure}[H]
\centering
\begin{minipage}{0.45\textwidth}
\centering
\begin{tikzpicture}[colorstyle/.style={circle, draw=black, fill=black, thick, inner sep=0pt, minimum size=1.2 mm, outer sep=0pt},scale=1.5]
    \draw [thick] (0,0) circle (1.2cm);
    \node (1) at (0,2.4/2) [colorstyle] {};
    \node (2) at (2.28254/2, 0.741642/2) [colorstyle] {};
    \node (3) at (1.41068/2, -1.94164/2) [colorstyle] {};
    \node (4) at (-1.41068/2, -1.94164/2) [colorstyle] {};
    \node (5) at (-2.28254/2, 0.741642/2) [colorstyle] {};
    
    \node at (0,-2) [anchor = south]{$C_{5}$};
\end{tikzpicture}
\end{minipage}
\begin{minipage}{0.45\textwidth}
\centering
\begin{tikzpicture}[colorstyle/.style={circle, draw=black, fill=black, thick, inner sep=0pt, minimum size=1.5 mm, outer sep=0pt},scale=1.5]
    \node (0) at (0,0) [colorstyle] {};
    
    \draw [thick] (0,0) circle (0.4cm);
    \node at (0,0.8/2) [colorstyle] {};
    \node at (0.760846/2, 0.247214/2) [colorstyle] {};
    \node at (0.470228/2, -0.647214/2) [colorstyle] {};
    \node at (-0.470228/2, -0.647214/2) [colorstyle] {};
    \node at (-0.760846/2, 0.247214/2) [colorstyle] {};
    
    \draw  [thick] (0,0) circle (0.8cm);
    \node at (0,1.6/2) [colorstyle] {};
    \node at (1.52169/2, 0.494428/2) [colorstyle] {};
    \node at (0.940456/2, -1.29443/2) [colorstyle] {};
    \node at (-0.940456/2, -1.29443/2) [colorstyle] {};
    \node at (-1.52169/2, 0.494428/2) [colorstyle] {};
    
    \draw [thick] (0,0) circle (1.2cm);
    \node (1) at (0,2.4/2) [colorstyle] {};
    \node (2) at (2.28254/2, 0.741642/2) [colorstyle] {};
    \node (3) at (1.41068/2, -1.94164/2) [colorstyle] {};
    \node (4) at (-1.41068/2, -1.94164/2) [colorstyle] {};
    \node (5) at (-2.28254/2, 0.741642/2) [colorstyle] {};

    \draw [thick] (0) -- (1);
    \draw [thick] (0) -- (2);
    \draw [thick] (0) -- (3);
    \draw [thick] (0) -- (4);
    \draw [thick] (0) -- (5);
    
    \node at (0,-2) [anchor = south]{$D_{5,3}$};
\end{tikzpicture}
\end{minipage}
\end{figure}

\begin{construction} \label{def:graph-with-A1}
    Let $F_{S}$ be the free group generated by a set $S$.
    Given any word $r = s_{1}^{d_{1}} \cdots s_{k}^{d_{k}} \in F_{S}$, with $s_{i} \neq s_{i+1}$ for $1 \leq i \leq k-1$, we can define its \emph{degree} as follows:
    \[
        \mathsf{deg}\left(r\right) = \left\lvert d_{1} \right\rvert + \cdots + \left\lvert d_{k} \right\rvert.
    \]
    We can then define a map
    \[
        \omega_{r} = C_{5 \cdot \mathsf{deg}\left(r\right)} \longrightarrow \bigvee_{s \in S}{C_{5}}
    \]
    corresponding the word $r$ as follows: first, wrap $d_{1}$ times around the $C_{5}$ corresponding to $s_{1} \in S$ (clockwise if $d_{1} > 0$ and counter-clockwise if $d_{1} < 0$), then $d_{2}$ times around the $C_{5}$ corresponding to $s_{2} \in S$, and so on.
    
    Given any group $G$, with a presentation
    \[
        G = \left\langle S \ \middle\vert \ R\right\rangle
    \]
    let the graph $X_{S, R}$ be given by following pushout:
    \[
    \begin{tikzcd}[row sep = large, column sep = large]
        \coprod_{r \in R}{C_{5 \cdot \mathsf{deg}\left(r\right)}} \arrow[r,"{(\omega_{r})_{r \in R}}"] \arrow[d,"\coprod_{r \in R}{\partial_{5 \cdot \mathsf{deg}\left(r\right),3}}"'] \arrow[dr, phantom, "\ulcorner" description, very near end] & \bigvee_{s \in S}{C_{5}} \arrow[d] \\
        \coprod_{r \in R}{D_{5 \cdot \mathsf{deg}\left(r\right),3}} \arrow[r] & X_{S, R}
    \end{tikzcd}
    \]
\end{construction}

\begin{theorem} \label{thm:prescribed-A1-correct}
    Given any group $G$, with a presentation
    \[
        G = \left\langle S \ \middle\vert \ R\right\rangle = \left\langle s_{1}, \ldots, s_{m} \ \middle\vert \ r_{1}, \ldots, r_{n}\right\rangle,
    \]
    we have:
    \[
        A_{1}\left(X_{S, R}, x\right) \cong G
    \]
    for any $x \in X_{S, R}$.
\end{theorem}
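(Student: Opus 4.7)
The plan is to iterate \cref{thm:van-kampen-groups}, attaching one relation disk at a time. Let $Y_0 = \bigvee_{i=1}^m C_5$ and, for $j = 1, \ldots, n$, define $Y_j$ by the pushout
\[
\begin{tikzcd}[column sep = large]
C_{5 \cdot \mathsf{deg}\left(r_j\right)} \arrow[r,"\varepsilon_{r_j}"] \arrow[d,"\partial"'] \arrow[dr,phantom,"\ulcorner" description, very near end] & Y_{j-1} \arrow[d] \\
D_{5 \cdot \mathsf{deg}\left(r_j\right),3} \arrow[r] & Y_j
\end{tikzcd}
\]
Since pushouts commute with coproducts, $Y_n \cong X_{\left\langle S \middle\vert R\right\rangle}$, and each $Y_j$ is pointed and connected by induction (the wedge, each $C_{5k}$, and each cone $D_{5k, 3}$ being connected). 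I will prove inductively that $A_1\left(Y_j, \ast\right) \cong F_m \big/ \langle\langle r_1, \ldots, r_j \rangle\rangle$.

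\textbf{Computational ingredients.} For the base case, iterating the computation of $A_1\bigl(\bigvee C_n\bigr)$ given in the first example after \cref{thm:van-kampen-groups} yields $A_1\left(Y_0, \ast\right) \cong F_m$, with the $i$-th free generator represented by the canonical loop in the $i$-th copy of $C_5$; this in turn uses $A_1\left(C_5, 0\right) \cong \mathbb{Z}$ from the covering-theoretic example at the end of \cref{sec:covering}. The inductive step needs three facts: (i) $D_{5k, 3}$ is contractible, so $A_1\left(D_{5k, 3}\right) = 0$; (ii) $A_1\left(C_{5k}, 0\right) \cong \mathbb{Z}$ for $k \geq 1$; and (iii) the map $A_1\left(C_{5 \cdot \mathsf{deg}\left(r_j\right)}\right) \to A_1\left(Y_{j-1}\right)$ induced by $\varepsilon_{r_j}$ sends the canonical generator to the class of $r_j$. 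Claim (iii) is immediate from the construction of $\varepsilon_g$: for $g = s_{i_1}^{d_1}\cdots s_{i_k}^{d_k}$, the map $\varepsilon_g$ wraps $\lvert d_l \rvert$ times around the $i_l$-th copy of $C_5$ in the sign-determined direction, which is precisely the loop realizing $g$ in $F_m$. Assuming the net resolution condition (N) holds at each step, \cref{thm:van-kampen-groups} yields
\[
A_1\left(Y_j, \ast\right) \cong A_1\left(Y_{j-1}, \ast\right) *_{\mathbb{Z}} 1 \cong A_1\left(Y_{j-1}, \ast\right) \big/ \langle\langle r_j \rangle\rangle \text{,}
\]
and $n$-fold iteration produces the target presentation $G \cong F_m / \langle\langle r_1, \ldots, r_n \rangle\rangle$.

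\textbf{Main obstacle.} The principal difficulty is verifying (N) at each of the $n$ pushouts. By the table of shapes preceding \cref{thm:van-kampen-refined}, it suffices to check (N) for maps $h \from I_1 \gtimes I_1 \to Y_j$ whose image in $Y_j$ is a 3-cycle or a 4-cycle. My plan is to show that every such cycle factors entirely through $Y_{j-1}$ or through $D_{5 \cdot \mathsf{deg}\left(r_j\right), 3}$, so that the trivial net $H = h$ suffices. The argument rests on three observations: (a) $Y_0 = \bigvee C_5$ has no 3- or 4-cycles, since $C_5$ has girth $5$ and wedge neighbors belonging to different summands lie at mutual distance $4$; (b) for $m \geq 5$ the disk $D_{m, 3}$ has 3-cycles only at its apex $\ast$ (of the form $\ast, (v, 1), (v', 1)$ with $v \sim v'$) and 4-cycles only among its cone squares $(v, i), (v', i), (v', i+1), (v, i+1)$ and the four-paths $\ast, (v, 1), (v', 1), (v'', 1)$ at the apex; and (c) $\varepsilon_{r_j}$ sends adjacent boundary vertices of $C_{5 \cdot \mathsf{deg}(r_j)}$ to \emph{distinct} adjacent wedge vertices, while distinct boundary vertices identified with the same wedge vertex lie at distance at least $5$ in $C_{5 \cdot \mathsf{deg}(r_j)}$ (since an entire syllable $s_{i_l}^{d_l}$ of $r_j$ separates consecutive visits of the boundary path to the wedge point). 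A short case analysis on whether the cycle's vertices lie on the shared level-$3$ boundary or in the disk interior, exploiting that a level-$2$ vertex $\left(v, 2\right)$ has its unique "shared" neighbor at $\left(v, 3\right)$, rules out any new short cycle arising from the gluing and completes the verification of (N).
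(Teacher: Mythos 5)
Your strategy is essentially the paper's: verify the net resolution condition (N) by showing that every 3- or 4-cycle factors through one of the attached disks (or through the previously built piece), so that trivial nets suffice, and then let \cref{thm:van-kampen-groups} do the algebra. The paper's proof is a one-line version of exactly this, applied once to the defining pushout of \cref{def:graph-with-A1}; you instead attach one relation disk at a time and induct over the relators. Your organization buys two things the paper leaves implicit: every corner of each of your pushouts is pointed and connected, so the group-level statement of \cref{thm:van-kampen-groups} applies literally (the paper's single pushout has $\coprod_j C_{5\cdot\mathsf{deg}(r_j)}$ and $\coprod_j D_{5\cdot\mathsf{deg}(r_j),3}$ as corners, which are disconnected once $n\geq 2$), and the identification $A_1(Y_j)\cong A_1(Y_{j-1})/\langle\langle r_j\rangle\rangle$ makes the group-theoretic bookkeeping (base case $F_m$, image of the generator under $A_1(\varepsilon_{r_j})$) explicit rather than tacit. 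One caveat, which you share with the paper's "key observation": your claim (c), that distinct boundary vertices with the same image under $\varepsilon_{r_j}$ lie at distance at least $5$, is only true when $r_j$ is (cyclically) reduced. For instance, for $r_j = s_1 s_2 s_1^{-1}$ the vertices at positions $1$ and $5\cdot\mathsf{deg}(r_j)-1$ of the rim are identified although they are at distance $2$, and the resulting $4$-cycle through two level-$2$ vertices of the disk factors through neither piece, so the trivial net does not verify (N) there; one must either cyclically reduce the relators first (harmless for the group, but it changes the graph the theorem is stated for) or exhibit a nontrivial net as in \cref{ex:S2-simply-connected}. Since the paper's proof asserts the same factorization without addressing this case, this is a shared gap rather than a defect specific to your route, but it is worth closing if you write the argument out in full.
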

\begin{proof}
    The key observation is that every 3-cycle or 4-cycle in $X_{S, R}$ necessarily factors through one of the disks $D_{5 \cdot \mathsf{deg}\left(r_{j}\right),3}$.
    Thus, we can apply \cref{thm:van-kampen-groups} to get the required result.
\end{proof}

\begin{example}
    Taking $G = \mathbb{Z}/2\mathbb{Z}$, with the presentation $\left\langle a \ \middle\vert \ a^{2} \right\rangle$, the graph $X_{\{a\},\{a^{2}\}}$ is given by the following pushout:
    \[
    \begin{tikzcd}
        C_{10} \arrow[r,"\omega_{a^{2}}"] \arrow[d,"\partial_{10,3}"'] \arrow[dr, phantom, "\ulcorner" description, very near end] & C_{5} \arrow[d] \\
        D_{10,3} \arrow[r] & X_{\{a\},\{a^{2}\}}
    \end{tikzcd}
    \]
    We can visualize this graph as follows, with each pair of identically labelled vertices identified:
\begin{figure}[H]
\centering
\begin{tikzpicture}[colorstyle/.style={circle, draw=black, fill=black, thick, inner sep=0pt, minimum size=1.2 mm, outer sep=0pt},scale=1.5]
    \node (0) at (0,0) [colorstyle] {};
    
    \draw [thick] (0,0) circle (0.4cm);
    \node at (0,0.8/2) [colorstyle] {};
    \node at (0,-0.8/2) [colorstyle] {};
    \node at (0.760846/2, 0.247214/2) [colorstyle] {};
    \node at (0.760846/2, -0.247214/2) [colorstyle] {};
    \node at (0.470228/2, -0.647214/2) [colorstyle] {};
    \node at (0.470228/2, 0.647214/2) [colorstyle] {};
    \node at (-0.470228/2, -0.647214/2) [colorstyle] {};
    \node at (-0.470228/2, 0.647214/2) [colorstyle] {};
    \node at (-0.760846/2, 0.247214/2) [colorstyle] {};
    \node at (-0.760846/2, -0.247214/2) [colorstyle] {};
    
    \draw  [thick] (0,0) circle (0.8cm);
    \node at (0,1.6/2) [colorstyle] {};
    \node at (1.52169/2, 0.494428/2) [colorstyle] {};
    \node at (0.940456/2, -1.29443/2) [colorstyle] {};
    \node at (-0.940456/2, -1.29443/2) [colorstyle] {};
    \node at (-1.52169/2, 0.494428/2) [colorstyle] {};
    \node at (0,-1.6/2) [colorstyle] {};
    \node at (1.52169/2, -0.494428/2) [colorstyle] {};
    \node at (0.940456/2, 1.29443/2) [colorstyle] {};
    \node at (-0.940456/2, 1.29443/2) [colorstyle] {};
    \node at (-1.52169/2, -0.494428/2) [colorstyle] {};
    
    \draw [thick] (0,0) circle (1.2cm);
    \node (1) at (0,2.4/2) [colorstyle, label = above: {$0$}] {};
    \node (2) at (2.28254/2, 0.741642/2) [colorstyle, label = above right: {$2$}] {};
    \node (3) at (1.41068/2, -1.94164/2) [colorstyle, label = below right: {$4$}] {};
    \node (4) at (-1.41068/2, -1.94164/2) [colorstyle, label = below left: {$1$}] {};
    \node (5) at (-2.28254/2, 0.741642/2) [colorstyle, label = above left: {$3$}] {};
    \node (6) at (0,-2.4/2) [colorstyle, label = below: {$0$}] {};
    \node (7) at (2.28254/2, -0.741642/2) [colorstyle, label = below right: {$3$}] {};
    \node (8) at (1.41068/2, 1.94164/2) [colorstyle, label = above right: {$1$}] {};
    \node (9) at (-1.41068/2, 1.94164/2) [colorstyle, label = above left: {$4$}] {};
    \node (10) at (-2.28254/2, -0.741642/2) [colorstyle, label = below left: {$2$}] {};

    \draw [thick] (1) -- (6);
    \draw [thick] (2) -- (10);
    \draw [thick] (3) -- (9);
    \draw [thick] (4) -- (8);
    \draw [thick] (5) -- (7);
    
    \node at (0,-2) [anchor = south]{The graph $X_{\{a\},\{a^{2}\}}$ with fundamental group $\mathbb{Z}/2\mathbb{Z}$};
\end{tikzpicture}
\end{figure}
\end{example}

We conclude the paper with three open questions:
\begin{enumerate}
    \item Can the construction of a graph $X_G$ be modified to a construction of Eilenberg--Mac Lane graphs, i.e., graphs with prescribed homotopy group in a fixed degree and other homotopy groups trivial?
    \item Can such Eilenberg--Mac Lane graphs be used to define graph cohomology in a way that is analogous to the classical theory (i.e., via Brown's representability theorem)?
    \item Is the category of graphs localized at the class of $1$-equivalences (i.e., graph maps inducing isomorphisms on $A_0$ and $A_1$) equivalent, via the fundamental groupoid functor to the category of groupoids? This is known for topological spaces.
\end{enumerate}

\bibliographystyle{amsalphaurlmod}
\bibliography{disc-fund-gpd}

\end{document}